\patchcmd{\@setaddresses}{\indent}{\noindent}{}{}
\patchcmd{\@setaddresses}{\indent}{\noindent}{}{}
\patchcmd{\@setaddresses}{\indent}{\noindent}{}{}
\patchcmd{\@setaddresses}{\indent}{\noindent}{}{}
\crefname{part}{\S}{\S\S}
\crefname{chapter}{\S}{\S\S}
\crefname{section}{\S}{\S\S}
\crefname{subsection}{\S}{\S\S}
\newtheorem{theorem}{\sc Theorem}[subsection]
\newtheorem{proposition}[theorem]{\sc Proposition}
\newtheorem{notation}[theorem]{\sc Notation}
\newtheorem{lemma}[theorem]{\sc Lemma}
\newtheorem{corollary}[theorem]{\sc Corollary}
\theoremstyle{definition}
\newtheorem{definition}[theorem]{\sc Definition}
\newtheorem{example}[theorem]{\sc Example}
\theoremstyle{remark}
\newtheorem{remark}[theorem]{\sc Remark}
\newtheorem*{proposition*}{Proposition}
\newenvironment{invisible}{{\noindent\sc \colorbox{yellow}{Invisible:}\;}\color{gray}}{\medskip}
\newcommand{\rd}[1]{{\color{red}{#1}}}
\newcommand{\ps}[1]{{\color{blue}{#1}}}
\def\pulb{\ar@{}[dr]|(0.2){\mbox{\Large{$\lrcorner$}}}}
\newcommand{\Mm}{\mathcal{M}}
\newcommand{\id}{\mathrm{Id}}
\newcommand{\rat}{\mathrm{rat}}
\newcommand{\op}{\mathrm{op}}
\newcommand{\cop}{\mathrm{cop}}
\newcommand{\ev}{\mathrm{ev}}
\newcommand{\R}{\mathbb R}
\newcommand{\pred}[1]{{^\star #1}}
\DeclareMathOperator{\conv}{\raisebox{1pt}{$\scriptstyle\star$}}
\DeclareMathOperator{\bra}{\raisebox{+1pt}{$\scriptstyle \blacktriangleleft$}} 
\DeclareMathOperator{\bla}{\raisebox{+1pt}{$\scriptstyle \blacktriangleright$}} 
\begin{document}
\allowdisplaybreaks

\title[Integrals for bialgebras]{Integrals for Bialgebras}

\thanks{This paper was written while the authors were members of the ``National Group for Algebraic and Geometric Structures and their Applications'' (GNSAGA-INdAM).
Esta publicación ha sido financiada con fondos propios de la Junta de Andalucía, en el marco de la ayuda DGP\_EMEC\_2023\_00216.
It is partially based upon work from COST Action CaLISTA CA21109 supported by COST (European Cooperation in Science and Technology) \href{www.cost.eu}{www.cost.eu}.
The authors acknowledge partial support by the European Union -NextGenerationEU under NRRP, Mission 4 Component 2 CUP D53D23005960006 - Call PRIN 2022 No.\, 104 of February 2, 2022 of Italian Ministry of University and Research; Project 2022S97PMY \textit{Structures for Quivers, Algebras and Representations (SQUARE)}}

\begin{abstract}
A well-known result by Larson and Sweedler shows that integrals on a Hopf algebra can be obtained by applying the Structure Theorem for Hopf modules to the rational part of its linear dual.
This fact can be rephrased by saying that taking the space of integrals comes from a right adjoint functor from a category of modules to the category of vector spaces.
This observation inspired the categorical approach that we advocate in this work, which yields to a new notion of integrals for bialgebras in the linear setting. Despite the novelty of the construction, it returns the classical definition in the presence of an antipode.
%
%
%
We test this new concept on bialgebras that satisfy at least one of the following properties: being coseparable as regular module coalgebras, having a one-sided antipode, being commutative, being cocommutative, or being finite-dimensional.
One of the main results we obtain in this process is a dual Maschke-type theorem relating coseparability and total integrals.
Remarkably, there are cases in which the space of integrals turns out to be isomorphic to that of the associated Hopf envelope.
In particular, this space results to be one-dimensional for finite-dimensional bialgebras, providing an existence and uniqueness theorem for integrals in the finite-dimensional case.
Furthermore, explicit computations are given for concrete examples including the polynomial bialgebra with one group-like variable, the quantum plane and the coordinate bialgebra of $n$-by-$n$ matrices.

%


\end{abstract}

\keywords{Integrals, bialgebras, pre-rigid objects, Hopf algebras, n-Hopf algebras, quantum plane, 2-by-2 matrices}

\author{Alessandro Ardizzoni}
\address{%
\parbox[b]{\linewidth}{University of Turin, Department of Mathematics ``G. Peano'', via
Carlo Alberto 10, I-10123 Torino, Italy}}
\email{alessandro.ardizzoni@unito.it}
\urladdr{\url{www.sites.google.com/site/aleardizzonihome}}

\author{Claudia Menini}
\address{%
\parbox[b]{\linewidth}{University of Ferrara, Department of Mathematics and Computer Science, Via Machiavelli
30, Ferrara, I-44121, Italy}}
\email{men@unife.it}
\urladdr{\url{https://sites.google.com/a/unife.it/claudia-menini}}

\author{Paolo Saracco}
\address{Universidad de Sevilla, Departamento de \'Algebra, Facultad de Matemáticas, Avda. Reina Mercedes s/n, Apdo. 1160, 41080, Sevilla, Spain.}
\email{psaracco@us.es}
\urladdr{\url{https://sites.google.com/view/paolo-saracco}}

\subjclass[2020]{16T05, 16T10, 18M05} 

\maketitle
\tableofcontents

\section{Introduction}


An extremely familiar notion in Hopf algebra theory is that of an \emph{integral}. If $G$ is a compact topological group, then a Haar integral on $G$ is a linear functional $\int \colon \mathcal{C}(G) \to \R$ which is translation invariant, in the sense that $\int (f \leftharpoonup y) = \int (f)$ for all $f \in \mathcal{C}(G)$ and for all $y \in G$. It follows that the restriction of $\int$ to the Hopf $\R$-algebra $\mathcal{R}(G)$ of representative functions on $G$ is an element in $\mathcal{R}(G)^*$ such that
\begin{enumerate*}[label=(\roman*)]
\item $\alpha * \int = \alpha(1)\int$ for every $\alpha \in \mathcal{R}(G)^*$ and
\item $\int (f^2) > 0$ for every $f \neq 0$ in $\mathcal{R}(G)$.
\end{enumerate*}
The Tannaka Duality Theorem then establishes an equivalence of categories between compact topological groups and commutative Hopf $\R$-algebras with a Haar integral $\int$ and whose group of algebra morphisms to $\R$ is dense in the linear dual (see, for instance, \cite[Theorem 3.5]{Hochschild} or its more modern rephrasing \cite[Theorem 3.4.3]{Abe}).
As a consequence, if $H$ is any Hopf algebra over a field $\Bbbk$ and $\lambda \in H^*$ is a linear functional such that
\begin{equation}\label{eq:classint}
\mu * \lambda = \mu(1)\lambda \quad \text{ for all }  \quad \mu \in H^*,
\end{equation}
then $\lambda$ is called a \emph{left integral} for (or on) $H$ and the space they form is denoted by $\int_l H^*$. Hopf algebras which admit such an integral can be considered as noncommutative abstractions of the algebras of functions on compact groups.

Integrals for Hopf algebras are well-studied in the literature and a number of outstanding results were proved since their introduction. The existence and uniqueness (up to scalar multiples) of integrals for finite-dimensional Hopf algebras, for example, was already proved in the seminal paper \cite{LarsonSweedler} of Larson and Sweedler, together with a Maschke-type theorem for Hopf algebras, and then re-established with different approaches in \cite{KauffmanRadford} and \cite{VanDaele}. Integrals for infinite dimensional Hopf algebras were also considered by Sweedler in \cite{Swe-Integ}, together with some corresponding representation theoretic properties for the Hopf algebra and for the category of comodules over it, 
and uniqueness of integrals for them was established by Sullivan in \cite{Sullivan} (see also \cite{Stefan}). The case of finitely generated and projective Hopf algebras over a commutative ring and their connections with Frobenius algebras was tackled by Pareigis in \cite{Pareigis-Frob}.
The subsequently developed theory of integrals turned into a powerful instrument for the study of finite-dimensional Hopf algebras. For instance, Radford used it to prove that the order of the antipode of a finite-dimensional Hopf algebra is finite \cite{Radford-Order} and the celebrated Larson-Sweedler theorem \cite{LarsonSweedler} entails that a finite-dimensional bialgebra is a Hopf algebra if and only if it admits a non-degenerate left integral.
Therefore, it is not surprising that integrals and their study have been introduced and investigated for almost all the Hopf-like structures, from (co)quasi-Hopf algebras \cite{BulacuCaenepeel,HausserNill,FlorinFred}, to braided Hopf algebras \cite{BKLT}, weak Hopf algebras \cite{GabiNillSzlachanyi}, Hopf algebroids \cite{GabiHA}, Hopf monoids \cite{Gabi-HopfMonoids}, and also for some more general notions, such as coalgebras \cite{Iovanov-AbstrInteg}.

In the present paper, we focus on bialgebras over a field and we offer a novel perspective on integrals which differs profoundly from the more classical one, where they are defined as those linear functionals on $B$ satisfying \eqref{eq:classint}. This relies on a broad categorical framework, which
leads to a fresh new notion of integrals for bialgebras. Naively speaking, these can interpreted as translation-invariant linear functionals on the associated quantum monoid
and, differently from the classical ones, they carry more significant information about the bialgebra itself. For example, we will show in a dual Maschke-type theorem how the existence of total integrals in our sense relates to coseparability (see \cref{prop:cosep} and \cref{cor:totiBinj}). Furthermore, these integrals behave more reasonably than their classical counterpart, in the sense that we can prove an existence and uniqueness theorem for integrals on finite-dimensional, and even left Artinian, bialgebras, see \cref{coro:iBcopsu}. The fact that this does not hold, in general, for the classical integrals is highlighted in \cref{ex:findimcase} and \cref{ex:Atimesk}.
%

It is noteworthy that the existence of these new integrals
does not require, and does not imply, the existence of an antipode. In fact, over Hopf algebras they return the usual ones from the literature.
In addition, they often offer information on the integrals for the Hopf envelope associated with the given bialgebra, since in a number of cases of interest one space of integrals naturally identifies with the other. In this, a key role is played by the $\oslash$-construction from \cite{ArMeSa}. Indeed, integrals for a bialgebra $B$ are distinguished linear functionals in its pre-dual $\pred{B} = (B \oslash B)^*$ and for certain bialgebras, such as the left Artinian or the commutative ones, it turns out that the Hopf envelope can be realised exactly as $B \oslash B = (B \otimes B)/(B \otimes B) B^+$, emphasizing a neat connection between the Hopf envelope on the one hand, and our integrals on the other.

All in all, this novel concept is more flexible and paves the way for more general applications, while still returning the classical definition in the presence of an antipode.

Concretely, we begin by revising the classical case \cref{ssec:classical}, in order to clarify, support and justify the categorical approach to be followed in \cref{ssec:cats}. Then, we interpret our general construction in the specific case of bialgebras in \cref{sec:bialg}, which leads to the main definition of our work: \cref{def:oslash}. Discussing some concrete realisations of it took us to realise the distinguished role that a certain algebraic object plays in this situation: the canonical morphism $i_B$ from \cite{ArMeSa}, which appears in approaching the celebrated structure theorem for Hopf modules (see \cref{ssec:iB}). In fact, properties of $i_B$, such as its injectivity or surjectivity, simplifies the study of our integrals and allow us to relate them with the traditional ones (see, for instance, \cref{ssec:cocomm}).
After concluding this general introduction, i.e.\ \cref{sec:general}, we move forward to analyse some cases of interest in \cref{sec:partcases}, such as the connections with coseparability \cref{ssec:cosep}, the integrals for one-sided Hopf algebras \cref{ssec:intnHopf}, and the  cocommutative \cref{ssec:cocomm}, commutative \cref{ssec:commcase}, and finite-dimensional \cref{ssec:findim} cases.

In particular, with \cref{prop:cosep} and \cref{cor:totiBinj} we provide an analogue of the dual Maschke theorem (see \cite[Theorem]{Larson}) for bialgebras: a bialgebra $B$ which is coseparable as a $B$-module coalgebra, always admits a total integral. Conversely, the existence of a total integral implies coseparability whenever the canonical map $i_B$ is injective.

We also describe the space of integrals on a right Hopf algebra $B$ as a suitable subspace of the linear dual $B^*$ in \cref{pro:intgRHopf}.


Moreover, we prove a number of uniqueness results for integrals for cocommutative bialgebras. On the one hand, we show that the space of integrals for bialgebras admitting a total integral is one-dimensional; see \cref{lem:tot}. On the other hand, we naturally identify the space of integrals for a bialgebra $B$ with the space of integrals on the Hopf envelope $\mathrm{H}(B)$, provided the canonical map $i_B$ is surjective; see \cref{prop:intgccom-iBsu}.
Similar results are obtained for commutative and finite-dimensional bialgebras, or more generally, for left Artinian bialgebras, even without conditions on $i_B$. Furthermore, in the finite-dimensional case, we show in \cref{coro:iBcopsu} that the space of integrals is one-dimensional, analogous to the behaviour of integrals for Hopf algebras.

We conclude with \cref{sec:examples}, where we compute explicitly and in details the integrals in a number of concrete examples such as the polynomial bialgebra with one group-like variable, the quantum plane and the coordinate bialgebra of $n$-by-$n$ matrices.
Remarkably, in all these cases the space of integrals of the relevant bialgebra $B$ identifies with that of its Hopf envelope $\mathrm{H}(B)$ and is therefore at most one-dimensional.

\section{Integrals for bialgebras}\label{sec:general}

Throughout the paper, $\Bbbk$ denotes a field of arbitrary characteristic and not necessarily algebraically closed and  $\mathfrak{M}$ is the category of vector spaces over $\Bbbk$. As a matter of notation, the unadorned tensor product $\mbox{-}\otimes\mbox{-}$ denotes the tensor product over $\Bbbk$ of vector spaces and $\mathfrak{M}(\mbox{-},\mbox{-})$ denotes the vector space of $\Bbbk$-linear maps. More generally, if $\mathcal{C}$ is any locally small category and $X,Y$ are two objects in $\mathcal{C}$, then $\mathcal{C}(X,Y)$ denotes the set of morphisms in $\mathcal{C}$ from $X$ to $Y$. Exceptionally, we may write $\mathrm{End}_\Bbbk(\mbox{-})$ for the vector space of $\Bbbk$-linear endomorphisms.

\subsection{The classical case revised}\label{ssec:classical}
To begin with, we revisit the classical case as a prelude to the more general approach.
If $H$ is a Hopf algebra over $\Bbbk$ with antipode $S$, then we can consider its linear dual  $H^{\ast} \coloneqq \mathfrak{M}(H,\Bbbk)$.
It is well-known that the rational part $H^{\ast \rat}$ of $H^\ast$ becomes an object in the category $\mathfrak{M}_{H} ^{H}$ of (right) $H$-Hopf modules, see \cite[Lemma 2.8]{Swe-Integ}, and that the right $H$-coinvariant elements in $H^{\ast \rat}$ are exactly the left integrals on $H$, i.e.~linear maps $\tau:H\to\Bbbk$ such that $\sum h_1\tau(h_2)=1_H\tau(h)$ for every $h\in H$, see \cite[page 330]{Swe-Integ}.\medskip

Our initial goal is to establish \cref{pro:Rclassic}, which will enable us to derive a functorial interpretation of the $H$-Hopf module structure on $H^{\ast \rat}$.
First note that $H^*$ is an algebra through the convolution product  $\ast $. As usual, define $(h\rightharpoonup f)(x) \coloneqq f(xh)$ and
$\left(
f\leftharpoondown h\right) \left( x\right) \coloneqq f\left( xS\left( h\right)
\right)$ for every $f\in H^*,x,h\in H$, so that $f\leftharpoondown h=S(h)\rightharpoonup f$. One easily checks that
\[
\left( f\ast g\right)\leftharpoondown h=\sum \left( f\leftharpoondown
h_{2}\right) \ast \left( g\leftharpoondown h_{1}\right) \qquad\text{and}\qquad
\varepsilon \leftharpoondown h=\varepsilon \left( h\right) \varepsilon .
\]
This means that $\left( H^{\ast },\leftharpoondown \right) $ is a right $H^{%
\cop }$-module algebra, i.e.~an algebra in the monoidal category $%
\left( \mathfrak{M}_{H^{\cop }},\otimes ,\Bbbk \right) .$
Thus, it makes sense to consider the category
$_{H^\ast }\left(\mathfrak{M}_{H^\cop}\right)$ of left $H^*$-modules in $\mathfrak{M}_{H^\cop}$.

\begin{proposition}\label{pro:Rclassic}
Let $H$ be a Hopf algebra.
Then the usual adjunction $L\dashv R:{}_{H^*}\mathfrak{M}\to \mathfrak{M}^H$ that yields the isomorphism between the category of right $H$-comodules and the category of rational left $H^*$-modules, induces an adjuction $L\dashv R:{_{H^{\ast }}\left(
\mathfrak{M}_{H^{\cop }}\right) }\to \mathfrak{M}_{H} ^{H}$
as follows.

The left adjoint is the functor%
\begin{equation*}
L: \mathfrak{M}_{H} ^{H}\rightarrow {_{H^{\ast }}\left(
\mathfrak{M}_{H^{\cop }}\right) },\qquad \left( M,\mu^r,\rho \right)\mapsto \left( (M,\mu^r),\mu^l _{\rho }\right)\qquad f\mapsto f
\end{equation*}%
 where $%
\mu^l_{\rho }\left( f\otimes m\right) \coloneqq \sum m_{0}f\left( m_{1}\right) $.
Moreover  the functor $L$ is fully faithful and its right adjoint is
\begin{equation*}
R:{_{H^{\ast }}\left( \mathfrak{M}_{H^{\cop }}\right) }\rightarrow
\mathfrak{M}_H^H,\qquad ((N,\mu^r),\mu^l)\mapsto(N^{\mathrm{rat}},\mu_\rat^r, \rho_\rat),\qquad f\mapsto f^{\mathrm{rat}},
\end{equation*}%
where $N^{\mathrm{rat}}$ is the rational part of the left $H^{\ast }$%
-module $\left( N,\mu^l \right) $,
the map
$\mu^r_\rat: N^\rat\otimes H\to N^\rat$ is induced by $\mu^r:N\otimes H\to N$
and the map
$\rho_\rat:N^\rat\to N^\rat\otimes H$ is uniquely defined by $\mu^l_\rat=\mu^l_{\rho_\rat}$ where $\mu^l_\rat:H^*\otimes N^{\mathrm{rat}}\to N^{\mathrm{rat}}$ is induced by $\mu^l:H^*\otimes N\to N$.
\end{proposition}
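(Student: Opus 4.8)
The plan is to build on the classical adjunction $L \dashv R \colon {}_{H^*}\mathfrak{M} \to \mathfrak{M}^H$ and lift it along the forgetful functors that remember the extra $H^{\cop}$-module structure on one side and the extra right $H$-module structure on the other. First I would verify that $L$ is well defined: given $(M,\mu^r,\rho) \in \mathfrak{M}_H^H$, the underlying space $M$ is already a rational left $H^*$-module via $\mu^l_\rho(f \otimes m) = \sum m_0 f(m_1)$, so the only thing to check is that $\mu^l_\rho$ and $\mu^r$ together make $M$ into a left $H^*$-module \emph{in} $\mathfrak{M}_{H^{\cop}}$, i.e.\ that $\mu^l_\rho$ is a morphism of right $H^{\cop}$-modules where $H^*$ carries the $\leftharpoondown$-action. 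Unwinding the definitions, this is the identity $\sum (f \leftharpoondown h_2) \cdot (m \cdot h_1) = \big(\sum m_0 f(m_1)\big)\cdot h$ in $M$, which should follow by a direct Sweedler-notation computation from the Hopf module compatibility $\rho(m\cdot h) = \sum m_0 \cdot h_1 \otimes m_1 h_2$, the definition of $\leftharpoondown$ via the antipode, and the antipode axioms; on morphisms $L$ is the identity, so functoriality is immediate once well-definedness is established.

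Next I would treat $R$. For $((N,\mu^r),\mu^l) \in {}_{H^*}(\mathfrak{M}_{H^{\cop}})$, the rational part $N^\rat$ of the left $H^*$-module $(N,\mu^l)$ carries, by the classical theory, a right $H$-comodule structure $\rho_\rat$ with $\mu^l_\rat = \mu^l_{\rho_\rat}$. The two points to check are: (a) the right $H$-action $\mu^r$ on $N$ restricts to $N^\rat$, and (b) $(N^\rat,\mu^r_\rat,\rho_\rat)$ is an $H$-Hopf module. For (a), if $n \in N^\rat$ then $H^* \cdot n$ is finite-dimensional, and using that $\mu^l$ is $H^{\cop}$-linear one shows $H^*\cdot(n\cdot h)$ lies inside the finite-dimensional space $(H^* \cdot n)\cdot h$ (or is otherwise seen to be finite-dimensional), so $n \cdot h \in N^\rat$. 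For (b), the Hopf module axiom for $N^\rat$ is exactly the translation, via $\rho_\rat$, of the $H^{\cop}$-linearity of $\mu^l$ — essentially the reverse of the computation done for $L$ — so the same manipulation with the antipode closes it. On morphisms, $R$ sends $f$ to $f^\rat$, which is the restriction-corestriction of $f$ to rational parts; this is functorial because rationality is preserved by $H^*$-linear maps.

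Then I would establish the adjunction and full faithfulness of $L$. The cleanest route is to observe that everything sits over the classical situation: we have a commuting square of functors in which the vertical arrows are the forgetful functors ${}_{H^*}(\mathfrak{M}_{H^{\cop}}) \to {}_{H^*}\mathfrak{M}$ and $\mathfrak{M}_H^H \to \mathfrak{M}^H$, the bottom is the classical $L \dashv R$, and the top is the pair we have just defined; both new functors strictly lift the old ones. Since the classical $L$ is fully faithful with unit an isomorphism (the counit of the comonadic-type situation identifying $\mathfrak{M}^H$ with rational $H^*$-modules), and since the lifted $L$ is literally the identity on underlying data, the unit $\eta \colon \id \to RL$ of the lifted pair has the same underlying map as the classical unit, hence is an isomorphism; this gives full faithfulness of the lifted $L$. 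For the adjunction itself I would exhibit the natural isomorphism ${}_{H^*}(\mathfrak{M}_{H^{\cop}})(L M, N) \cong \mathfrak{M}_H^H(M, R N)$: a morphism on the left is an $H^*$-linear, right $H$-linear map $LM \to N$; its image lies in $N^\rat$ because $M = LM$ is rational and $H^*$-linear maps preserve rationality, and it is automatically $H$-colinear because on rational modules $H$-colinearity is equivalent to $H^*$-linearity — so it corresponds to a map $M \to RN$ in $\mathfrak{M}_H^H$, and conversely. Naturality is routine.

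The main obstacle I expect is bookkeeping rather than conceptual: correctly tracking the $\cop$ on one side against the antipode-twisted action $\leftharpoondown$ on the other, so that the two Sweedler computations (for well-definedness of $L$ and for the Hopf module axiom on $R N$) come out consistent with the identity $f \leftharpoondown h = S(h) \rightharpoonup f$ and the relation $(f \ast g)\leftharpoondown h = \sum (f \leftharpoondown h_2)\ast(g\leftharpoondown h_1)$ recorded before the statement. A secondary subtlety is checking (a), that $\mu^r$ preserves the rational part, without circularity; I would isolate this as a small lemma, since it is exactly the compatibility encoded by "$H^*$-module in $\mathfrak{M}_{H^{\cop}}$" and is what makes $R$ land in $\mathfrak{M}_H^H$ rather than merely in comodules.
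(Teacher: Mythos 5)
Your proposal is correct and follows essentially the same route as the paper: the same Sweedler computation (using $f\leftharpoondown h = S(h)\rightharpoonup f$ and the Hopf-module compatibility) shows that $\mu^l_\rho$ is right $H^{\cop}$-linear, the same compatibility in reverse shows that $N^\rat$ is a right $H$-submodule and a Hopf module, and the adjunction with fully faithful $L$ is obtained exactly as you say, with identity unit and the inclusion $N^\rat\hookrightarrow N$ as counit. One small caution on your step (a): finite-dimensionality of $H^*\cdot(n\cdot h)$ is not by itself equivalent to rationality (the annihilator must be closed in the finite topology), so rather than bounding the orbit you should, as the paper does, compute $f\cdot(n\cdot h)=\sum n_0\cdot h_1\, f(n_1h_2)$ directly — this exhibits the element $\sum n_0\cdot h_1\otimes n_1h_2$ witnessing that $n\cdot h\in N^\rat$ and simultaneously yields the coaction formula needed for the Hopf-module axiom.
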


\begin{proof}
Let $(M,\mu^r,\rho )$ be a $H$-Hopf module. Then $(M,\mu^r)$ is an object in $\mathfrak{M}_{H^{\cop }}$.
We check that $\mu^l_\rho:H^*\otimes M\to M$ is a morphism in $\mathfrak{M}_{H^{\cop }}$:
\begin{align*}
\mu^l _{\rho }\left( \left( f\otimes m\right) h\right) & = \mu^l _{\rho }\left( \left( f\leftharpoondown h_{2}\right)\otimes mh_{1}
\right) =\sum \left( mh_{1}\right) _{0}\left( f\leftharpoondown h_{2}\right)
\left( \left( mh_{1}\right) _{1}\right) \\
& = \sum m_{0}h_{1}\left( f\leftharpoondown h_{3}\right) \left(
m_{1}h_{2}\right) =\sum m_{0}h_{1}f\left( m_{1}h_{2}S\left( h_{3}\right)
\right)  \\
& = \sum m_{0}hf\left( m_{1}\right) = \sum m_{0}f\left( m_{1}\right) h=\mu^l _{\rho }\left( f\otimes m\right) h
\end{align*}%
Thus, since $\left( M,\mu^l _{\rho }\right) $ is a left $H^{\ast }$%
-module in $\mathfrak{M}$, we get that $\left(( M,\mu^r),\mu^l _{\rho }\right) $ is a left $H^{\ast }$%
-module in $\mathfrak{M}_{H^{\cop }}$.

Let now $((N,\mu^r),\mu^l)$ be an object in ${_{H^{\ast }}\left( \mathfrak{M}_{H^{\cop }}\right) }$.
We know that $(N^{\mathrm{rat}},\rho_\rat)$ is a right $%
H$-comodule.
Let us check that $N^\rat$ is a right  $H$-submodule of $(N,\mu^r)$
 and hence
$\mu^r$ induces an action $\mu^r_\rat:N^\rat\otimes H\to N^\rat.$
For $%
n\in N^{\mathrm{rat}},f\in H^{\ast },$ we compute%
\begin{align*}
f\cdot \left( nh\right) &=\mu^l \left( f\otimes nh\right) =\mu^l \left( S(
h_{2}) h_{3}\rightharpoonup f\otimes nh_{1}\right) =\mu^l \left( \left(
h_{3}\rightharpoonup f\right) \leftharpoondown h_{2}\otimes nh_{1}\right) \\
& = \mu^l \left( \mu^r\left(\left(
h_{2}\rightharpoonup f \otimes n\right) \otimes h_{1}\right)\right) = \mu^l \left( \left( h_{2}\rightharpoonup f\right) \otimes n\right)
h_{1}\\
& = \left( \left( h_{2}\rightharpoonup f\right) \cdot n\right) h_{1}=\sum
\left( n_{0}\left( h_{2}\rightharpoonup f\right) \left( n_{1}\right) \right)
h_{1}
=\sum n_{0}h_{1}f\left( n_{1}h_{2}\right) .
\end{align*}%
This proves that $nh\in N^{\mathrm{rat}}$, so that $N^{\mathrm{rat}}$ is a right $H^\cop$-submodule of $N$, and that $\rho_\rat \left( nh\right)
=\sum n_{0}h_{1}\otimes n_{1}h_{2}$, so that $(N^\rat,\mu^r_\rat,\rho_\rat)$ is a $H$-Hopf module as desired.\\
It is now easy to see that $\left( L,R\right) $ is an adjunction with $L$ fully faithful (the unit is the identity).
\begin{invisible}
Given an object $((N,\mu^r),\mu^l)$  in ${_{H^{\ast }}\left( \mathfrak{M}_{H^{\cop }}\right) }$ and an object $(M,\mu^r,\rho )$ in $\mathfrak{M}_H^H$, we have
\begin{align*}
  LR((N,\mu^r),\mu^l) & =L(N^\rat,\mu^r_\rat, \rho_\rat)=((N^\rat,\mu^r_\rat),\mu^l_{\rho_\rat})=((N^\rat,\mu^r_\rat),\mu^l_\rat)\\
  RL(M,\mu^r,\rho ) &=R((M,\mu^r),\mu^l_\rho)=(M^\rat,\mu^r_\rat,\rho_\rat)=(M,\mu^r,\rho ).
\end{align*}
Thus we can define the counit of the adjunction $\epsilon((N,\mu^r),\mu^l):((N^\rat,\mu^r_\rat),\mu^l_\rat)\to
((N,\mu^r),\mu^l)$ as the canonical inclusion and take the unit to be $\eta=\id$. Since $((N^\rat,\mu^r_\rat),\mu^l_\rat)$ is a left $H^*$-submodule and a right $H^\cop$-submodule of $((N,\mu^r),\mu^l)$, we get that $\epsilon((N,\mu^r),\mu^l)$ is a morphism in ${}{_{H^{\ast }}\left( \mathfrak{M}_{H^{\cop }}\right) }$. Moreover, since $R$ acts as the restriction on morphisms, $\epsilon((N,\mu^r),\mu^l)$ comes out to be natural and hence to define a natural transformation $\epsilon:LR\to \id$. Moreover $R\epsilon((N,\mu^r),\mu^l):RLR((N,\mu^r),\mu^l)=R((N,\mu^r),\mu^l)\to R((N,\mu^r),\mu^l)$ is just the restriction of $\epsilon((N,\mu^r),\mu^l)$ whence the identity, and $\epsilon L(M,\mu^r,\rho ):LRL(M,\mu^r,\rho )=L(M,\mu^r,\rho )\to L(M,\mu^r,\rho )$ is the canonical inclusion whence the identity as well. Thus $R\epsilon=R$ and $\epsilon L =L$ and hence $(L,R,\eta,\epsilon)$ is an adjunction with $L$ fully faithful (i.e. $R$ is a coreflection) as $\eta$ is invertible.
\end{invisible}
\end{proof}

\begin{remark}
   In the particular case when $H$ is finite-dimensional, every left $H^*$-module is rational and hence the counit $\epsilon$ becomes the identity, too. In this case $(L,R)$ is a category isomorphism.
\end{remark}

We now turn to the desired functorial interpretation of the $H$-Hopf module structure on $H^{\ast \rat}$. Note that $H^{\ast }$ itself is
an object in ${_{H^{\ast }}\left( \mathfrak{M}_{H^{\cop %
}}\right) }$ through its multiplication. Therefore, in view of \cref{pro:Rclassic}, we can compute $RH^{\ast }=H^{\ast \mathrm{rat}}$ which then gains a structure of $H$-Hopf module. This shows how the usual $H$-Hopf module structure of $H^{\ast \mathrm{rat}}$ arises from the existence of a right adjoint $R$ for the functor $L$. We mentioned that left integrals are the coinvariant elements in $H^{\ast \mathrm{rat}}$. It happens that the fully faithful functor $F=(-)\otimes H^\bullet_\bullet:\mathfrak{M}\to \mathfrak{M}^H_H,V\mapsto V\otimes H^\bullet_\bullet$, admits as right adjoint the functor $G=(-)^{\mathrm{co}H}$, which computes the coinvariants. The composition of the adjunctions $(L,R)$ and $(F,G)$ yields the adjunction $(LF,GR)$ where
\begin{equation*}
GR:{_{H^{\ast }}\left( \mathfrak{M}_{H^{\cop }}\right) }\rightarrow
\mathfrak{M},\qquad ((N,\mu^r),\mu^l)\mapsto (N^{\mathrm{rat}})^{\mathrm{co} H},\qquad f\mapsto (f^{\mathrm{rat}})^{\mathrm{co} H}.
\end{equation*}
Thus, the space of left integrals on $H$ can be realised as 
\[\textstyle\int_l H^*=(H^{*\mathrm{rat}})^{\mathrm{co} H}=GR(H^*).\]
Summing up, the following functors played a central role in the foregoing
\begin{equation}\label{diag:func}
\xymatrix{\mathfrak{M}\ar@<.5ex>[r]^-F& \mathfrak{M}_H^H\ar@<.5ex>[l]^--{G}\ar@<.5ex>[r]^-L& {_{H^{\ast }}\left( \mathfrak{M}_{H^{\cop }}\right) }\ar@<.5ex>[l]^-{R} }.
\end{equation}

In the construction above the existence of an antipode $S$ for $H$ is explicitly used to endow $H^*$ with a structure of right $H^\cop$-module and then consider the category ${_{H^{\ast }}\left( \mathfrak{M}_{H^{\cop }}\right) }$.\medskip

In order to get rid of the antipode and pave the way toward possible extensions to other contexts, we need the following easy result (an analogue for right modules over a left $H$-comodule  algebra $K$  is observed in \cite[end of page 362]{Sch02}).

\begin{lemma}
\label{lem:MBA}
Let $B$ be a bialgebra and let $A$ be a right $B$-module algebra. Then we have a category isomorphism $(\mathfrak{M}_B)_A\cong {}_{A^\op}(\mathfrak{M}_ {B^\cop}).$
\end{lemma}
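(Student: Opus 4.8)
The claim is a category isomorphism $(\mathfrak{M}_B)_A \cong {}_{A^\op}(\mathfrak{M}_{B^\cop})$, where $A$ is a right $B$-module algebra. The idea is that both sides consist of the same underlying data — a vector space $M$ carrying a right $B$-action and an $A$-action that is compatible with the $B$-action — only organised differently: on the left, $M$ is an $A$-bimodule-like object internal to $\mathfrak{M}_B$ (actually a right $A$-module in $\mathfrak{M}_B$), while on the right it is a left $A^\op$-module in $\mathfrak{M}_{B^\cop}$. So I would construct the isomorphism by sending an object $(M, \mu^r_B, \mu^r_A)$ of $(\mathfrak{M}_B)_A$ to the same vector space $M$, with the same $B$-action now viewed as a $B^\cop$-action (on a vector space this is literally the same map, since $B$ and $B^\cop$ have the same underlying algebra), and with left $A^\op$-action defined by $a \cdot m \coloneqq \mu^r_A(m \otimes a) = m \cdot a$. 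On morphisms the functor is the identity. The inverse functor is defined symmetrically, again acting as the identity on underlying spaces and morphisms, so that the composites are literally identity functors — this is what makes it an isomorphism of categories and not merely an equivalence.

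\textbf{Key steps.} First I would spell out the two categories explicitly. An object of $(\mathfrak{M}_B)_A$ is a right $B$-comodule — sorry, a right $B$-\emph{module} — $M$ together with a right $A$-action $M \otimes A \to M$ that is a morphism of right $B$-modules, where $M \otimes A$ carries the diagonal $B$-action induced by the comultiplication of $B$ and the module-algebra action of $B$ on $A$; explicitly, $(m \otimes a)b = \sum m b_1 \otimes a b_2$. The compatibility condition thus reads $(ma)b = \sum (m b_1)(a b_2)$ for all $m \in M$, $a \in A$, $b \in B$. An object of ${}_{A^\op}(\mathfrak{M}_{B^\cop})$ is a right $B^\cop$-module $N$ with a left $A^\op$-action $A^\op \otimes N \to N$ that is a morphism of right $B^\cop$-modules, where $A^\op \otimes N$ carries the diagonal $B^\cop$-action; unravelling, with $a \star_{A^\op} a' = a'a$ and $\Delta^\cop(b) = \sum b_2 \otimes b_1$, the compatibility becomes $(a \cdot n)b = \sum (a \cdot^{\text{as }B\text{-mod-alg, using }b_1}\!) \,?\cdots$ — the point I would make precise is that after translating the $A^\op$-action back to a right $A$-action $n \cdot a \coloneqq a \cdot n$ and the $B^\cop$-action back to a right $B$-action, this compatibility condition is \emph{exactly} $(na)b = \sum (nb_1)(ab_2)$, i.e.\ the same equation as on the other side. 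Second, I would verify that the two assignments are functorial — trivial, since they are the identity on hom-sets and hom-sets on both sides are just $\Bbbk$-linear maps commuting with both actions, which is a condition symmetric under the translation. Third, observe the composites in both orders are the identity functor, giving the isomorphism.

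\textbf{Main obstacle.} There is no real obstacle; the only thing requiring care is the bookkeeping of opposites and co-opposites, making sure that the diagonal $B^\cop$-action on $A^\op \otimes N$ genuinely matches the diagonal $B$-action on $M \otimes A$ under the swap — in particular that the flip from $\Delta$ to $\Delta^\cop$ is precisely compensated by the flip from $A$ to $A^\op$ in the tensor factor order, so the two compatibility identities coincide on the nose. I would therefore devote the bulk of the write-up to displaying these two diagonal actions side by side and checking they agree, after which the isomorphism is immediate. (As the authors note, this is the evident variant for right modules of the statement in \cite[end of page 362]{Sch02}, so I would keep the verification brief.)
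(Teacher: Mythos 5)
Your proposal is correct and follows essentially the same route as the paper's proof: the identity on underlying spaces and morphisms, translating the right $A$-action into a left $A^\op$-action and the right $B$-action into a right $B^\cop$-action, and then checking that the two compatibility conditions $(ma)b=\sum (mb_1)(ab_2)$ and its $A^\op$/$B^\cop$ counterpart coincide because the flip to $\Delta^\cop$ exactly compensates the flip of tensor factors. The only cosmetic issue is the garbled intermediate formula for the right-hand compatibility condition, which you should either complete or delete, since the final stated identity is the correct one.
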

\begin{proof}
We include the proof for the reader's convenience. Denote by $a^\op$ an element of $a\in A$ regarded as an element in $A^\op$. It is well-known that right $A$-modules and left $A^\op$-modules correspond to each other via the assignment $(M,\alpha:M\otimes A\to M)\leftrightarrow (M,\alpha^\op:A^\op\otimes M\to M)$ defined by $\alpha^\op(a^\op\otimes m)=\alpha(m\otimes a)$. On the other hand, right $B$-modules and right $B^\cop$-modules correspond to each other via the assignment $(M,\beta :M\otimes B\to M)\leftrightarrow (M,\beta^\cop :M\otimes B^\cop\to M)$ defined by $\beta^\cop (m\otimes b)=\beta (m\otimes b)$. First note that $A^\op$ becomes an algebra in $\mathfrak{M}_ {B^\cop}$ with action $A^\op\otimes B^\cop\to A^\op,a^\op\otimes b\mapsto a^\op b\coloneqq (ab)^\op$.
Define the candidate category isomorphism $F:(\mathfrak{M}_B)_A\to {}_{A^\op}(\mathfrak{M}_ {B^\cop})$ that takes $(M,\alpha,\beta )$, to $(M,\alpha^\op,\beta^\cop )$. Set $ma\coloneqq \alpha(m\otimes a)$, $mb\coloneqq \beta(m\otimes b)$ and $a^\op m\coloneqq \alpha^\op(a^\op\otimes m)=ma$. Since $(a^\op m)b=(ma)b$ and $\sum(a^\op b_2) (mb_1)=\sum(ab_2)^\op (mb_1)=\sum(mb_1)(ab_2)$, we get that $(a^\op m)b=\sum(a^\op b_2) (mb_1)$ if and only if $(ma)b=\sum(mb_1)(ab_2)$ i.e. $\alpha^\op$ is right $B^\cop$-linear if and only if $\alpha$ is right $B$-linear.
\end{proof}

Set ${^*H}\coloneqq (H^*)^\op$. By means of the previous lemma in case $A={^*H}$ and $B=H$, the  functor $L$ of \cref{pro:Rclassic} can be rephrased as
\begin{equation*}
\xymatrix{\left( \mathfrak{M}_{H}\right) ^{H}=\mathfrak{M}_H^H\ar[r]^-{L}&{_{H^{\ast }}\left(
\mathfrak{M}_{H^{\cop }}\right) }\cong (\mathfrak{M}_H)_{^*H} }
\end{equation*}
Our next aim is to further modify the functor $L:\left( \mathfrak{M}_{H}\right) ^{H}\to (\mathfrak{M}_H)_{^*H}$ above by replacing $^*H$ with another object that becomes an algebra in the monoidal category $(\mathfrak{M}_H,\otimes,\Bbbk)$ with no requirement of an antipode and to prove that such an $L$ has a right adjoint. The reader not interested in the categorical framework behind this construction can directly move to \cref{sec:bialg} where the resulting structure is expounded.
Here we just mention that we are going to replace the category $\mathfrak{M}_H$ by a monoidal category $\Mm$, $H$ by a coalgebra $C$ in $\Mm$, and we are going to look for a functor $L:\mathcal{M} ^{C}\to \mathcal{M}_\pred{C}$ where $\pred{C}$ is a suitable algebra attached to $C$. This is the main subject of the following section.

\subsection{A categorical setting}\label{ssec:cats}

Let $(\mathcal{M},\otimes,\mathbf{1})$ be a monoidal category with constraints $a,l,r$. For the sake of clarity, we often keep track of the unit constraints $l$ and $r$ in computations, but we omit the associativity constraint $a$. Here, given a coalgebra $C$ in $\mathcal{M}$ and under the requirement that $C$ has a left pre-dual $\pred{C}$, we endow $\pred{C}$ with an algebra structure (\cref{lem:*algebra}) and construct  a functor  $L:\mathcal{M} ^{C}\rightarrow \mathcal{M}_{\pred{C}}$ (\cref{pro:L}).

\begin{definition}
By adapting \cite[4.1.3]{Goyvaerts-Vercruysse}, given a monoidal category $%
\mathcal{M}$ we say that an object $X\in \mathcal{M}$ is \emph{%
left pre-rigid} if there is an object $\pred{X}$ (a \emph{left pre-dual}
of $X$) and a morphism $\mathrm{ev}_{X}:X\otimes \pred{X}\rightarrow
\mathbf{1}$ (the \emph{evaluation}) such that the map
\begin{equation*}
\mathcal{M}\left( T,\pred{X}\right) \rightarrow \mathcal{M}\left(
X\otimes T,\mathbf{1}\right) :f\mapsto \mathrm{ev}_{X}\circ \left( X\otimes
f\right),
\end{equation*}%
is bijective for every object $T$ in $\mathcal{M}$. The category $\mathcal{M}$ is said to be \emph{left pre-rigid} if every object in $\mathcal{M}$ is left pre-rigid.
\end{definition}

\begin{example} \label{rmk:closed=>prig}
Assume $X\otimes \left( -\right) :\mathcal{M}\rightarrow \mathcal{M}$ has a
right adjoint $\left[ X,-\right] $ (e.g. the monoidal category $\mathcal{%
M}$ is \emph{left closed}). We set $\pred{X}\coloneqq \left[ X,\mathbf{1}\right].$
In particular, we have the bijection%
\begin{equation*}
\mathcal{M}\left( T,\pred{X}\right) =\mathcal{M}\left( T,\left[ X,\mathbf{%
1}\right] \right) \cong \mathcal{M}\left( X\otimes T,\mathbf{1}\right) .
\end{equation*}%
If we take $T=\pred{X},$ we get the bijection
$\mathcal{M}\left(\pred{X},\pred{X}\right) \cong \mathcal{M}\left( X\otimes \pred{X},\mathbf{1}%
\right) .$ Denote by $\mathrm{ev}_{X}:X\otimes \pred{X}\rightarrow
\mathbf{1}$ the morphism corresponding to $\mathrm{Id}_{\pred{X}}.$ Since $\mathrm{ev}_{X}$ is the counit of the adjunction evaluated at $%
\mathbf{1}$, we get that the starting bijection above is given by the
assignments $f\mapsto \mathrm{ev}_{X}\circ \left( X\otimes f\right) .$
In other words, $X$ is left pre-rigid for every object $X$ and hence $\mathcal{M}$ is left pre-rigid.
\end{example}


\begin{lemma}
\label{lem:*algebra} Let $\mathcal{M}$ be a monoidal category and let $%
\left( C,\Delta _{C},\varepsilon _{C}\right) $ be a coalgebra in $\mathcal{M}
$. Assume that $C$ is a left pre-rigid object with left pre-dual $\pred{C}$. Then
$\left( \pred{C},m_\pred{C},u_\pred{C}\right) $  is an algebra in $\mathcal{M}$ where the multiplication $m_\pred{C}$ and the unit $u_\pred{C}$ are uniquely defined by the equalities%
\begin{align}
\mathrm{ev}_{C}\circ \left( C\otimes m_\pred{C}\right) & =\mathrm{ev}%
_{C}\circ \left( r_{C}{\otimes \pred{C}}\right) \circ \left( C\otimes
\mathrm{ev}_{C}{\otimes \pred{C}}\right) \circ \left( \Delta _{C}\otimes {%
\pred{C}}\otimes \pred{C}\right) ,  \label{form:dualmult} \\
\mathrm{ev}_{C}\circ \left( C{\otimes }u_{\pred{C}}\right) & =\varepsilon
_{C}\circ r_{{C}}.  \label{form:dualun}
\end{align}
\end{lemma}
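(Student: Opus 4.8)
**Proof plan for \cref{lem:*algebra}.**

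The plan is to use the defining universal property of the left pre-dual repeatedly: a morphism into $\pred{C}$ is completely determined by postcomposing with $\mathrm{ev}_C\circ(C\otimes -)$, so both $m_\pred{C}\colon\pred{C}\otimes\pred{C}\to\pred{C}$ and $u_\pred{C}\colon\mathbf{1}\to\pred{C}$ are well-defined by the stated formulas, and verifying the algebra axioms amounts to checking equalities of morphisms out of $C\otimes(-)$ after applying $\mathrm{ev}_C$. First I would observe that the right-hand side of \eqref{form:dualmult}, namely $\mathrm{ev}_C\circ(r_C\otimes\pred{C})\circ(C\otimes\mathrm{ev}_C\otimes\pred{C})\circ(\Delta_C\otimes\pred{C}\otimes\pred{C})$, is indeed a morphism $C\otimes\pred{C}\otimes\pred{C}\to\mathbf{1}$, so by pre-rigidity with $T=\pred{C}\otimes\pred{C}$ there is a unique $m_\pred{C}$ with the required property; similarly $\varepsilon_C\circ r_C\colon C\otimes\mathbf{1}\to\mathbf{1}$ forces a unique $u_\pred{C}$ via $T=\mathbf{1}$.

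For associativity, I would show that $m_\pred{C}\circ(m_\pred{C}\otimes\pred{C})$ and $m_\pred{C}\circ(\pred{C}\otimes m_\pred{C})$ (composed appropriately with the associativity constraint, which I suppress) become equal after applying $\mathrm{ev}_C\circ(C\otimes -)$; then uniqueness in the pre-rigid property (with $T=\pred{C}\otimes\pred{C}\otimes\pred{C}$) gives the identity itself. Expanding the left side using \eqref{form:dualmult} twice yields a composite involving two evaluations and $(\Delta_C\otimes\pred{C})\circ\Delta_C$ in the first two tensor slots, i.e.\ effectively three-fold comultiplication once the unit constraints are bookkept; expanding the right side similarly gives a composite involving $(C\otimes\Delta_C)\circ\Delta_C$. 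Coassociativity of $\Delta_C$ then matches the two. The unit axioms are handled the same way: precomposing $m_\pred{C}\circ(u_\pred{C}\otimes\pred{C})$ with $\mathrm{ev}_C\circ(C\otimes-)$ and using \eqref{form:dualmult}, \eqref{form:dualun} together with counitality $\mathrm{ev}_C\circ(\varepsilon_C\otimes\pred{C})$-type collapses and the triangle/pentagon coherence for $l,r,a$ reduces it to $\mathrm{ev}_C$ itself, hence $m_\pred{C}\circ(u_\pred{C}\otimes\pred{C})=l_{\pred{C}}$ after applying uniqueness; the other unit law is symmetric.

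The main obstacle I anticipate is purely bookkeeping: keeping track of the unit constraints $r_C$, $l_{\pred C}$, $r_{\pred C}$ and the (suppressed) associativity constraints so that each side of every axiom is genuinely a morphism with the correct source and target, and invoking the coherence theorem at the right moments so that the naturality squares for $\mathrm{ev}_C$, $r$, $l$ line up. There is no conceptual difficulty — everything follows from coassociativity, counitality, and the uniqueness half of the pre-rigidity bijection — but the diagram-chase is delicate precisely because $\mathcal{M}$ is only assumed monoidal (not strict), so one cannot be cavalier about where the constraints sit. A clean way to organise the argument is to note that $\mathrm{ev}_C\circ(C\otimes-)$ is a natural bijection $\mathcal{M}(T,\pred C)\xrightarrow{\ \sim\ }\mathcal{M}(C\otimes T,\mathbf 1)$, and that under this bijection the formula \eqref{form:dualmult} exhibits $m_\pred{C}$ as the image of a composite that is manifestly associative at the level of $\mathcal M(C\otimes(\pred C)^{\otimes 3},\mathbf 1)$; transporting associativity back across the natural isomorphism finishes the proof.
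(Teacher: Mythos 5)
Your plan is correct and follows essentially the same route as the paper: define $m_{\pred{C}}$ and $u_{\pred{C}}$ via the uniqueness half of the pre-rigidity bijection, then verify associativity and the unit laws by applying $\mathrm{ev}_C\circ(C\otimes -)$, reducing everything to coassociativity and counitality of $\Delta_C$ together with naturality of the unit constraints, and finally transporting back across the bijection. The paper's proof is exactly this argument carried out in full diagrammatic detail, including the constraint bookkeeping you correctly flag as the only delicate point.
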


\begin{proof}
By the bijections above there are unique morphisms $m_\pred{C}$ and $u_{%
\pred{C}}$ defined by the equalities in the statement. Let us check the
associativity. We compute%
\begin{eqnarray*}
&&\mathrm{ev}_{C}\circ \left( C\otimes m_\pred{C}\right) \circ \left(
C\otimes m_\pred{C}\otimes \pred{C}\right) \\
&=&\mathrm{ev}_{C}\circ \left( r_{C}\otimes \pred{C}\right) \circ \left(
C\otimes \mathrm{ev}_{C}\otimes \pred{C}\right) \circ \left( \Delta
_{C}\otimes \pred{C}\otimes \pred{C}\right) \circ \left( C\otimes m_{{%
\pred{C}}}\otimes \pred{C}\right) \\
&=&\mathrm{ev}_{C}\circ \left( r_{C}\otimes \pred{C}\right) \circ \left(
C\otimes \mathrm{ev}_{C}\otimes \pred{C}\right) \circ \left( C\otimes
C\otimes m_\pred{C}\otimes \pred{C}\right) \circ \left( \Delta
_{C}\otimes \pred{C}\otimes \pred{C}\otimes \pred{C}\right) \\
&=&\mathrm{ev}_{C}\circ \left( r_{C}\otimes \pred{C}\right) \circ \left(
C\otimes \mathrm{ev}_{C}\otimes \pred{C}\right) \circ \left( C\otimes
r_{C}{\otimes \pred{C}}\otimes \pred{C}\right) \circ \left( C\otimes
C\otimes \mathrm{ev}_{C}\otimes \pred{C}\otimes \pred{C}\right) \\
&&\circ \left( C\otimes \Delta _{C}\otimes \pred{C}\otimes \pred{C}%
\otimes \pred{C}\right) \circ \left( \Delta _{C}\otimes \pred{C}%
\otimes \pred{C}\otimes \pred{C}\right) \\
&=&\mathrm{ev}_{C}\circ \left( r_{C}\otimes \pred{C}\right) \circ \left(
C\otimes \mathrm{ev}_{C}\otimes \pred{C}\right) \circ \left( C\otimes
r_{C}{\otimes \pred{C}}\otimes \pred{C}\right) \circ \left( C\otimes
C\otimes \mathrm{ev}_{C}\otimes \pred{C}\otimes \pred{C}\right) \\
&&\circ \left( \Delta _{C}\otimes C\otimes \pred{C}\otimes \pred{C}%
\otimes \pred{C}\right) \circ \left( \Delta _{C}\otimes \pred{C}%
\otimes \pred{C}\otimes \pred{C}\right) \\
&=&\mathrm{ev}_{C}\circ \left( r_{C}\otimes \pred{C}\right) \circ \left(
C\otimes \mathrm{ev}_{C}\otimes \pred{C}\right) \circ \left( r_{C\otimes
C}{\otimes \pred{C}}\otimes \pred{C}\right) \circ \left( \Delta
_{C}\otimes \mathbf{1}\otimes \pred{C}\otimes \pred{C}\right) \\
&&\circ \left( C\otimes \mathrm{ev}_{C}\otimes \pred{C}\otimes \pred{C}%
\right) \circ \left( \Delta _{C}\otimes \pred{C}\otimes \pred{C}%
\otimes \pred{C}\right) \\
&=&\mathrm{ev}_{C}\circ \left( r_{C}\otimes \pred{C}\right) \circ \left(
C\otimes \mathrm{ev}_{C}\otimes \pred{C}\right) \circ \left( \Delta
_{C}\otimes \pred{C}\otimes \pred{C}\right) \circ \left( r_{C}{\otimes
\pred{C}}\otimes \pred{C}\right) \\
&&\circ \left( C\otimes \mathrm{ev}_{C}\otimes \pred{C}\otimes \pred{C}%
\right) \circ \left( \Delta _{C}\otimes \pred{C}\otimes \pred{C}%
\otimes \pred{C}\right) \\
&=&\mathrm{ev}_{C}\circ \left( C\otimes m_\pred{C}\right) \circ \left(
r_{C}\otimes \pred{C}\otimes \pred{C}\right) \circ \left( C\otimes
\mathrm{ev}_{C}\otimes \pred{C}\otimes \pred{C}\right) \circ \left(
\Delta _{C}\otimes \pred{C}\otimes \pred{C}\otimes \pred{C}\right)
\\
&=&\mathrm{ev}_{C}\circ \left( r_{C}\otimes \pred{C}\right) \circ \left(
C\otimes \mathrm{ev}_{C}\otimes \pred{C}\right) \circ \left( \Delta
_{C}\otimes \pred{C}\otimes \pred{C}\right) \circ \left( C\otimes {%
\pred{C}}\otimes m_\pred{C}\right) \\
&=&\mathrm{ev}_{C}\circ \left( C\otimes m_\pred{C}\right) \circ \left(
C\otimes \pred{C}\otimes m_\pred{C}\right)
\end{eqnarray*}%
so that $m_\pred{C}\circ \left( m_\pred{C}\otimes \pred{C}%
\right) =m_\pred{C}\circ \left( \pred{C}\otimes m_{\pred{C}%
}\right) .$ Let us check that $m_\pred{C}$ is unitary. We compute%
\begin{eqnarray*}
&&\mathrm{ev}_{C}\circ \left( C\otimes m_\pred{C}\right) \circ \left(
C\otimes u_\pred{C}\otimes \pred{C}\right) \circ \left( C\otimes l_{{%
\pred{C}}}^{-1}\right) \\
&=&\mathrm{ev}_{C}\circ \left( r_{C}\otimes \pred{C}\right) \circ \left(
C\otimes \mathrm{ev}_{C}\otimes \pred{C}\right) \circ \left( \Delta
_{C}\otimes \pred{C}\otimes \pred{C}\right) \circ \left( C\otimes u_{{%
\pred{C}}}\otimes \pred{C}\right) \circ \left( C\otimes l_{\pred{C}%
}^{-1}\right) \\
&=&\mathrm{ev}_{C}\circ \left( r_{C}\otimes \pred{C}\right) \circ \left(
C\otimes \mathrm{ev}_{C}\otimes \pred{C}\right) \circ \left( C\otimes
C\otimes u_\pred{C}\otimes \pred{C}\right) \circ \left( \Delta
_{C}\otimes \mathbf{1}\otimes \pred{C}\right) \circ \left( C\otimes l_{{%
\pred{C}}}^{-1}\right) \\
&=&\mathrm{ev}_{C}\circ \left( r_{C}\otimes \pred{C}\right) \circ \left(
C\otimes \varepsilon _{C}\otimes \pred{C}\right) \circ \left( C\otimes r_{%
{C}}\otimes \pred{C}\right) \circ \left( \Delta _{C}\otimes \mathbf{1}%
\otimes \pred{C}\right) \circ \left( C\otimes l_\pred{C}^{-1}\right)
\\
&=&\mathrm{ev}_{C}\circ \left( r_{C}\otimes \pred{C}\right) \circ \left(
C\otimes \varepsilon _{C}\otimes \pred{C}\right) \circ \left( C\otimes
C\otimes l_\pred{C}\right) \circ \left( \Delta _{C}\otimes \mathbf{1}%
\otimes \pred{C}\right) \circ \left( C\otimes l_\pred{C}^{-1}\right)
\\
&=&\mathrm{ev}_{C}\circ \left( r_{C}\otimes \pred{C}\right) \circ \left(
C\otimes \varepsilon _{C}\otimes \pred{C}\right) \circ \left( \Delta
_{C}\otimes \pred{C}\right) \circ \left( C\otimes l_\pred{C}\right)
\circ \left( C\otimes l_\pred{C}^{-1}\right) \\
&=&\mathrm{ev}_{C}=\mathrm{ev}_{C}\circ \left( C\otimes \mathrm{Id}_\pred{C}\right)
\end{eqnarray*}

so that $m_\pred{C}\circ \left( u_\pred{C}\otimes \pred{C}%
\right) \circ l_\pred{C}^{-1}=\mathrm{Id}_\pred{C}.$ Similarly%
\begin{eqnarray*}
&&\mathrm{ev}_{C}\circ \left( C\otimes m_\pred{C}\right) \circ \left(
C\otimes \pred{C}\otimes u_\pred{C}\right) \circ \left( C\otimes r_{{%
\pred{C}}}^{-1}\right) \\
&=&\mathrm{ev}_{C}\circ \left( r_{C}\otimes \pred{C}\right) \circ \left(
C\otimes \mathrm{ev}_{C}\otimes \pred{C}\right) \circ \left( \Delta
_{C}\otimes \pred{C}\otimes \pred{C}\right) \circ \left( C\otimes {%
\pred{C}}\otimes u_\pred{C}\right) \circ \left( C\otimes r_{\pred{C}%
}^{-1}\right) \\
&=&\mathrm{ev}_{C}\circ \left( C\otimes u_\pred{C}\right) \circ \left(
r_{C}\otimes \mathbf{1}\right) \circ \left( C\otimes \mathrm{ev}_{C}\otimes
\mathbf{1}\right) \circ \left( \Delta _{C}\otimes \pred{C}\otimes \mathbf{%
1}\right) \circ r_{C\otimes \pred{C}}^{-1} \\
&=&\varepsilon _{C}\circ r_{{C}}\circ \left( r_{C}\otimes \mathbf{1}\right)
\circ \left( C\otimes \mathrm{ev}_{C}\otimes \mathbf{1}\right) \circ \left(
\Delta _{C}\otimes \pred{C}\otimes \mathbf{1}\right) \circ r_{C\otimes {%
\pred{C}}}^{-1} \\
&=&\varepsilon _{C}\circ r_{{C}}\circ \left( C\otimes \mathrm{ev}_{C}\right)
\circ \left( \Delta _{C}\otimes \pred{C}\right) \circ r_{C\otimes
{\pred{C}}}\circ r_{C\otimes \pred{C}}^{-1} \\
&=&r_{\mathbf{1}}\circ \left( \varepsilon _{C}\otimes \mathbf{1}\right)
\circ \left( C\otimes \mathrm{ev}_{C}\right) \circ \left( \Delta _{C}\otimes
\pred{C}\right) \\
&=&l_{\mathbf{1}}\circ \left( \mathbf{1}\otimes \mathrm{ev}_{C}\right) \circ
\left( \varepsilon _{C}\otimes C\otimes \pred{C}\right) \circ \left(
\Delta _{C}\otimes \pred{C}\right) \\
&=&\mathrm{ev}_{C}\circ l_{C\otimes \pred{C}}\circ \left( \varepsilon
_{C}\otimes C\otimes \pred{C}\right) \circ \left( \Delta _{C}\otimes {%
\pred{C}}\right) \\
&=&\mathrm{ev}_{C}\circ \left( l_{C}\otimes \pred{C}\right) \circ \left(
\varepsilon _{C}\otimes C\otimes \pred{C}\right) \circ \left( \Delta
_{C}\otimes \pred{C}\right) =\mathrm{ev}_{C}=\mathrm{ev}_{C}\circ \left(
C\otimes \mathrm{Id}_\pred{C}\right)
\end{eqnarray*}%
so that $m_\pred{C}\circ \left( \pred{C}\otimes u_{\pred{C}%
}\right) \circ r_\pred{C}^{-1}=\mathrm{Id}_\pred{C}$.
\end{proof}

In order to attach a right $\pred{C}$-module to any right $C$-comodule, as in the classical setting, notice that for every $M\in \mathcal{M}$ 
we can consider the morphism%
\begin{equation}
\mathcal{M}\left( M,M\otimes C\right) \rightarrow \mathcal{M}\left( M\otimes
\pred{C},M\right),\qquad\rho \mapsto \mu _{\rho }  \label{form:defpurho1}
\end{equation}%
where $\mu _{\rho }$ denotes the composition
\begin{equation*}
M\otimes \pred{C} \xrightarrow{\rho \otimes \pred{C}} M\otimes
C\otimes \pred{C} \xrightarrow{M\otimes \mathrm{ev}_{C}} M\otimes
\mathbf{1} \xrightarrow{r_{M}} M.
\end{equation*}
Recall also that a functor $F \colon \mathcal{C} \to \mathcal{D}$ is \emph{conservative} if it is reflects isomorphisms, i.e., if $g \colon X \to Y$ is a morphism in $\mathcal{C}$ such that $F(g)$ is an isomorphism in $\mathcal{D}$, then $g$ is an isomorphism in $\mathcal{C}$.

\begin{proposition}
\label{pro:L}
Let $\left( \mathcal{M}%
,\otimes ,\mathbf{1}\right) $ be a monoidal category and let $%
\left( C,\Delta _{C},\varepsilon _{C}\right) $ be a coalgebra in $\mathcal{M}
$. Assume that $C\ $is left pre-rigid with left pre-dual $\pred{C}$.
Then we have a conservative and faithful functor $$L:\mathcal{M}^{C}\rightarrow {\mathcal{M}_\pred{C}}%
,\qquad\left( {N},\rho \right) \mapsto \left( {N},\mu _{\rho }\right),\qquad f\mapsto f .$$
Moreover, if all the following conditions hold:
\begin{enumerate}[label=\arabic*)]
\item the functor $\left( -\right) \otimes \pred{C}:\mathcal{M} \rightarrow \mathcal{M}$ preserves colimits (e.g.\ it is a left adjoint),
\item the functor $\left( -\right) \otimes C:\mathcal{M} \rightarrow \mathcal{M}$ preserves filtered colimits (e.g.\ it is a left adjoint) and
\item $\Mm$ is locally presentable,
\end{enumerate}
then $L:\Mm^{C}\rightarrow \Mm_\pred{C}$ has a right adjoint $R$ and $L$ is comonadic.
\end{proposition}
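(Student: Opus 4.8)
The plan is to treat the two assertions separately: first that $L$ is a well-defined faithful and conservative functor (no extra hypotheses needed), then, under 1)--3), that $L$ admits a right adjoint and is comonadic.

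\textbf{The functor $L$.} For a right $C$-comodule $(N,\rho)$ one must check that $\mu_\rho$ from \eqref{form:defpurho1} makes $N$ a right module over the algebra $\pred{C}$ of \cref{lem:*algebra}. The unit axiom $\mu_\rho\circ\left(N\otimes u_\pred{C}\right)=r_N$ follows by unwinding the definition of $\mu_\rho$, inserting \eqref{form:dualun}, and using the counit axiom $r_N\circ\left(N\otimes\varepsilon_C\right)\circ\rho=\mathrm{Id}_N$ of $\rho$. The associativity axiom $\mu_\rho\circ\left(\mu_\rho\otimes\pred{C}\right)=\mu_\rho\circ\left(N\otimes m_\pred{C}\right)$ is a direct string-diagram computation: one expands both sides via \eqref{form:defpurho1}, rewrites the right-hand occurrence of $m_\pred{C}$ by means of \eqref{form:dualmult}, and reconciles the two sides using the coassociativity $\left(\rho\otimes C\right)\circ\rho=\left(N\otimes\Delta_C\right)\circ\rho$ of $\rho$ together with naturality of the constraints --- exactly the kind of bookkeeping carried out in the proof of \cref{lem:*algebra}. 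Functoriality is immediate: a $C$-colinear $f$ is automatically $\pred{C}$-linear since $\mu_\rho$ is natural in $\rho$, and $L$ acts as the identity on underlying morphisms, whence it is faithful. For conservativity, observe that $U_{\pred{C}}\circ L=U^{C}$, where $U^{C}\colon\mathcal{M}^{C}\to\mathcal{M}$ and $U_{\pred{C}}\colon\mathcal{M}_{\pred{C}}\to\mathcal{M}$ are the forgetful functors; since an isomorphism in $\mathcal{M}$ which is colinear has a colinear inverse, $U^{C}$ is conservative, so if $L(f)$ is invertible then so is $U^{C}(f)$, hence so is $f$ in $\mathcal{M}^{C}$.

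\textbf{Right adjoint.} Assume now 1)--3). The key point is that $\mathcal{M}^{C}$ is exactly the Eilenberg--Moore category of the comonad $G\coloneqq\left(-\right)\otimes C$ (comultiplication and counit induced by $\Delta_C$ and $\varepsilon_C$), so $U^{C}$ is comonadic, while $\mathcal{M}_{\pred{C}}$ is the Eilenberg--Moore category of the monad $T\coloneqq\left(-\right)\otimes\pred{C}$ (multiplication and unit from \cref{lem:*algebra}), so $U_{\pred{C}}$ is monadic; both are therefore conservative and $U_{\pred{C}}\circ L=U^{C}$. Dually to the fact that the forgetful functor of a monad creates limits, $U^{C}$ creates all colimits of $\mathcal{M}$, hence $\mathcal{M}^{C}$ is cocomplete; and since by 1) the monad $T$ preserves colimits, $U_{\pred{C}}$ creates colimits as well. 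Consequently $L$ preserves all colimits: a colimit cocone in $\mathcal{M}^{C}$ is carried by $U^{C}=U_{\pred{C}}\circ L$ to a colimit cocone in $\mathcal{M}$, and since $U_{\pred{C}}$ creates colimits, its $L$-image --- being a cocone lifting this colimit --- must itself be a colimit cocone in $\mathcal{M}_{\pred{C}}$. Now $\mathcal{M}$ is locally presentable by 3); by 1) the monad $T$ is accessible, so $\mathcal{M}_{\pred{C}}$ is locally presentable; by 2) the comonad $G$ preserves filtered colimits, so $G$ is accessible, and combined with cocompleteness of $\mathcal{M}^{C}$ and the fact that the category of coalgebras of an accessible comonad over a locally presentable category is again locally presentable (its accessibility being an inserter--equifier construction out of $\mathcal{M}$ and $G$), $\mathcal{M}^{C}$ is locally presentable. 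A colimit-preserving functor between locally presentable categories is a left adjoint, so $L$ admits a right adjoint $R$.

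\textbf{Comonadicity.} I would conclude via Beck's comonadicity theorem: $L$ has a right adjoint and is conservative, so it remains to see that $\mathcal{M}^{C}$ has equalizers of $L$-split parallel pairs and that $L$ preserves them. Given such a pair $(f,g)$ in $\mathcal{M}^{C}$, applying $U_{\pred{C}}$ and using that split equalizers are absolute shows that $(U^{C}f,U^{C}g)=(U_{\pred{C}}Lf,U_{\pred{C}}Lg)$ is $U^{C}$-split; since $U^{C}$ is comonadic, the equalizer $e$ of $(f,g)$ exists in $\mathcal{M}^{C}$ and is preserved by $U^{C}$, so that $U_{\pred{C}}(Le)=U^{C}(e)$ is the corresponding (absolute) split equalizer in $\mathcal{M}$. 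Because $U_{\pred{C}}$ is monadic it creates limits, hence this split equalizer cone lifts uniquely to the equalizer of $(Lf,Lg)$ in $\mathcal{M}_{\pred{C}}$; as $Le$ is a cone over $(Lf,Lg)$ lifting it, $Le$ is that equalizer, i.e.\ $L$ preserves it. Thus $L$ is comonadic, with associated comonad $LR$ on $\mathcal{M}_{\pred{C}}$. The only genuinely non-formal ingredient in this whole argument is the local presentability of $\mathcal{M}^{C}$ --- concretely, that hypothesis 2) forces the category of $G$-coalgebras to be accessible; everything else is the general machinery of Eilenberg--Moore categories, the adjoint functor theorem, and Beck's theorem, plus the string-diagram verification of the first part.
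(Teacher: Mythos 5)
Your proof is correct, and it reaches the conclusion by a genuinely more self-contained route than the paper. The paper's own proof is essentially a citation: it observes that $(-\otimes\pred{C},\,-\otimes C,\,-\otimes\ev_C)$ is a \emph{pairing} between the monad $-\otimes\pred{C}$ and the comonad $-\otimes C$ in the sense of Mesablishvili--Wisbauer (the commutativity of their defining diagram being exactly \eqref{form:dualmult} and \eqref{form:dualun}), identifies $L$ with the induced functor $\Phi^{\mathcal{P}}$, notes faithfulness because $L$ is the identity on morphisms, and quotes their \S 3.7 for the right adjoint and the comonadicity; the only ingredient the paper isolates explicitly (see the proof of \cref{prop:HopfModLocPres}) is the adaptation of Ad\'amek--Rosick\'y showing that the Eilenberg--Moore category of a filtered-colimit-preserving comonad on a locally presentable category is locally presentable. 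Your argument unpacks what that citation hides: the direct verification that $\mu_\rho$ is a $\pred{C}$-action, the factorisation $U_{\pred{C}}\circ L=U^{C}$ giving conservativity (a point the paper's written proof does not even address explicitly), colimit preservation of $L$ via creation of colimits by both forgetful functors, local presentability of $\Mm^{C}$ and $\Mm_{\pred{C}}$, the adjoint functor theorem for locally presentable categories, and Beck's comonadicity theorem. Your version buys transparency --- in particular it makes visible exactly where each of the hypotheses 1)--3) enters --- at the cost of length; the paper's version buys brevity and the reusability of the general monad--comonad pairing formalism. The one statement you use without proof, namely that the category of coalgebras of an accessible comonad on a locally presentable category is again locally presentable, is precisely the non-formal step the paper also singles out, so identifying it as the crux is accurate rather than a gap.
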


\begin{proof} 
$\pred{C}$ is an algebra in $\mathcal{M}$ by \cref{lem:*algebra}, hence $- \otimes \pred{C}$ is a monad on $\mathcal{M}$. The evaluation $\text{ev}_C \colon C \otimes \pred{C} \to \mathbf{1}$ induces a natural transformation $- \otimes \text{ev}_C \colon - \otimes C \otimes \pred{C} \to \id_{\mathcal{M}}$ which makes $\mathcal{P} \coloneqq (- \otimes \pred{C}, - \otimes C, - \otimes \text{ev}_C)$ a pairing between the monad $- \otimes \pred{C}$ and the comonad $- \otimes C$ in the sense of \cite[\S3.2]{MesaWis}: formulas \eqref{form:dualmult} and \eqref{form:dualun} entail the commutativity of \cite[(3.1)]{MesaWis}.
\begin{invisible}
In fact, the pentagon in \cite[(3.1)]{MesaWis} commutes if and only if
\[(Y \otimes \text{ev}_C) \circ (f \otimes \pred{C}) \circ (X \otimes m_{\pred{C}}) = (Y \otimes \text{ev}_C) \circ (Y \otimes C \otimes \text{ev}_C \otimes \pred{C}) \circ (Y \otimes \Delta \otimes \pred{C} \otimes \pred{C}) \circ (f \otimes \pred{C} \otimes \pred{C})\]
and the triangle commutes if and only if
\[(Y \otimes \text{ev}_C) \circ (f \otimes \pred{C}) \circ (X \otimes u_{\pred{C}}) = (T \otimes \varepsilon) \circ f.\]
\end{invisible}%
Then the functor $L$ of the statement is exactly the functor $\Phi^{\mathcal{P}}$ of \cite[\S3.6]{MesaWis} and it is faithful as it acts as the identity on morphisms.
Moreover, the last claim follows directly from \cite[\S3.7]{MesaWis}. 
\end{proof}

Recall that a monoidal category is called \emph{biclosed} if it is both left and right closed.

\begin{corollary}\label{cor:Lbiclosed} 
Let $\left( \mathcal{M},\otimes ,\mathbf{1}\right) $ be a biclosed monoidal category and let $%
\left( C,\Delta _{C},\varepsilon _{C}\right) $ be a coalgebra in $\mathcal{M}$. If $\Mm$ is locally presentable, then the functor $L:\Mm^{C}\rightarrow \Mm_\pred{C}$ has a right adjoint $R$.
\end{corollary}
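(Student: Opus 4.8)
The plan is to deduce this as an immediate specialisation of \cref{pro:L}, so the whole argument reduces to checking that the hypotheses of that proposition are satisfied in a biclosed, locally presentable monoidal category. First I would observe that since $\mathcal{M}$ is left closed, the functor $C \otimes (-)$ admits a right adjoint $[C,-]$, so by \cref{rmk:closed=>prig} the coalgebra $C$ is a left pre-rigid object, with left pre-dual $\pred{C} = [C,\mathbf{1}]$ and evaluation $\ev_C \colon C \otimes \pred{C} \to \mathbf{1}$ taken to be the counit of $C \otimes (-) \dashv [C,-]$ evaluated at $\mathbf{1}$. This is precisely the input needed for $\pred{C}$ to carry the algebra structure of \cref{lem:*algebra} and for the functor $L \colon \mathcal{M}^{C} \to \mathcal{M}_{\pred{C}}$ of \cref{pro:L} to be defined.

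Next I would verify conditions 1), 2) and 3) of \cref{pro:L}. Because $\mathcal{M}$ is right closed, $(-) \otimes X$ has a right adjoint for every object $X$ of $\mathcal{M}$, hence preserves all colimits. Taking $X = \pred{C}$ gives condition 1) (indeed $(-) \otimes \pred{C}$ is even a left adjoint), and taking $X = C$ gives condition 2) a fortiori, since preservation of all colimits entails preservation of filtered ones. Condition 3) is nothing but the standing hypothesis that $\mathcal{M}$ is locally presentable. Thus all three assumptions hold, and \cref{pro:L} delivers the right adjoint $R$ of $L$ (and, as a by-product not claimed here, the comonadicity of $L$).

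The only point demanding any care is the bookkeeping of the two closed structures: the notion of left pre-rigidity and the construction of $\pred{C}$ rely on the left-closed structure $X \otimes (-) \dashv [X,-]$, whereas the colimit-preservation of the endofunctors $(-) \otimes \pred{C}$ and $(-) \otimes C$ rests on the right-closed structure; biclosedness of $\mathcal{M}$ is exactly what provides both simultaneously. Beyond keeping the sides straight, there is no genuine obstacle, and the proof is essentially a one-line invocation of \cref{pro:L}.
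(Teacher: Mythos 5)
Your proposal is correct and follows exactly the paper's own argument: left-closedness gives the pre-rigidity of $C$ via \cref{rmk:closed=>prig}, right-closedness makes $(-)\otimes\pred{C}$ and $(-)\otimes C$ left adjoints (hence colimit-preserving), and local presentability supplies the last hypothesis of \cref{pro:L}. The extra remark on keeping the two closed structures separate is accurate but not a point where the paper's proof differs.
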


\begin{proof} 
Since $\mathcal{M}$ is left closed, $C$ is left pre-rigid with left pre-dual $\pred{C}$. The property of being right closed implies that $(-)\otimes\pred{C}:\mathcal{M}\to\mathcal{M}$ and $-\otimes C \colon \mathcal{M} \to \mathcal{M}$ are left adjoints and so they preserve colimits and we can apply the second part of \cref{pro:L}.
\end{proof}

Having worked through the necessary groundwork, we're now ready to progress from Hopf algebras to bialgebras.

\subsection{The bialgebra case}\label{sec:bialg}

Let $B$ be a bialgebra with multiplication $m$, unit $u$, comultiplication $\Delta$ and counit $\varepsilon$. The category $\Mm={\mathfrak{M}_B}$ of right $B$-modules is monoidal with tensor product $\otimes =\otimes
_{\Bbbk }$ and unit object $\Bbbk $. Moreover $\mathfrak{M}_{B}$  is biclosed, i.e.~it is both  left and right closed (see e.g. \cite[Lemma 1.5]{Saracco-PreFrob}).
Explicitly, given an object $X$ in $\mathfrak{M}_{B}$, the right adjoint of $X\otimes(-):\mathfrak{M}_{B}\to\mathfrak{M}_{B}$ is given by
\[[X,-]\coloneqq \mathfrak{M}_{B}(X_\bullet\otimes B_\bullet, -)\]
where the two bullets indicate the diagonal action. For every $X$ and $Y$ in $\mathfrak{M}_{B}$,
the right $B$-module structure of $[X,Y]$ is given by $(fa)(x\otimes b)=f(x\otimes ab)$, for every $f\in [X,Y],x\in X,a,b\in B$.
The bijection that yields the adjunction is
\[\Phi_{Y,Z}:\mathfrak{M}_{B}(X_\bullet\otimes Y_\bullet,Z)\to \mathfrak{M}_{B}(Y,[X,Z])\]
where $\Phi_{Y,Z}(f)(y)(x\otimes b)=f(x\otimes yb)$ for every $f\in \mathfrak{M}_{B}(X\otimes Y,Z), x\in X, y\in Y,b\in B$ while
$\Phi_{Y,Z}^{-1}(g)(x\otimes y)=g(y)(x\otimes 1)$ for every $g\in \mathfrak{M}_{B}(Y,[X,Z]), x\in X, y\in Y$.

The unit of the adjunction is $\eta_Y\coloneqq \Phi_{Y,X\otimes Y}(\id_{X\otimes Y})$ i.e.
\[\eta_Y:Y\to [X,X\otimes Y],\qquad y\mapsto\left[x\otimes b\mapsto x\otimes yb\right].\]
The counit of the adjunction is $\epsilon_Z\coloneqq \Phi_{[X,Z],Z}^{-1}(\id_{[X,Z]})$ i.e.
\[\epsilon_Z:X\otimes [X,Z]\to Z\qquad x\otimes f\mapsto f(x\otimes 1).\]

In view of \cref{rmk:closed=>prig}, the left closedness of $\mathfrak{M}_{B}$ implies that $\mathfrak{M}_{B}$ is left pre-rigid with left pre-dual given by
\[\pred{X}\coloneqq [X,\Bbbk]=\mathfrak{M}_{B}(X_\bullet\otimes B_\bullet, \Bbbk).\]
The evaluation is $\ev_X\coloneqq \Phi_{{\pred{X}},\Bbbk}^{-1}(\id_{\pred{X}})=\epsilon_\Bbbk$ i.e.  $$\ev_X:X\otimes {}\pred{X}\to\Bbbk,\quad x\otimes f\mapsto f(x\otimes 1).$$

We now derive, in the present setting, the algebraic structure of the pre-dual obtained in  \cref{lem:*algebra} and the related functor $L$ of \cref{pro:L}.

\begin{proposition}
\label{prop:predBalg}
Let $B$ be a bialgebra. Then the pre-dual $\pred{B}=\mathfrak{M}_{B}(B_\bullet\otimes B_\bullet, \Bbbk)$ becomes a right $B$-module algebra where, given $f,g\in{}\pred{B},a,b\in B$, we have
\begin{equation}\label{eq:algstruct}
(f \conv g)(a\otimes b) = f(a_2\otimes b_1)g(a_1\otimes b_2),\qquad
1_{\pred{B}}(a\otimes b)=\varepsilon_B(a)\varepsilon_B(b).
\end{equation}
Moreover the functor
\begin{equation}\label{eq:functorL}
L \colon \mathfrak{M}_{B}^{B}\rightarrow (\mathfrak{M}_{B})_{\pred{B}},\qquad (N,\rho) \mapsto (N,\mu _{\rho }),
\end{equation}
has a right adjoint. Here  $n \triangleleft f\coloneqq  \mu_{\rho}(n\otimes f)= n_0f(n_1\otimes 1)$, for every $n\in N$ and $f\in {\pred{B}}$.
\end{proposition}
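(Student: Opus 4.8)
The plan is to obtain the statement by specialising the constructions of \cref{ssec:cats} to the biclosed monoidal category $\Mm=\mathfrak{M}_B$ and the coalgebra $C=B$. First I would observe that the regular right $B$-module $B$, together with $\Delta$ and $\varepsilon$, is a coalgebra in $\mathfrak{M}_B$: the map $\Delta\colon B\to B\otimes B$ is right $B$-linear for the diagonal action precisely because $\Delta$ is multiplicative, $\varepsilon\colon B\to\Bbbk$ is right $B$-linear for the $\varepsilon$-action on the unit object $\Bbbk$ precisely because $\varepsilon$ is multiplicative, and coassociativity and counitality are inherited from $\mathfrak{M}$. Since $\mathfrak{M}_B$ is biclosed and locally presentable (being the category of modules over the $\Bbbk$-algebra $B$), \cref{cor:Lbiclosed} applies with this $C$: by \cref{lem:*algebra} the left pre-dual $\pred{B}=[B,\Bbbk]=\mathfrak{M}_B(B_\bullet\otimes B_\bullet,\Bbbk)$ acquires an algebra structure in $\mathfrak{M}_B$ — which is the same thing as a right $B$-module algebra structure, an algebra in $\mathfrak{M}_B$ being precisely an algebra whose multiplication and unit are right $B$-linear for the diagonal and $\varepsilon$-actions — and by \cref{pro:L} the associated functor $L\colon\mathfrak{M}_B^B\to(\mathfrak{M}_B)_{\pred{B}}$ admits a right adjoint. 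It then remains only to make the formulas explicit.

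To describe $m_{\pred{B}}$ and $u_{\pred{B}}$, I would substitute into \eqref{form:dualmult} and \eqref{form:dualun} the explicit evaluation $\ev_B\colon x\otimes h\mapsto h(x\otimes 1)$, the unit constraint $r$, and $\Delta_C=\Delta$. Evaluating \eqref{form:dualmult} on $a\otimes f\otimes g$ then gives $(f\conv g)(a\otimes 1)=f(a_2\otimes 1)\,g(a_1\otimes 1)$, and \eqref{form:dualun} gives $1_{\pred{B}}(a\otimes 1)=\varepsilon_B(a)$, where $1_{\pred{B}}=u_{\pred{B}}(1)$. By the pre-rigidity bijection a morphism into $\pred{B}$ is determined by its composite with $\ev_B$, so these identities — which only constrain $f\conv g$ and $1_{\pred{B}}$ on arguments of the form $a\otimes 1$ — already determine $m_{\pred{B}}$ and $u_{\pred{B}}$; to read off their values on a general $a\otimes b$ I would then invoke right $B$-linearity of $m_{\pred{B}}$ and $u_{\pred{B}}$. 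Concretely, $B$-linearity of $m_{\pred{B}}$ for the diagonal action on $\pred{B}\otimes\pred{B}$ yields $(f\conv g)b=(fb_1)\conv(gb_2)$, and combining this with the module structure $(hb)(a\otimes c)=h(a\otimes bc)$ of $\pred{B}=[B,\Bbbk]$ gives $(f\conv g)(a\otimes b)=\big((f\conv g)b\big)(a\otimes 1)=(fb_1)(a_2\otimes 1)\,(gb_2)(a_1\otimes 1)=f(a_2\otimes b_1)\,g(a_1\otimes b_2)$, which is the first formula in \eqref{eq:algstruct}; likewise $1_{\pred{B}}b=\varepsilon_B(b)\,1_{\pred{B}}$ gives $1_{\pred{B}}(a\otimes b)=\varepsilon_B(a)\varepsilon_B(b)$.

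It remains to identify the action $\mu_\rho$. For $(N,\rho)\in\mathfrak{M}_B^B$, the definition \eqref{form:defpurho1} reads $\mu_\rho=r_N\circ(N\otimes\ev_B)\circ(\rho\otimes\pred{B})$, so that $n\otimes f\mapsto n_0f(n_1\otimes 1)$, i.e.\ $n\triangleleft f=n_0f(n_1\otimes 1)$ as claimed, and the existence of the right adjoint is the content of \cref{cor:Lbiclosed}. The one point that needs a little attention is the passage from the value of a morphism on arguments $a\otimes 1$ to its value on a general $a\otimes b$ (used twice above); everything else is a direct substitution into the results of \cref{ssec:cats}.
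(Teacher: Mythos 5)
Your proposal is correct and follows essentially the same route as the paper's own proof: apply \cref{lem:*algebra} and \cref{pro:L}/\cref{cor:Lbiclosed} to the coalgebra $B$ in the biclosed, locally presentable category $\mathfrak{M}_B$, determine $f\conv g$ and $1_{\pred{B}}$ first on arguments $a\otimes 1$ via the pre-rigidity bijection, and then extend to general $a\otimes b$ by right $B$-linearity of $m_{\pred{B}}$ and $u_{\pred{B}}$. The only cosmetic difference is that you cite local presentability of $\mathfrak{M}_B$ as standard, whereas the paper spells it out via monadicity of the forgetful functor; both are fine.
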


\begin{proof} 
First of all, since $B$ is clearly a coalgebra in $\mathfrak{M}_{B}$, we can apply \cref{lem:*algebra} to obtain a right $B$-module algebra structure on the pre-dual $\pred{B}$.
For $f,g\in{}\pred{B}$, we set $f \conv g \coloneqq m_{\pred{B}}(f\otimes g)$. Let us show this is explicitly given by the left-hand side of \eqref{eq:algstruct}. In fact, $m_{\pred{B}}$ is the unique right $B$-linear morphism defined by \eqref{form:dualmult}, by construction. The latter means that we have $\ev_B(a\otimes (f \conv g)) = \ev_B(a_1\otimes g)\ev_B(a_2\otimes f)$ for every $a\in B$ or, equivalently, $(f \conv g)(a\otimes 1)=g(a_1\otimes 1)f(a_2\otimes 1)$.
Now, right $B$-linearity of $m_{\pred{B}}$ entails that
\begin{align*}
    (f \conv g)(a\otimes b) & =((f \conv g)b)(a\otimes 1)=\big(\left(fb_1\right) \conv \left(gb_2\right)\big)(a\otimes 1) \\
    & = (fb_1)(a_2\otimes 1)(gb_2)(a_1\otimes 1) = f(a_2\otimes b_1)g(a_1\otimes b_2),
\end{align*}
which is what we were aiming for.
Similarly $1_{\pred{B}}$ is uniquely defined by \eqref{form:dualun} i.e.\ $\ev_B(b\otimes 1_{\pred{B}})=\varepsilon_B(b)$, that is $1_{\pred{B}}(b\otimes 1)=\varepsilon_B(b)$. Right $B$-linearity, in turn, entails that $1_{\pred{B}}(a\otimes b)=(1_{\pred{B}}b)(a\otimes 1)=1_{\pred{B}}(a\otimes 1)\varepsilon_B(b)=\varepsilon_B(a)\varepsilon_B(b)$.

Secondly, we can apply \cref{pro:L} to find the functor $L:\mathfrak{M}_{B}^{B}\rightarrow (\mathfrak{M}_{B})_{\pred{B}}$ which explicitly acts by $L: (N,\rho) \mapsto (N,\mu _{\rho }),$
where  $\mu_{\rho}$ is given by $n \triangleleft f = \mu_{\rho}(n\otimes f)=n_0\ev_B(n_1\otimes f)=n_0f(n_1\otimes 1)$ for every $n\in N$ and $f\in {\pred{B}}$.

Finally, we note that the forgetful functor $\mathfrak{M}_{B}\to \mathfrak{M}$ is monadic. The corresponding monad is $\top\coloneqq (-)\otimes B:\mathfrak{M}\to \mathfrak{M}$ which clearly preserves colimits. By \cite[Theorem 5.5.9]{Borceux2}, we get that $\Mm={\mathfrak{M}_{B}}\cong \mathfrak{M}_\top$ is locally presentable, as $\mathfrak{M}$ is. Then, by \cref{cor:Lbiclosed}, we conclude that the functor $L:\mathfrak{M}_{B}^{B}\rightarrow (\mathfrak{M}_{B})_{\pred{B}}$ has a right adjoint.
\end{proof}

As we will see, a concrete description of the integrals can be achieved without requiring an explicit formula for the right adjoint $R$. However, the reader interested in having one can find it in \cref{ssec:RightAdj}.
 For the convenience of the interested reader and for future reference, let us extrapolate the following result, which went unnoticed in the foregoing argument.

\begin{proposition}\label{prop:HopfModLocPres}
For a bialgebra $B$, the category $\mathfrak{M}_{B}^{B}$ of $B$-Hopf modules is locally presentable.
\end{proposition}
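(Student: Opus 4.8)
The plan is to realise $\mathfrak{M}_{B}^{B}$ as the category of Eilenberg--Moore coalgebras for a suitable comonad on $\mathfrak{M}_{B}$, and then invoke a general result guaranteeing that coalgebras over a colimit-preserving comonad on a locally presentable category form a locally presentable category. Concretely, $B$ is a coalgebra in the monoidal category $\mathfrak{M}_{B}$, so the endofunctor $(-)\otimes B \colon \mathfrak{M}_{B}\to\mathfrak{M}_{B}$ (with diagonal $B$-action on the target, as recalled above) carries a comonad structure coming from $\Delta_B$ and $\varepsilon_B$, and $B$-Hopf modules are precisely its coalgebras: $\mathfrak{M}_{B}^{B}\cong(\mathfrak{M}_{B})^{(-)\otimes B}$. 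We already know from the proof of \cref{prop:predBalg} that $\mathfrak{M}_{B}$ is locally presentable (it is monadic over $\mathfrak{M}$ via the colimit-preserving monad $(-)\otimes B$, so \cite[Theorem 5.5.9]{Borceux2} applies). Moreover $(-)\otimes B$ on $\mathfrak{M}_{B}$ preserves all colimits, since $\mathfrak{M}_{B}$ is right closed and hence $(-)\otimes B$ is a left adjoint (as already observed in the proof of \cref{cor:Lbiclosed}).

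The key external input is the dual of the standard theorem that algebras over a finitary (or accessible) monad on a locally presentable category are locally presentable; for comonads one needs the comonad's underlying functor to preserve colimits (not just filtered ones), which is exactly why the right-closedness of $\mathfrak{M}_{B}$ is being used. The cleanest route is: the forgetful functor $U \colon (\mathfrak{M}_{B})^{(-)\otimes B}\to\mathfrak{M}_{B}$ is comonadic, it has a right adjoint (the cofree functor $(-)\otimes B$), and since $(-)\otimes B$ preserves colimits, $U$ creates all colimits; thus $(\mathfrak{M}_{B})^{(-)\otimes B}$ is cocomplete. One then checks accessibility: $(\mathfrak{M}_{B})^{(-)\otimes B}$ is the inserter / equifier-type $2$-limit (a strict pullback in $\mathbf{CAT}$, or an Eilenberg--Moore object) built from accessible functors between accessible categories, and the $2$-category of accessible categories and accessible functors is closed under such weighted limits by \cite[Theorem 5.1.6]{MakkaiPare} (or Adámek--Rosický \cite[Chapter 2]{AdamekRosicky}). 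Cocomplete plus accessible gives locally presentable.

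An alternative, perhaps more self-contained, implementation is to exhibit an explicit dense generator: if $\mathfrak{M}_{B}$ is locally $\lambda$-presentable with a set $\mathcal{G}$ of $\lambda$-presentable generators, then $\{G\otimes B \mid G\in\mathcal{G}\}$ (the cofree coalgebras on the generators) together with closure under $\lambda$-small colimits in $(\mathfrak{M}_{B})^{(-)\otimes B}$ — which are computed in $\mathfrak{M}_{B}$ — yields a strong generator of presentable objects, and one verifies the smallness (accessibility of the hom-functors out of the $G\otimes B$) using that $(-)\otimes B$ is a left adjoint, so $(\mathfrak{M}_{B})^{(-)\otimes B}(G\otimes B, -)\cong\mathfrak{M}_{B}(G,U(-))$ preserves $\lambda$-filtered colimits. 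Either way, the one line that actually does the work is the observation that the comonad underlying cofree $B$-Hopf modules \emph{preserves all colimits}, which is not automatic for comonads and is the reason the statement is about $\mathfrak{M}_{B}$ specifically and not an arbitrary monoidal category; everything else is an appeal to the standard machinery of accessible and locally presentable categories.

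The main obstacle I anticipate is purely a matter of citing the right form of the general theorem: the literature states local presentability of Eilenberg--Moore \emph{algebra} categories very cleanly, but the \emph{coalgebra} version requires the hypothesis that the comonad preserve colimits (or at least be accessible and have the coalgebra category cocomplete), and one must be slightly careful that the reference invoked actually covers comonads rather than only monads. I would resolve this by going through comonadicity of $U$ and the creation-of-colimits argument for cocompleteness explicitly, and then quoting an accessibility-of-$2$-limits result (Makkai--Paré or Adámek--Rosický) for the accessibility half, rather than quoting a single "algebras over an accessible monad" statement and dualising it informally.
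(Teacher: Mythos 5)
Your proposal is correct and follows essentially the same route as the paper: both identify $\mathfrak{M}_B^B$ with the Eilenberg--Moore category of coalgebras for the comonad $(-)\otimes B$ on the locally presentable category $\mathfrak{M}_B$, observe that this comonad is a left adjoint (hence cocontinuous/accessible) because $\mathfrak{M}_B$ is biclosed, and then invoke the general fact that coalgebras over such a comonad on a locally presentable category again form a locally presentable category. The only cosmetic difference is that you split the general fact into cocompleteness plus accessibility with explicit references, whereas the paper cites it as an adaptation of \cite[\S2.78, Theorem]{AR}; the substance is identical.
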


\begin{proof} 
A step in the proof of \cref{pro:L} consists in adapting \cite[\S2.78, Theorem]{AR} to show that the Eilenberg-Moore category of coalgebras for a filtered-colimit preserving comonad on a locally presentable category is locally presentable. In the present setting, the comonad $-\otimes B \colon \mathfrak{M}_B \to \mathfrak{M}_B$ is a left adjoint (because $\mathfrak{M}_B$ is biclosed) and hence cocontinuous. Its Eilenberg-Moore category is exactly $\mathfrak{M}_{B}^{B}$. Since $\mathfrak{M}_B$ is locally presentable, $\mathfrak{M}_{B}^{B}$ is locally presentable too.
\end{proof}

As mentioned, we are mainly interested in defining and studying left integrals on an arbitrary bialgebra $B$, not necessarily endowed with an antipode. The idea is to replace the diagram \eqref{diag:func} by the following one
\begin{equation}\label{diag:funcbialg}
\xymatrix{{\mathfrak{M}}\ar@<.5ex>[r]^-F& \mathfrak{M}_{B}^{B}\ar@<.5ex>[l]^--{G}\ar@<.5ex>[r]^-L& (\mathfrak{M}_{B})_{\pred{B}}\ar@<.5ex>[l]^-{R} }
\end{equation}
where $F:{\mathfrak{M}}\to \mathfrak{M}_B^B,{ M}\mapsto {M\otimes {B^\bullet_\bullet}}$, is the fully faithful functor involved in the structure theorem for $B$-Hopf modules and whose right adjoint is $G=(-)^{\mathrm{co} B}$. Then, to define the space of left integrals on $B$ as $GR(\pred{B})$, like in the classical case.


In order to be able to easily compute $GR$, whence $GR(\pred{B})$, let us recall that $LF\dashv GR$. Thus, by uniqueness of the right adjoint, it suffices to exhibit a right adjoint for $LF$ to get a complete description for $GR$. To this aim, note that $LF=(-)\otimes B:\mathfrak{M}\to (\mathfrak{M}_{B})_{\pred{B}}$ where  $B$ is regarded as a right $B$-module via the regular action and as a right $\pred{B}$-module via
$b\triangleleft f\coloneqq b_1f(b_2\otimes 1)$, for every $b\in B,f\in{}\pred{B}$. We also need the following remark.

\begin{remark}\label{rmk:semi-direct}
Let $B$ be  bialgebra and let  $A$ be a right $B$-module algebra. Then there is  a category isomorphism $(\mathfrak{M}_{B})_{A}\cong \mathfrak{M}_{B\ltimes A}$,  where $B\ltimes A$ is the semi-direct product algebra whose product is given, for every $b,b'\in B,a,a'\in A$, by \[(b\ltimes a)(b'\ltimes a')=bb'_1\ltimes(ab'_2)a',\] see \cite[Paragraph 1.7]{Som} for the Hopf case  on the left-hand side.
\begin{invisible}
Se vogliamo farlo per le quasi-bialgebre, basta guardare \cite[Proposition 2.16]{BPV}. Vediamo però la dim per le Hopf.
 If $M$ an object in  $(\mathfrak{M}_{B})_{A}$ we can make of it an object in  $\mathfrak{M}_{B\ltimes A}$ by setting $m(b\ltimes a)\coloneqq (mb)a$. Indeed, we have $(m(b\ltimes a))(b'\ltimes a')=(((mb)a)b')a'=((mbb'_1)(ab'_2))a'=(mbb'_1)((ab'_2)a')=m(bb'_1\ltimes(ab'_2)a')=m((b\ltimes a)(b'\ltimes a'))$. Conversely, given $M$ in $\mathfrak{M}_{B\ltimes A}$ we regard it as an object in $(\mathfrak{M}_{B})_{A}$ by setting $mb\coloneqq m(b\ltimes 1_A)$ and $ma\coloneqq m(1_B\ltimes a)$. Indeed it is clear that $M\in \mathfrak{M}_{A}$. Moreover $(mb)b'=m(b\ltimes 1_A)(b'\ltimes 1_A)=m(bb'_1\ltimes1_Ab'_2)=m(bb'_1\ltimes1_A\varepsilon_B(b'_2))=m(bb'\ltimes1_A)=m(bb')$ and
 $(ma)b=mm(1_B\ltimes a)(b\ltimes 1_A)=m(b_1\ltimes ab_2)=(mb_1)(ab_2)$ which means that the right $A$-action is right $B$-linear.
\end{invisible}\end{remark}

\begin{proposition}
Let $B$ be a bialgebra and let $A$ be right $B$-module algebra. Assume that the right regular $B$-module $B$ becomes an object in $(\mathfrak{M}_{B})_{A}$. Then the functor $(-)\otimes B:\mathfrak{M}\to (\mathfrak{M}_{B})_{A}$ has a right adjoint $(\mathfrak{M}_{B})_{A}(B,-):(\mathfrak{M}_{B})_{A}\to \mathfrak{M}$.
\end{proposition}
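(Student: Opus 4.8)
The plan is to reduce the statement to a completely standard hom–tensor adjunction for modules over a ring, using the semi-direct product description from \cref{rmk:semi-direct}.

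First, since $B$ is assumed to be an object of $(\mathfrak{M}_{B})_{A}$, the category isomorphism $(\mathfrak{M}_{B})_{A}\cong \mathfrak{M}_{B\ltimes A}$ of \cref{rmk:semi-direct} turns it into a right $B\ltimes A$-module; explicitly, combining the regular right $B$-action on $B$ with the given right $A$-action $b\triangleleft a$, the $B\ltimes A$-action is $b\cdot(b'\ltimes a)=(bb')\triangleleft a$. Because the isomorphism of \cref{rmk:semi-direct} (equivalently, of \cref{lem:MBA}) is the identity on underlying vector spaces and on morphisms, one checks at once that it carries the functor $(-)\otimes B\colon \mathfrak{M}\to (\mathfrak{M}_{B})_{A}$ to the functor $(-)\otimes B\colon \mathfrak{M}\to \mathfrak{M}_{B\ltimes A}$ sending a vector space $V$ to $V\otimes B$ with the right $B\ltimes A$-action $(v\otimes b)\cdot(b'\ltimes a)=v\otimes\big(b\cdot(b'\ltimes a)\big)$. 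This is the step where the hypothesis that $B\in (\mathfrak{M}_{B})_{A}$ is really used.

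It therefore suffices to recall the following general fact: for any $\Bbbk$-algebra $R$ and any right $R$-module $P$, the functor $(-)\otimes P\colon \mathfrak{M}\to \mathfrak{M}_{R}$, $V\mapsto V\otimes P$ with action $(v\otimes p)r=v\otimes pr$, is left adjoint to $\mathfrak{M}_{R}(P,-)\colon \mathfrak{M}_{R}\to \mathfrak{M}$. The natural bijection is given by $\phi\mapsto\big(v\mapsto\phi(v\otimes-)\big)$ with inverse $\psi\mapsto\big(v\otimes p\mapsto\psi(v)(p)\big)$; checking that these assignments are well defined (the first lands in $R$-linear maps, the second in a $\Bbbk$-linear map that is moreover $R$-linear on $V\otimes P$), mutually inverse, and natural in $V$ and in $M$ is a short direct computation that only uses the formula $(v\otimes p)r=v\otimes pr$. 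Applying this with $R=B\ltimes A$ and $P=B$ and transporting back along the inverse of the isomorphism $(\mathfrak{M}_{B})_{A}\cong \mathfrak{M}_{B\ltimes A}$, the right adjoint becomes $\mathfrak{M}_{B\ltimes A}(B,-)\cong (\mathfrak{M}_{B})_{A}(B,-)$, as claimed.

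The statement is essentially formal, so I do not expect a genuine obstacle; the only points needing care are verifying that the isomorphism of \cref{rmk:semi-direct} really intertwines the two incarnations of $(-)\otimes B$, and checking naturality of the adjunction bijection in both variables. If one prefers to avoid the semi-direct product altogether, the same formulas $\phi\mapsto\big(v\mapsto\phi(v\otimes-)\big)$ and $\psi\mapsto\big(v\otimes b\mapsto\psi(v)(b)\big)$ directly establish a natural bijection $(\mathfrak{M}_{B})_{A}(V\otimes B,M)\cong \mathfrak{M}\big(V,(\mathfrak{M}_{B})_{A}(B,M)\big)$, one simply verifying compatibility with the right $B$-action and with the right $A$-action separately.
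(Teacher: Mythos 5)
Your proof is correct and follows essentially the same route as the paper: the paper likewise reduces the claim to the usual tensor--hom adjunction for right modules over the semi-direct product algebra, via the category isomorphism $(\mathfrak{M}_{B})_{A}\cong \mathfrak{M}_{B\ltimes A}$ of \cref{rmk:semi-direct}. The extra details you supply (the explicit unit/counit formulas and the remark that one could also verify the bijection directly without the semi-direct product) are fine but not needed beyond what the paper records.
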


\begin{proof}
The claimed adjunction is the usual tensor-hom adjunction
up to the identification given by the category isomorphism $(\mathfrak{M}_{B})_{A}\cong \mathfrak{M}_{B\ltimes A}$ recalled in \cref{rmk:semi-direct}.
\end{proof}

We now provide a more handy description of the functor $(\mathfrak{M}_{B})_{A}(B,-):(\mathfrak{M}_{B})_A\to \mathfrak{M}.$

\begin{lemma}
Let $B$ be a bialgebra and let $A$ be right $B$-module algebra. Assume that the right regular $B$-module $B$ becomes an object in $(\mathfrak{M}_{B})_{A}$ with respect to an $A$-module structure $\bra \colon B \otimes A \to B$.
For every object $X$ in $(\mathfrak{M}_{B})_{A}$, the assignment $\xi\mapsto \xi(1)$ yields a $\Bbbk$-linear isomorphism \[(\mathfrak{M}_{B})_{A}(B,X)\to X^{A}\coloneqq \{x\in X\mid (xb)a = x(b \bra a),\text{ for all }a\in A,b\in B\}.\]
\end{lemma}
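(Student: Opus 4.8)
The plan is to construct the inverse map explicitly and check that the two assignments are mutually inverse linear maps. First I would note that $B$ is an object in $(\mathfrak{M}_{B})_{A}$, so it makes sense to evaluate any morphism $\xi \in (\mathfrak{M}_{B})_{A}(B,X)$ at $1 = 1_B$. The candidate inverse sends $x \in X^{A}$ to the map $\xi_x \colon B \to X$ defined by $\xi_x(b) \coloneqq xb$, using the right $B$-module structure of $X$. The two things to verify are: (a) that $\xi(1) \in X^{A}$ for every morphism $\xi$, so the first assignment lands in $X^A$; and (b) that $\xi_x$ is indeed a morphism in $(\mathfrak{M}_{B})_{A}$ for every $x \in X^A$, so the second assignment is well-defined.

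For (a), given $\xi \in (\mathfrak{M}_{B})_{A}(B,X)$ and writing $x = \xi(1)$, right $B$-linearity gives $\xi(b) = \xi(1 \cdot b) = xb$, and $A$-linearity gives, for all $a \in A$, $b \in B$,
\[
(xb)a = \big(\xi(b)\big)a = \xi(b \bra a) = x(b \bra a),
\]
which is exactly the defining condition of $X^A$. (Here I use that the $A$-action on $B$ is $\bra$ and the $A$-action on $X$ is the one making $X$ an object of $(\mathfrak{M}_{B})_{A}$.) Conversely, for (b), given $x \in X^A$, the map $\xi_x(b) = xb$ is right $B$-linear since $\xi_x(bb') = x(bb') = (xb)b' = \xi_x(b)b'$; it is $A$-linear precisely because $\xi_x(b \bra a) = x(b \bra a) = (xb)a = \xi_x(b)a$, which is the condition defining $X^A$. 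Linearity of both assignments in the relevant variables is immediate, so it only remains to check they are mutually inverse: starting from $\xi$, we have $\xi_{\xi(1)}(b) = \xi(1)b = \xi(b)$ by right $B$-linearity, so $\xi_{\xi(1)} = \xi$; starting from $x$, we have $\xi_x(1) = x \cdot 1_B = x$. Hence the two maps are inverse $\Bbbk$-linear bijections.

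I do not anticipate a genuine obstacle here, as this is the standard "tensor-hom" identification $(\mathfrak{M}_{B})_{A}(B,X) \cong X^A$ with $B$ in its regular module structure, the analogue of $\mathrm{Hom}_R(R,M) \cong M$; the only mild subtlety is bookkeeping the two commuting actions correctly, in particular making sure the $A$-module structure $\bra$ on $B$ is the one for which $B$ lives in $(\mathfrak{M}_{B})_{A}$ (so that $1_B \bra a$ need not be $\varepsilon_B(\text{-})$-trivial in general) and that the condition $(xb)a = x(b\bra a)$ in the definition of $X^A$ matches the $A$-linearity of $\xi_x$ verbatim. Everything else is a routine verification of module-morphism axioms.
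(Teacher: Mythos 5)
Your proof is correct and follows essentially the same route as the paper: right $B$-linearity forces $\xi(b)=\xi(1)b$, so $\xi$ is determined by $\xi(1)$, and right $A$-linearity of $\xi$ translates verbatim into the defining condition of $X^A$. The paper's version is merely terser (it leaves the construction of the inverse $x\mapsto \xi_x$ implicit), whereas you spell out both directions and the mutual-inverse check explicitly.
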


\begin{proof}
Let $\xi \in (\mathfrak{M}_{B})_{A}(B,X)$. Since $\xi$ is right $B$-linear it is uniquely determined by $\xi(1)\in X$. Moreover $\xi$ is right $A$-linear, i.e.~$\xi (b \bra a) = \xi (b)a$ for every $b\in B,a\in A$. Thus $\xi (1)(b \bra a) = (\xi (1)b)a$ that is $\xi(1)$ belongs to the space $X^{A}$ of ``right $A$-invariant'' elements in $X$. Thus we have the claimed bijection.
\end{proof}

Summing up, for every object $X$ in $(\mathfrak{M}_{B})_{\pred{B}}$, we have $GR(X)\cong (\mathfrak{M}_{B})_{\pred{B}}(B,X)\cong X^{\pred{B}}$.

By taking $X={}\pred{B}$ and comparing with \cite[Definition 5.1.1]{DNC}, we are now going to give a better presentation of the vector space
\[\textstyle GR(\pred{B})\cong ({\pred{B}})^{\pred{B}}=\{f\in {}\pred{B}\mid (fb) \conv g=f(b\triangleleft g),\text{ for all }g\in {}\pred{B},b\in B\}.\]

\begin{lemma}\label{lem:fotrev} Let $f\in {}\pred{B}$. Then $f\in ({\pred{B}})^{\pred{B}}$ if and only if, for every $g\in {}\pred{B},x,y\in B$, one of the following equivalent conditions holds true.
\begin{equation}\label{form:integ1}
f(x_2\otimes yz_1)g(x_1\otimes z_2)=f(x\otimes y_1z)g(y_2\otimes 1),\quad \text{ for every } z\in B.
\end{equation}
\begin{equation}\label{form:integ2}
f(x_2\otimes y)g(x_1\otimes 1)=f(x\otimes y_1)g(y_2\otimes 1).
\end{equation}
\end{lemma}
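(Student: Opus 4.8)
The strategy is to unwind the definition of $({\pred{B}})^{\pred{B}}$ using the explicit formulas from \cref{prop:predBalg} and \cref{prop:HopfModLocPres}'s surrounding discussion, then establish the chain of equivalences $f\in({\pred{B}})^{\pred{B}} \Leftrightarrow \eqref{form:integ1} \Leftrightarrow \eqref{form:integ2}$. First I would record that, by definition, $f\in({\pred{B}})^{\pred{B}}$ means $(fb)\conv g = f(b\triangleleft g)$ in $\pred{B}$ for all $g\in\pred{B}$ and $b\in B$; since two elements of $\pred{B}=\mathfrak{M}_B(B_\bullet\otimes B_\bullet,\Bbbk)$ are equal iff they agree on all $x\otimes y$ with $x,y\in B$, this is the statement that $\big((fb)\conv g\big)(x\otimes y) = \big(f(b\triangleleft g)\big)(x\otimes y)$ for all $x,y\in B$. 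Now I expand both sides. For the left-hand side, the multiplication formula \eqref{eq:algstruct} gives $\big((fb)\conv g\big)(x\otimes y) = (fb)(x_2\otimes y_1)\,g(x_1\otimes y_2) = f(x_2\otimes b y_1)\,g(x_1\otimes y_2)$, using the right $B$-action $(fb)(x\otimes y)=f(x\otimes by)$ recorded before \cref{prop:predBalg}. For the right-hand side, I use $b\triangleleft g = b_1 g(b_2\otimes 1)$ (the action of $\pred{B}$ on the regular module $B$, recalled just before \cref{rmk:semi-direct}), so $\big(f(b\triangleleft g)\big)(x\otimes y) = f(x\otimes (b\triangleleft g)y) = f(x\otimes b_1 y)\,g(b_2\otimes 1)$. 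Renaming the dummy variables — writing $y$ for what appears as $b$ and $z$ for what appears as $y$ above — yields exactly \eqref{form:integ1}; so the equivalence $f\in({\pred{B}})^{\pred{B}}\Leftrightarrow\eqref{form:integ1}$ is just this bookkeeping.

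Next, \eqref{form:integ1} $\Rightarrow$ \eqref{form:integ2} is immediate: set $z=1_B$ in \eqref{form:integ1} and use $\Delta(1)=1\otimes 1$ together with $\varepsilon$-counitality to collapse $z_1,z_2$. For the converse \eqref{form:integ2} $\Rightarrow$ \eqref{form:integ1}, the idea is to reintroduce the variable $z$ by applying \eqref{form:integ2} with cleverly chosen arguments. Concretely, in \eqref{form:integ2} replace $x$ by $x$ and $y$ by $yz$ (using that both sides are functions of the second slot and $B$ is an algebra), obtaining $f(x_2\otimes yz)\,g(x_1\otimes 1) = f(x\otimes (yz)_1)\,g((yz)_2\otimes 1) = f(x\otimes y_1 z_1)\,g(y_2 z_2\otimes 1)$. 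This does not yet match \eqref{form:integ1}, whose right side is $f(x\otimes y_1 z)g(y_2\otimes 1)$ and whose left side carries $g(x_1\otimes z_2)$ rather than $g(x_1\otimes 1)$. The bridge is to exploit that $g$ itself lies in $\pred{B}$, hence is right $B$-linear in a way compatible with the coalgebra structure; more precisely, the right $B$-linearity of the multiplication $m_{\pred{B}}$ (established in \cref{prop:predBalg}) forces a "Sweedler-type" compatibility that lets one transport a factor of $B$ between the $g$-slot and the $f$-slot. I expect the clean way is to recognize \eqref{form:integ1} and \eqref{form:integ2} as two evaluations of the single identity $(fb)\conv g = f(b\triangleleft g)$ — the first on $x\otimes yz$ after first acting by a suitable element, the second on $x\otimes y$ — and to deduce one from the other using only the module-algebra axioms and counitality, without any manipulation that secretly needs an antipode.

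The main obstacle will be the implication \eqref{form:integ2} $\Rightarrow$ \eqref{form:integ1}: \eqref{form:integ2} looks weaker because it has "lost" the variable $z$, and recovering the full strength requires using that the defining equation $(fb)\conv g=f(b\triangleleft g)$ holds for \emph{all} $g\in\pred{B}$ and \emph{all} $b\in B$, not just a single instance. The trick I anticipate: given $z\in B$ and $g\in\pred{B}$, define $g'\in\pred{B}$ by $g'(a\otimes b)\coloneqq g(a\otimes bz)$ — note $g'=g\,?$ needs checking, but a shift like this, or else the element $g\cdot$ something, stays in $\pred{B}$ because $\pred{B}$ is a right $B$-module — and apply \eqref{form:integ2} with $g$ replaced by $g'$ (or apply the original equation $(fb)\conv g'=f(b\triangleleft g')$). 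Expanding and comparing, the $z$ threads through the $g$-slot exactly as required in \eqref{form:integ1}. The bookkeeping here — keeping track of which Sweedler index the new $z$ attaches to after passing through the convolution product — is the only genuinely delicate point; everything else is routine expansion of \eqref{eq:algstruct} and the two module actions, plus counitality.
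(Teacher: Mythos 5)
Your identification of $({\pred{B}})^{\pred{B}}$ with condition \eqref{form:integ1} is correct and is exactly the paper's first step: expanding $(fy)\conv g$ and $f(y\triangleleft g)$ on $x\otimes z$ via \eqref{eq:algstruct} and the actions $(fy)(x\otimes z)=f(x\otimes yz)$ and $y\triangleleft g=y_1g(y_2\otimes 1)$ gives \eqref{form:integ1} after relabelling. The implication \eqref{form:integ1}$\Rightarrow$\eqref{form:integ2} by setting $z=1$ is also fine. The gap is in the converse \eqref{form:integ2}$\Rightarrow$\eqref{form:integ1}, which you explicitly leave as a plan rather than a proof: you name the right ingredient (absorbing the stray $z_2$ into $g$ via the right $B$-module structure on $\pred{B}$) but never carry out the computation, and your one concrete attempt at the auxiliary element is off. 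You propose $g'(a\otimes b)\coloneqq g(a\otimes bz)$, but the module action defined in \cref{sec:bialg} is $(gz)(a\otimes b)=g(a\otimes zb)$, with $z$ multiplying on the \emph{left} of the second slot; your $g'$ is in general not right $B$-balanced (one would need $g(ac_1\otimes bc_2z)=g(a\otimes bz)\varepsilon(c)$, whereas balancedness of $g$ only gives $g(ac_1\otimes bzc_2)=g(a\otimes bz)\varepsilon(c)$), so it need not lie in $\pred{B}$ and \eqref{form:integ2} cannot be applied to it.

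The correct completion, which is the paper's argument, is short: starting from the left side of \eqref{form:integ1}, write $g(x_1\otimes z_2)=(gz_2)(x_1\otimes 1)$ using the genuine module action, then apply \eqref{form:integ2} with $y$ replaced by $yz_1$ and $g$ replaced by $gz_2$ (legitimate by bilinearity in $(y,g)$, since \eqref{form:integ2} holds for all such pairs), obtaining
\begin{align*}
f(x_2\otimes yz_1)(gz_2)(x_1\otimes 1)
&= f(x\otimes y_1z_1)(gz_3)(y_2z_2\otimes 1)
= f(x\otimes y_1z_1)\,g(y_2z_2\otimes z_3),
\end{align*}
and finally use that $g\in\mathfrak{M}_B(B_\bullet\otimes B_\bullet,\Bbbk)$ is $B$-balanced, so $g(y_2z_2\otimes z_3)=g(y_2\otimes 1)\varepsilon(z_2)$, which collapses the expression to $f(x\otimes y_1z)g(y_2\otimes 1)$. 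Without this last balancedness step the extra Sweedler legs of $z$ do not disappear, so it is an essential part of the argument, not mere bookkeeping.
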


\begin{proof}
By definition, $f\in ({\pred{B}})^{\pred{B}}$ if and only if, for every $g\in {}\pred{B},y\in B$, one has $(fy) \conv g=f(y\triangleleft g)$ and when we evaluate this equality on $x\otimes z$, with $x,z\in B$, we get \eqref{form:integ1}.

Let us check that \eqref{form:integ1} and  \eqref{form:integ2} are equivalent. If we take $z=1$ in \eqref{form:integ1}, we get \eqref{form:integ2}. Conversely, if \eqref{form:integ2} holds true so does \eqref{form:integ1}, as
\begin{align*}
f(x_2\otimes by_1)g(x_1\otimes y_2)
&=f(x_2\otimes by_1)(gy_2)(x_1\otimes 1)
\overset{\eqref{form:integ2}}
=f(x\otimes b_1y_1)(gy_3)(b_2y_2\otimes 1)\\
&=f(x\otimes b_1y_1)g(b_2y_2\otimes y_3)
=f(x\otimes b_1y)g(b_2\otimes 1).\qedhere
\end{align*}
\end{proof}

\begin{remark}
We noticed that $B$ is a right $\pred{B}$-module via
$b\triangleleft g\coloneqq b_1g(b_2\otimes 1)$, for every $b\in B,g\in \pred{B}$.  It is also a left $\pred{B}$-module via
$g\triangleright b\coloneqq g(b_1\otimes 1)b_2$ and even a $\pred{B}$-bimodule. With respect to these two actions, condition \eqref{form:integ2} rewrites as
$f(g\triangleright x\otimes y)=f(x\otimes y\triangleleft g)$.
\end{remark}

In order to give an even better presentation of $({\pred{B}})^{\pred{B}}$, we take advantage of the following notation introduced in \cite[Notation 2.1]{ArMeSa} in case $Y=B$.

\begin{notation}
\label{not:oslash}
Let $B$ be a bialgebra. For every $X,Y$ in $\mathfrak{M}_B$, we denote by $X\oslash Y$ the quotient $\frac{X_\bullet\otimes Y_\bullet}{(X_\bullet\otimes Y_\bullet) \cdot B^+}$, where $B^+=\ker(\varepsilon_B)$ is the augmentation ideal.  We also denote by $x\oslash y$ the equivalence class of $x\otimes y$, for all $x\in X,y\in Y$, so that $xb_1\oslash yb_2=x\oslash y\varepsilon(b)$, for every $b\in B$. Symmetrically, for every $U,V$ in $\prescript{}{B}{\mathfrak{M}}$, we denote by $U\obslash V$ the quotient $\frac{\prescript{}{\bullet}{U} \otimes \prescript{}{\bullet}{V}}{B^+ \cdot (\prescript{}{\bullet}{U} \otimes \prescript{}{\bullet}{V})}$.
\end{notation}

\begin{remark}
\label{rmk:BotBact}
Recall from \cite[Proposition 2.2]{ArMeSa} that, if $B$ is a bialgebra, then $B\oslash B$ is a coalgebra in $_{B \otimes B^\cop}\mathfrak{M}$ with comultiplication, counit and left $B \otimes B$-module structure given by
\begin{equation}\label{def:oslashcoalg}
   \Delta(x\oslash y) = (x_1\oslash y_2) \otimes (x_2\oslash y_1),
   \quad
   \varepsilon(x\oslash y)=\varepsilon(x)\varepsilon(y)
   \quad \text{and}\quad
   (a \otimes b) \cdot (x\oslash y) = ax\oslash by,
   \end{equation}
   for all $x,y,a,b\in B$. In particular the canonical projection $B\otimes B^\cop\to B\oslash B,\;x\otimes y\mapsto x\oslash y,$ is a coalgebra map, so that its kernel $(B\otimes B)\cdot B^+ = (B\otimes B)\Delta(B^+)$ is a coideal in $B\otimes B^\cop$. In what follows, we will often omit the $\cdot$ symbol and simply write $(B\otimes B)B^+$ or $(B\otimes B)\Delta(B^+)$ indifferently.
\end{remark}

We can now  provide an alternative description of the pre-dual by means of Notation \ref{not:oslash}. 


\begin{lemma}\label{lem:oslash}
Let $B$ be a bialgebra. For every $X$ in $\mathfrak{M}_B$, we have a linear isomorphism
\[\pred{X}=\mathfrak{M}_{B}(X_\bullet\otimes B_\bullet, \Bbbk)\overset{\sim}{\longrightarrow} \mathfrak{M}(X\oslash B,\Bbbk) = (X\oslash B)^*,\quad f\mapsto [x\oslash a\mapsto f(x\otimes a)].\]
Through this isomorphism, the right $B$-module algebra $\pred{B}$ corresponds to the dual algebra ${^*(B \oslash B)} = {(B \oslash B)^*}^\op$ of $B\oslash B$, considered as a coalgebra and as a left $B^\cop$-module. 
\end{lemma}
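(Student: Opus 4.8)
The plan is to prove the two assertions of the statement separately. First I establish the $\Bbbk$-linear isomorphism $\pred{X}\cong (X\oslash B)^{*}$ for an arbitrary $X$ in $\mathfrak{M}_B$, and then, taking $X=B$, I transport along it the right $B$-module algebra structure of $\pred{B}$ computed in \cref{prop:predBalg} and match it with the one carried by $(B\oslash B)^{*}$ coming from the coalgebra and left $B^\cop$-module structures on $B\oslash B$ recorded in \cref{rmk:BotBact}.

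For the first assertion, recall that the unit object $\Bbbk=\mathbf{1}$ of $\mathfrak{M}_B$ carries the trivial action $\lambda\cdot b=\lambda\varepsilon_B(b)$. Hence a $\Bbbk$-linear map $f\colon X\otimes B\to\Bbbk$ lies in $\pred{X}=\mathfrak{M}_B(X_\bullet\otimes B_\bullet,\Bbbk)$, i.e.\ is right $B$-linear for the diagonal action, if and only if $f\bigl((x\otimes a)b\bigr)=\varepsilon_B(b)\,f(x\otimes a)$ for all $x\in X$, $a,b\in B$. Evaluating on $b\in B^{+}$ shows that this forces $f$ to vanish on $(X_\bullet\otimes B_\bullet)B^{+}$; conversely, if $f$ vanishes there, then writing $b-\varepsilon_B(b)1\in B^{+}$ gives back the displayed identity, since $(x\otimes a)\bigl(b-\varepsilon_B(b)1\bigr)\in (X_\bullet\otimes B_\bullet)B^{+}$. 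Thus $\pred{X}$ coincides with the space of functionals on $X\otimes B$ factoring through the quotient $X\oslash B$, and $f\mapsto\overline f$, $\overline f(x\oslash a)\coloneqq f(x\otimes a)$, is a well-defined $\Bbbk$-linear bijection onto $(X\oslash B)^{*}$, with inverse given by precomposition with the canonical projection $X\otimes B\to X\oslash B$.

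For the second assertion, put $X=B$ and use the comultiplication $\Delta(x\oslash y)=(x_1\oslash y_2)\otimes(x_2\oslash y_1)$ and counit $\varepsilon(x\oslash y)=\varepsilon_B(x)\varepsilon_B(y)$ of $B\oslash B$ from \cref{rmk:BotBact}. A direct computation from \eqref{eq:algstruct} yields, for $f,g\in\pred{B}$ and $x,y\in B$,
\[\overline{f\conv g}(x\oslash y)=f(x_2\otimes y_1)g(x_1\otimes y_2)=\overline f(x_2\oslash y_1)\,\overline g(x_1\oslash y_2)=(\overline g\ast\overline f)(x\oslash y),\]
where $\ast$ denotes the convolution product of $(B\oslash B)^{*}$, and similarly $\overline{1_{\pred{B}}}=\varepsilon_{B\oslash B}$, the unit of $(B\oslash B)^{*}$. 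Hence $f\mapsto\overline f$ is an isomorphism of algebras from $\pred{B}$ onto the \emph{opposite} algebra $\bigl((B\oslash B)^{*}\bigr)^{\op}={^*(B\oslash B)}$. For the module structures, the explicit right $B$-action on $[B,\Bbbk]$ recalled before gives $\overline{fa}(x\oslash b)=f(x\otimes ab)=\overline f(x\oslash ab)=\overline f\bigl(a\cdot(x\oslash b)\bigr)$, where $a\cdot(x\oslash b)\coloneqq x\oslash ab$ is the left $B^\cop$-module structure on $B\oslash B$, namely the restriction of the left $B\otimes B^\cop$-action along the second tensor factor. This is precisely the right $B$-action on the dual of that left module, so $f\mapsto\overline f$ is an isomorphism of right $B$-module algebras $\pred{B}\cong{^*(B\oslash B)}$.

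I do not expect a serious obstacle: the argument is essentially bookkeeping. The two points that require care are (i) recognising that right $B$-linearity of a functional on $X\otimes B$ is exactly vanishing on $(X_\bullet\otimes B_\bullet)B^{+}$ — this is what makes the $\oslash$-construction appear — and (ii) keeping the conventions of \cref{rmk:BotBact} straight, so as to see that one lands on the \emph{opposite} convolution algebra ${^*(B\oslash B)}$ and that the relevant module action on $B\oslash B$ is the one on the second tensor factor, that is, its $B^\cop$-module structure, rather than the one on the first.
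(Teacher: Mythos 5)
Your proof is correct and follows essentially the same route as the paper: your direct check that right $B$-linearity of a functional on $X_\bullet\otimes B_\bullet$ into the trivial module $\Bbbk$ amounts to vanishing on $(X_\bullet\otimes B_\bullet)B^+$ is exactly the explicit form of the paper's adjunction argument $\mathfrak{M}_B(X_\bullet\otimes B_\bullet,\varepsilon_*\Bbbk)\cong\mathfrak{M}\big((X_\bullet\otimes B_\bullet)\otimes_B\Bbbk,\Bbbk\big)$. For the second assertion you merely write out the comparison of structures that the paper leaves to the reader, and your computations — landing in the opposite convolution algebra ${(B\oslash B)^*}^\op$ and matching the right $B$-action on $\pred{B}$ with the dual of the left $B^\cop$-action on the second tensor factor of $B\oslash B$ — are correct.
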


\begin{proof}
Consider the counit $\varepsilon:B\to \Bbbk$. The restriction of scalars functor $\varepsilon_*:\mathfrak{M}\to \mathfrak{M}_B$ has a left adjoint given by the extension of scalars (or induction) functor $\varepsilon^*=(-)\otimes_B \Bbbk:\mathfrak{M}_B\to \mathfrak{M}$.
  Thus
  $\pred{X}=\mathfrak{M}_{B}(X_\bullet\otimes B_\bullet, \Bbbk)
  =\mathfrak{M}_{B}(X_\bullet\otimes B_\bullet, \varepsilon_*(\Bbbk))
  \cong\mathfrak{M}(\varepsilon^*(X_\bullet\otimes B_\bullet), \Bbbk)$,
  so that the pre-dual $\pred{X}$ is isomorphic to the linear dual of \[
  \varepsilon^*(X_\bullet\otimes B_\bullet)=(X_\bullet\otimes B_\bullet)\otimes_B \Bbbk\cong\frac{X_\bullet\otimes B_\bullet}{(X_\bullet\otimes B_\bullet)B^+} = X\oslash B.\]
  As a consequence, we get the bijection in the statement and so $\pred{B}$ is isomorphic, as a vector space, to the linear dual $\mathfrak{M}(B\oslash B,\Bbbk) = (B \oslash B)^*$. By comparing the right $B$-module algebra structure on $\pred{B}$ from \cref{prop:predBalg} with the left $B^\cop$-module coalgebra structure on $B \oslash B$ from \cref{rmk:BotBact}, it becomes clear that the former is the dual algebra of the latter as claimed. 
\end{proof}


\begin{lemma}\label{rem:oslash}
Via the isomorphism given in \cref{lem:oslash}, the elements in $({\pred{B}})^{\pred{B}}$ correspond to the elements $f\in\mathfrak{M}(B\oslash B,\Bbbk)$ that fulfil anyone of the following equivalent conditions.
 \begin{gather}
f(x_2\oslash yz_1)g(x_1\oslash z_2)=f(x\oslash y_1z)g(y_2\oslash 1),\text{ for all } x,y,z\in B, g\in \mathfrak{M}(B\oslash B,\Bbbk). \label{form:integosl1} \\
f(x_2\oslash yz_1)x_1\oslash z_2=f(x\oslash y_1z)y_2\oslash 1,\text{ for all } x,y,z\in B. \label{form:integosl2} \\
x_1 f(x_2\oslash y)\oslash 1=f(x\oslash y_1)y_2\oslash 1,\text{ for all } x,y\in B. \label{form:integosl3}
\end{gather}
\end{lemma}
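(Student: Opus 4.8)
The plan is to transport the characterisation of $(\pred{B})^{\pred{B}}$ obtained in \cref{lem:fotrev} along the linear isomorphism $\pred{B}\xrightarrow{\ \sim\ }\mathfrak{M}(B\oslash B,\Bbbk)$ of \cref{lem:oslash}, and then to strip the auxiliary functional $g$ from the resulting identities by invoking the fact that the linear dual of a vector space separates its points. No new algebraic input is needed beyond these two facts.

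First I would recall that the isomorphism of \cref{lem:oslash} sends $h\in\pred{B}$ to the functional $x\oslash a\mapsto h(x\otimes a)$ and is bijective; in particular $h(x\otimes a)=h(x\oslash a)$ under this identification, and as $g$ runs through $\pred{B}$ the associated functional runs through all of $\mathfrak{M}(B\oslash B,\Bbbk)$. Rewriting conditions \eqref{form:integ1} and \eqref{form:integ2} of \cref{lem:fotrev} in this language, I get that $f\in(\pred{B})^{\pred{B}}$ if and only if \eqref{form:integosl1} holds --- indeed \eqref{form:integosl1} is literally the $\oslash$-transcription of \eqref{form:integ1} --- and, equivalently, if and only if
\[
f(x_2\oslash y)\,g(x_1\oslash 1)=f(x\oslash y_1)\,g(y_2\oslash 1)\qquad\text{for all } x,y\in B,\ g\in\mathfrak{M}(B\oslash B,\Bbbk),
\]
the latter being the $\oslash$-transcription of \eqref{form:integ2}. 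This already establishes the equivalence between $f\in(\pred{B})^{\pred{B}}$ and \eqref{form:integosl1}.

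Next I would eliminate $g$. For fixed $x,y,z\in B$, linearity of $g$ turns the two sides of \eqref{form:integosl1} into $g\big(\sum f(x_2\oslash yz_1)\,(x_1\oslash z_2)\big)$ and $g\big(\sum f(x\oslash y_1z)\,(y_2\oslash 1)\big)$, where both arguments are well-defined elements of the vector space $B\oslash B$ (the Sweedler sums are finite and the formulas descend from $B\otimes B$). Since the linear functionals on a vector space separate its points, \eqref{form:integosl1} holding for all $g$ is equivalent to the coincidence of those two elements for all $x,y,z$, which is exactly \eqref{form:integosl2}. Applying the same separation argument to the displayed $\oslash$-transcription of \eqref{form:integ2} shows that it is equivalent to $x_1 f(x_2\oslash y)\oslash 1=f(x\oslash y_1)\,y_2\oslash 1$ for all $x,y\in B$, that is, to \eqref{form:integosl3}. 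Assembling these implications, $f\in(\pred{B})^{\pred{B}}$ is equivalent to each of \eqref{form:integosl1}, \eqref{form:integosl2} and \eqref{form:integosl3}, and in particular the three conditions are mutually equivalent. (As a cross-check, \eqref{form:integosl2}$\Rightarrow$\eqref{form:integosl3} is immediate by setting $z=1$, and the converse just re-runs, in the $\oslash$-notation, the implication \eqref{form:integ2}$\Rightarrow$\eqref{form:integ1} already proved in \cref{lem:fotrev}.)

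I do not expect a genuine obstacle: the argument is essentially a change of notation plus a separation-of-points remark. If anything, the step demanding the most care is the bookkeeping of the Sweedler indices while transcribing \eqref{form:integ1} and \eqref{form:integ2}, together with the observation that the quantifier ``for all $g\in\pred{B}$'' is legitimately replaced by ``for all $g\in\mathfrak{M}(B\oslash B,\Bbbk)$'' thanks to the bijection of \cref{lem:oslash}; both are routine.
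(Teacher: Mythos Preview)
Your proposal is correct and follows essentially the same route as the paper: transport \eqref{form:integ1} through the isomorphism of \cref{lem:oslash} to obtain \eqref{form:integosl1}, then use that linear functionals separate points to strip off $g$ and obtain \eqref{form:integosl2}, and finally reduce to \eqref{form:integosl3} (the paper sets $z=1$ in \eqref{form:integosl2} and invokes the argument of \cref{lem:fotrev}, whereas you transcribe \eqref{form:integ2} directly and separate points --- a negligible variation you yourself flag as a cross-check).
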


\begin{proof}
If we look at an element $f$ in $({\pred{B}})^{\pred{B}}$ as an element in $\mathfrak{M}(B\oslash B,\Bbbk)$ via the mentioned isomorphism, then \eqref{form:integ1} rewrites  as \eqref{form:integosl1}.
   Since the latter holds for every $\Bbbk$-linear map $g$, it is equivalent to \eqref{form:integosl2}.
Again, by taking $z=1$ in \eqref{form:integosl2}, one gets this is equivalent to \eqref{form:integosl3} in a similar manner as done in the proof of \cref{lem:fotrev}.
\end{proof}

In view of \cref{lem:oslash}, henceforth we adopt the identification $\pred{B} = {(B\oslash B)^*}$ as vector spaces and $\pred{B} = {^*(B \oslash B)}$ as algebras. In particular, for all $f,g \in (B \oslash B)^*$, $x,y \in B$ we write
\[(f \conv g)(x \oslash y) = f(x_2 \oslash y_1)g(x_1 \oslash y_2).\]

The paragraph following \cref{prop:HopfModLocPres}, read in light of \cref{rem:oslash}, lead to the next definition, which introduces the core notion of the present paper.

\begin{definition}\label{def:oslash} A \emph{left integral} $f$ in $\pred{B}$ is an element in $(B\oslash B)^*$ such that \eqref{form:integosl3} holds true.
We denote by  $\int_l \pred{B}$ the \emph{space of left integrals in} $\pred{B}$ so that
\[\textstyle \int_l \pred{B}=\{f:B\oslash B\to\Bbbk\mid x_1 f(x_2\oslash y)\oslash 1=f(x\oslash y_1)y_2\oslash 1,\text{ for all } x,y\in B\}.\]
A left integral $\lambda\in \int_l \pred{B}$ will be called a \emph{total integral} in case $\lambda(1\oslash 1)=1$.
\end{definition}

\begin{remark}\label{rem:LRints}
Note that if we choose $y=1$ in \eqref{form:integosl2}, then we get $f(x_2\oslash z_1)x_1\oslash z_2=f(x\oslash z)1\oslash 1$ for all $x,z\in B.$ This means that $f$ is, in particular, a usual left integral in the augmented algebra $(B\oslash B)^*$, with augmentation $\varepsilon_{(B\oslash B)^*} \colon (B\oslash B)^*\to \Bbbk,f\mapsto f(1\oslash 1).$ That is to say, $g*f = g(1 \oslash 1)f$ for all $g \in (B\oslash B)^*$.
Summing up, $\int_{l} {^{\star }B}
\subseteq \int_{l}\left( B\oslash B\right) ^{\ast }$ for any bialgebra $B$.
\end{remark}

\cref{sec:partcases} and \cref{sec:examples} are devoted to present results on, and examples of, integrals on bialgebras.

\subsection{The map \texorpdfstring{$i_B$}{i\_B}}\label{ssec:iB}
We already know from \cite[\S2.1]{ArMeSa} that for every bialgebra $B$, one can consider the canonical map $i_B \colon B\to B\oslash B,x\mapsto x\oslash 1$.
We summarise here some properties of $i_B$, as it is helpful in the study of integrals in $\pred{B}$, too.\medskip

First of all, recall that a \emph{right Hopf algebra} is a bialgebra with a \emph{right antipode}  $S^{r}$, i.e.\ a right convolution inverse of $\mathrm{Id}_{B}$. Similarly, a \emph{left Hopf algebra} is a bialgebra with a \emph{left antipode} $S^{l}$. It was shown, e.g.\ in \cite[Section 3]{Sar21}, that the functors $F = \left( -\right) \otimes B_{\bullet }^{\bullet } \colon \mathfrak{M}\to\mathfrak{M}_{B}^{B}$ and $G = \left(
-\right) ^{\mathrm{co}B} \colon \mathfrak{M}_{B}^{B}\rightarrow \mathfrak{M}$ of diagram \eqref{diag:funcbialg}
are part of an adjoint triple $E\dashv F\dashv G$, where $E=\overline{\left( -\right) }^{B} \colon \mathfrak{M}_{B}^{B}\rightarrow \mathfrak{M}$ maps a $B$-Hopf module $M$ to  $\overline{M}^{B}=\frac{M}{MB^{+}}$ where $B^{+}=\ker (\varepsilon _{B})$.
The following result collects the essential points of \cite[Theorem 3.7]{Sar21} and \cite[Proposition 2.4]{ArMeSa} and it characterises the bijectivity of $i_B$.

\begin{proposition}
\label{prop:Frobenius}Let $B$ be a bialgebra. The following are
equivalent.

\begin{enumerate}[label=(\arabic*),ref={\itshape (\arabic*)},itemsep=0.1cm]
\item The coinvariant functor $\left( -\right) ^{\mathrm{co}B}:\mathfrak{M}%
_{B}^{B}\rightarrow \mathfrak{M}$ is Frobenius.


\item The map $i_{B}:B\rightarrow B\oslash B,b\mapsto b\oslash 1_{B},$ is invertible.

\item\label{item:1sided3} $B$ is right Hopf algebra with anti-multiplicative and
anti-comultiplicative right antipode $S^{r}$.
\end{enumerate}
Moreover, if \ref{item:1sided3} holds true, then $i_{B}^{-1}\left( x\oslash y\right)
=xS^{r}\left( y\right) $ for every $x,y\in B.$
\end{proposition}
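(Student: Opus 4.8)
The plan is to prove the cycle of implications $(2) \Rightarrow (3) \Rightarrow (1) \Rightarrow (2)$ (or any convenient cycle), since the result is stated as a collection of the essential points of \cite[Theorem 3.7]{Sar21} and \cite[Proposition 2.4]{ArMeSa}. Most of the work has, in principle, already been done there, so the main task is to assemble the pieces and make the bridges explicit. I would begin with the equivalence $(1) \Leftrightarrow (2)$, which is the heart of the matter. Recall from the discussion preceding the statement that $(-)^{\mathrm{co}B} = G$ sits in an adjoint triple $E \dashv F \dashv G$ with $F = (-) \otimes B_\bullet^\bullet$ and $E = \overline{(-)}^B$. Saying that $G$ is Frobenius means that $G$ also has a right adjoint, equivalently (since $G$ already has the left adjoint $F$, and right adjoints are unique) that $E \cong G$, i.e. the canonical natural transformation comparing $\overline{M}^B = M/MB^+$ with $M^{\mathrm{co}B}$ is an isomorphism for every $B$-Hopf module $M$. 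The plan is to show this evaluation-at-$B$ criterion: by the structure theorem $F$ is fully faithful and one reduces, via a density/colimit argument on free Hopf modules $V \otimes B$, to checking the comparison on $B$ itself regarded as a Hopf module, where $\overline{B}^B = B \oslash B$ and $B^{\mathrm{co}B} = \Bbbk$; chasing the (co)units shows the comparison map $E \to G$ evaluated at $V \otimes B$ is, up to the identifications, precisely $V \otimes i_B$ composed with a canonical iso. Hence $G$ is Frobenius if and only if $i_B$ is invertible.

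Next I would handle $(2) \Leftrightarrow (3)$. For $(3) \Rightarrow (2)$: given an anti-(co)multiplicative right antipode $S^r$, define $j \colon B \oslash B \to B$ by $j(x \oslash y) = xS^r(y)$; one checks this is well defined on the quotient by $(B \otimes B)B^+$ using $\sum S^r(b_1)b_2 \otimes b_3 \mapsto$ counit together with anti-multiplicativity, then verifies $j \circ i_B = \mathrm{Id}_B$ trivially and $i_B \circ j = \mathrm{Id}_{B\oslash B}$ by a short Sweedler computation $xS^r(y_1) \oslash y_2 = x \oslash y$ using the defining property $\sum S^r(y_1)y_2 = \varepsilon(y)1$ together with the relation defining $\oslash$. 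This simultaneously establishes the final "Moreover" formula $i_B^{-1}(x \oslash y) = xS^r(y)$. For $(2) \Rightarrow (3)$: from an inverse $i_B^{-1}$, set $S^r(b) := (\varepsilon \otimes \mathrm{Id}_B)\big((\text{suitable lift of } i_B^{-1})(\ldots)\big)$ — concretely, $S^r$ is recovered by precomposing $i_B^{-1}$ with $b \mapsto 1 \oslash b$ and is forced to be a right convolution inverse of $\mathrm{Id}_B$ because $i_B$ is a morphism in $_{B}\mathfrak{M}$ and a coalgebra map (by \cref{rmk:BotBact}); anti-multiplicativity and anti-comultiplicativity of $S^r$ then drop out of the fact that $i_B^{-1}$ must be compatible with both the algebra and coalgebra structures on $B \oslash B$ recorded in \cref{rmk:BotBact}, by the usual uniqueness-of-convolution-inverse arguments.

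Since all three statements are thus equivalent, the cycle closes. The "Moreover" clause has already been obtained along the way in the proof of $(3) \Rightarrow (2)$.

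The step I expect to be the main obstacle is the equivalence $(1) \Leftrightarrow (2)$: concretely, making precise the identification of the comparison natural transformation $E \to G$ (or equivalently the unit/counit data witnessing $G$ Frobenius) with the map induced by $i_B$, and justifying the reduction from arbitrary Hopf modules to free ones. The cleanest route is probably to cite \cite[Theorem 3.7]{Sar21} directly for the equivalence of $(1)$ with "$i_B$ invertible" and of that with $(3)$, treating the present proposition as essentially a repackaging; if a self-contained argument is wanted, the density reduction needs the structure theorem for Hopf modules to guarantee every Hopf module is a colimit of free ones and that $E$, $G$ preserve the relevant colimits (both are left, resp.\ right, adjoints only on one side, so one must be a little careful — $G$ preserves limits, $E$ preserves colimits, and the comparison $E \to G$ being computed on free modules where both are "small" is what saves the day). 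The $(2) \Leftrightarrow (3)$ equivalence, by contrast, is a routine Sweedler-calculus verification and should not present real difficulties beyond bookkeeping.
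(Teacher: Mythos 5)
The paper offers no proof of this proposition at all: it is stated as a repackaging of \cite[Theorem 3.7]{Sar21} and \cite[Proposition 2.4]{ArMeSa}, so your ``cleanest route'' of citing those two results is exactly what the authors do, and your identification of $(1)\Leftrightarrow(2)$ as the part whose self-contained proof requires real work (matching the comparison transformation in the adjoint triple $E\dashv F\dashv G$ with the map induced by $i_B$) is accurate. That part of your plan is fine as a deferral to the references.

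Your self-contained sketch of $(3)\Rightarrow(2)$, however, contains a genuine error. You invoke ``the defining property $\sum S^{r}(y_1)y_2=\varepsilon(y)1$'', but that is the \emph{left} antipode axiom $S^{r}\conv \mathrm{Id}=u\varepsilon$; by the paper's convention a right antipode satisfies $\mathrm{Id}\conv S^{r}=u\varepsilon$, i.e.\ $\sum y_1S^{r}(y_2)=\varepsilon(y)1$. For a genuine right Hopf algebra that is not a Hopf algebra --- precisely the case this proposition is designed to cover, e.g.\ the free one-sided Hopf algebras of Green--Nichols--Taft --- one has $S^{r}\conv\mathrm{Id}\neq u\varepsilon$, so your verification of $i_B\circ j=\mathrm{Id}_{B\oslash B}$ collapses. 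The correct argument cannot avoid the anti-comultiplicativity of $S^{r}$, which your sketch never uses in this step (you use anti-multiplicativity only for well-definedness of $j$). The repair: apply the defining relation $ub_1\oslash vb_2=\varepsilon(b)\,u\oslash v$ with $b=S^{r}(w)$; anti-comultiplicativity and $\varepsilon\circ S^{r}=\varepsilon$ turn it into
\[
uS^{r}(w_2)\oslash vS^{r}(w_1)=\varepsilon(w)\,u\oslash v .
\]
Choosing $u=x$, $v=y_1$, $w=y_2$ and then using the correct right antipode axiom on the first two legs of $\Delta^{(2)}(y)$, so that $y_1S^{r}(y_2)\otimes y_3=1\otimes y$, gives
\[
xS^{r}(y)\oslash 1=xS^{r}(y_3)\oslash y_1S^{r}(y_2)=\varepsilon(y_2)\,x\oslash y_1=x\oslash y,
\]
which is what you need (and simultaneously the ``Moreover'' clause). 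Your outline of $(2)\Rightarrow(3)$ is essentially sound once made precise: setting $S^{r}(b)=i_B^{-1}(1\oslash b)$, the right antipode identity follows from left $B$-linearity of $i_B^{-1}$ in the first slot, anti-multiplicativity from its $B^{\cop}$-equivariance in the second slot, and anti-comultiplicativity from the fact that $i_B$ is a coalgebra map (\cref{rmk:BotBact}).
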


In particular, $i_B$ is bijective if $B$ is a Hopf algebra.
However, we are also interested in more general cases in which $i_B$ is either injective or surjective. In this direction, the following result reports the key points of \cite[Proposition 2.10]{ArMeSa} which we are going to use in the sequel, for the convenience of the reader.

\begin{proposition}
\label{lem:iB-inj}
Let $B$ be a bialgebra and consider the map $%
i_{B}:B\rightarrow B\oslash B,b\mapsto b\oslash 1_{B}.$
If $B$ embeds into a bialgebra $C$ with $i_{C}$ injective, then so is $i_{B}$. In particular, if $B$ embeds into a Hopf algebra, then $i_B$ is injective.
%
%
%
%
%
\end{proposition}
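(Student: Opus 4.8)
The plan is to reduce the second assertion to the first by exhibiting an explicit embedding of $B$ into a Hopf algebra and then to concentrate on proving the functoriality-type statement: if $\iota \colon B \hookrightarrow C$ is an injective bialgebra morphism and $i_C$ is injective, then $i_B$ is injective. For the second part, I would invoke the Hopf envelope $\mathrm H(B)$ together with the universal unit $\eta_B \colon B \to \mathrm H(B)$; however, $\eta_B$ need not be injective in general, so instead I would appeal to the standard fact (Takeuchi's free Hopf algebra construction, or the Manin--Tambara dual argument) that \emph{every} bialgebra which admits an injective morphism into \emph{some} Hopf algebra does so, and more to the point, that here we are only asked to handle bialgebras $B$ that do embed into a Hopf algebra, so this reduction is automatic once the first sentence is established: a Hopf algebra $C$ has $i_C$ invertible by \cref{prop:Frobenius}, in particular injective, so the hypothesis of the first sentence is met with this $C$.

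The heart of the argument is therefore the first sentence. Let $\iota \colon B \hookrightarrow C$ be an injective morphism of bialgebras. The key observation is that $\iota$ induces a commutative square
\[
\xymatrix{
B \ar[r]^-{i_B} \ar[d]_-{\iota} & B \oslash B \ar[d]^-{\iota \oslash \iota} \\
C \ar[r]^-{i_C} & C \oslash C
}
\]
where $\iota \oslash \iota$ is the map $x \oslash y \mapsto \iota(x) \oslash \iota(y)$, well-defined because $\iota$ is a morphism of bialgebras (so it carries $(B \otimes B)B^+$ into $(C \otimes C)C^+$), and the square commutes because $i_B(x) = x \oslash 1_B$ and $\iota(1_B) = 1_C$. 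Now chase the diagram: if $i_B(x) = 0$ then $(\iota \oslash \iota)(i_B(x)) = 0$, hence $i_C(\iota(x)) = 0$; since $i_C$ is injective, $\iota(x) = 0$; since $\iota$ is injective, $x = 0$. This proves $i_B$ is injective. The ``in particular'' clause then follows by taking $C$ to be the given Hopf algebra and using \cref{prop:Frobenius} to see that $i_C$ is invertible, hence injective.

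The one point requiring a little care --- and what I expect to be the only genuine obstacle --- is verifying that $\iota \oslash \iota$ is well-defined on the quotients, i.e.\ that $(\iota \otimes \iota)\bigl((B \otimes B)B^+\bigr) \subseteq (C \otimes C)C^+$. This is where one uses that $\iota$ is multiplicative and counital: writing a typical generator of $(B \otimes B)B^+$ as $\sum (x \otimes y)\Delta_B(b^+)$ with $b^+ \in B^+ = \ker \varepsilon_B$, applying $\iota \otimes \iota$ and using $(\iota \otimes \iota)\Delta_B = \Delta_C \iota$ together with $\varepsilon_C \iota = \varepsilon_B$ (so $\iota(b^+) \in C^+$) yields $\sum (\iota(x) \otimes \iota(y))\Delta_C(\iota(b^+)) \in (C \otimes C)C^+$, as needed. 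Everything else is a routine diagram chase, and since the paper has already collected the invertibility of $i_C$ for Hopf algebras in \cref{prop:Frobenius}, no further input is required.
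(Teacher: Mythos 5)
Your proof is correct: the well-definedness of $\iota \oslash \iota$ is exactly the point that needs checking, and once the square commutes the conclusion follows because $(\iota \oslash \iota) \circ i_B = i_C \circ \iota$ is injective, which forces $i_B$ to be injective (you do not even need $\iota \oslash \iota$ itself to be injective). The paper does not reprove this statement but merely cites \cite[Proposition 2.10]{ArMeSa}, and your naturality-plus-diagram-chase argument is the standard one behind that result; only your opening digression about the Hopf envelope is superfluous, since, as you then note, the ``in particular'' clause follows immediately from \cref{prop:Frobenius} applied to the ambient Hopf algebra.
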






In case $i_B$ is injective, we have an easier description of integrals in $\pred{B}$.

\begin{proposition}\label{pro:cosepiB}
Let $B$ be a bialgebra such that $i_B$ is injective. Then  $f\in \pred{B}$ is a left integral if and only if for all $x,y\in B$ we have
\begin{equation}\label{eq:futuresep}
x_1 f(x_2\oslash y)=f(x\oslash y_1)y_2.
\end{equation}
\end{proposition}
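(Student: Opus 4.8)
The statement is: if $i_B$ is injective, then $f \in \pred{B} = (B\oslash B)^*$ is a left integral (i.e.\ satisfies \eqref{form:integosl3}) if and only if $x_1 f(x_2\oslash y) = f(x\oslash y_1)y_2$ for all $x,y\in B$. The plan is to observe that \eqref{form:integosl3} is obtained from \eqref{eq:futuresep} simply by applying the map $i_B$ (which sends $z \mapsto z\oslash 1$) to both sides, since $x_1 f(x_2\oslash y)\oslash 1 = i_B\big(x_1 f(x_2\oslash y)\big)$ and $f(x\oslash y_1)y_2\oslash 1 = i_B\big(f(x\oslash y_1)y_2\big)$. Hence \eqref{eq:futuresep} always implies \eqref{form:integosl3}, with no hypothesis on $i_B$; this is the easy direction.

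For the converse, suppose $f$ is a left integral, so \eqref{form:integosl3} holds. Then $i_B\big(x_1 f(x_2\oslash y)\big) = x_1 f(x_2\oslash y)\oslash 1 = f(x\oslash y_1)y_2\oslash 1 = i_B\big(f(x\oslash y_1)y_2\big)$ for all $x,y\in B$. Since $i_B$ is injective by hypothesis, we may cancel it and conclude $x_1 f(x_2\oslash y) = f(x\oslash y_1)y_2$, which is \eqref{eq:futuresep}. That completes the argument.

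I do not expect any real obstacle here: the whole point is that \eqref{form:integosl3} is literally the image under $i_B$ of \eqref{eq:futuresep}, so injectivity of $i_B$ is exactly what is needed to reverse the implication. The only things to be slightly careful about are that both sides of \eqref{eq:futuresep} do land in $B$ (clear, since $f$ is scalar-valued and the expressions are $B$-linear combinations of products in $B$) and that the Sweedler-type sums are handled correctly, i.e.\ that the identity is understood as holding in $B\otimes B$ before applying $i_B \otimes \id$ or, more simply, that $i_B$ applied to a sum equals the sum of $i_B$ applied termwise, which is just $\Bbbk$-linearity of $i_B$. So the proof is essentially: \eqref{form:integosl3} $= (i_B \text{ applied to both sides of})$ \eqref{eq:futuresep}, and $i_B$ injective $\iff$ the two formulations are equivalent for every $f$.
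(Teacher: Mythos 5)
Your proof is correct and follows exactly the same route as the paper's: the easy direction applies $i_B$ to both sides of \eqref{eq:futuresep} to get \eqref{form:integosl3}, and the converse rewrites \eqref{form:integosl3} as $i_B\big(x_1 f(x_2\oslash y)\big)=i_B\big(f(x\oslash y_1)y_2\big)$ and cancels $i_B$ by injectivity. Nothing is missing.
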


\begin{proof} Clearly a map $f\in \pred{B}$ that fulfils $x_1 f(x_2\oslash y)=f(x\oslash y_1)y_2$, also verifies $x_1 f(x_2\oslash y)\oslash 1=f(x\oslash y_1)y_2\oslash 1$ i.e. \eqref{form:integosl3} holds true, for all $x,y\in B$, which means it is a left integral. Conversely, if  $f$ is a left integral in $\pred{B}$, then \eqref{form:integosl3} holds true or, equivalently, $i_B(x_1 f(x_2\oslash y))=i_B(f(x\oslash y_1)y_2)$ and hence $x_1 f(x_2\oslash y)=f(x\oslash y_1)y_2$, for all $x,y\in B$, by injectivity of $i_B$.
\end{proof}

In particular, the easier description provided in \cref{pro:cosepiB} allows us to related the new integrals with the classical ones.

\begin{remark}
\label{rem:oldandnew}
    Let $B$ be a bialgebra. Since $B^*$ is an augmented algebra, one can consider usual left integrals in $B^*$, i.e.
    \[\textstyle \int_l B^* = \left\{\lambda \colon B \to \Bbbk \mid b_1\lambda(b_2) = 1 \lambda(b) \text{ for all } b \in B\right\}.
    \]
    If the canonical map $i_B \colon B \to B \oslash B$ is injective, one can also remark that any integral $f \in \int_l\pred{B}$ in the new sense induces an integral in the classical sense by precomposing it with $i_B$: if we set $\lambda \coloneqq f \circ i_B$, then
    \[b_1\lambda(b_2) = b_1f(b_2 \oslash 1) \stackrel{\eqref{eq:futuresep}}{=} 1f(b \oslash 1) = 1 \lambda(b)\]
    for all $b \in B$. In this case, we have a linear map
    \begin{equation}\label{eq:intcomparison}
        \textstyle \omega \colon \int_l\pred{B} \to \int_lB^*, \qquad f \mapsto f \circ i_B.
    \end{equation}
\end{remark}

As we will see in \cref{ssec:cocomm}, the surjectivity of $i_B$ has enlightening implications in the study of integrals on cocommutative bialgebras and, in particular, on monoid bialgebras (to be treated separately in a forthcoming publication). The following result reports the more precise characterization of the surjectivity of $i_B$ that appears in \cite[Proposition 2.13]{ArMeSa}, for the sake of future reference.

\begin{proposition}\label{lem:iBsu}
  The following assertions are equivalent for a bialgebra $B$.
  \begin{enumerate}
      \item The map $i_B:B\to B\oslash B,\, x\mapsto x\oslash 1$, is surjective.
    \item There is $S\in \mathrm{End}_\Bbbk(B)$  such that $1\oslash y= S(y)\oslash 1$, for every $y\in B$.
    \item There is $S\in \mathrm{End}_\Bbbk(B)$  such that $x\oslash y= xS(y)\oslash 1$, for every $x,y\in B$.
     \item For every $y\in B$ there is $y'\in B$ such that $1\oslash y= y'\oslash 1$.
     \item For every $x,y\in B$ there is $y'\in B$ such that $x\oslash y= xy'\oslash 1$.
    \end{enumerate}
\end{proposition}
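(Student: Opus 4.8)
The plan is to establish the equivalences through the implications $(1)\Rightarrow(4)\Rightarrow(5)\Rightarrow(1)$, $(2)\Rightarrow(3)$, $(3)\Rightarrow(2)$, $(2)\Rightarrow(4)$ and, crucially, $(4)\Rightarrow(2)$; this suffices, since then $\{(1),(4),(5)\}$ are mutually equivalent, $\{(2),(3)\}$ are mutually equivalent, and $(2)\Leftrightarrow(4)$ bridges the two groups. Throughout I would use the left $B\otimes B$-module structure on $B\oslash B$ recalled in \cref{rmk:BotBact}, namely $(a\otimes b)\cdot(x\oslash y)=ax\oslash by$; in particular the identity $x\oslash y=(x\otimes 1)\cdot(1\oslash y)$, valid for all $x,y\in B$, is what makes everything run.

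First I would clear the routine implications. For $(1)\Rightarrow(4)$, apply surjectivity of $i_B$ to the particular element $1\oslash y\in B\oslash B$. For $(4)\Rightarrow(5)$, given $x,y\in B$ and $y'$ as in (4), compute $x\oslash y=(x\otimes 1)\cdot(1\oslash y)=(x\otimes 1)\cdot(y'\oslash 1)=xy'\oslash 1$. For $(5)\Rightarrow(1)$, write an arbitrary element of $B\oslash B$ as a finite sum $\sum_i x_i\oslash y_i$, choose $y_i'$ for each pair $(x_i,y_i)$ by (5), and conclude $\sum_i x_i\oslash y_i=\sum_i x_iy_i'\oslash 1=i_B\big(\sum_i x_iy_i'\big)$, so $i_B$ is onto. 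The implication $(2)\Rightarrow(3)$ is the same computation as $(4)\Rightarrow(5)$ with $y'=S(y)$, namely $x\oslash y=(x\otimes 1)\cdot(S(y)\oslash 1)=xS(y)\oslash 1$; its converse $(3)\Rightarrow(2)$ is obtained by setting $x=1$, and $(2)\Rightarrow(4)$ by taking $y'\coloneqq S(y)$ as witness.

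The only step with real content is $(4)\Rightarrow(2)$: promoting the ``for every $y$ there exists $y'$'' statement to an honest $\Bbbk$-linear map. To do this I would consider the $\Bbbk$-linear map $\phi\colon B\to B\oslash B$, $y\mapsto 1\oslash y$; condition (4) says precisely that $\phi(B)\subseteq i_B(B)$. Since $B$ is free as a $\Bbbk$-module, the surjection $i_B\colon B\twoheadrightarrow i_B(B)$ admits a $\Bbbk$-linear section $\sigma$ (concretely: lift $\phi$ on a basis of $B$ and extend by linearity), and then $S\coloneqq\sigma\circ\phi$ is a $\Bbbk$-linear endomorphism of $B$ with $i_B(S(y))=\phi(y)=1\oslash y$, i.e.\ $S(y)\oslash 1=1\oslash y$ for all $y$, which is (2).

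The main obstacle, such as it is, is precisely this linearization in $(4)\Rightarrow(2)$: all the remaining implications are formal once \cref{rmk:BotBact} is on the table, and the linearization itself is harmless because we work over a field, so that $B$ is free and every epimorphism of vector spaces splits. Assembling the implications yields $(1)\Leftrightarrow(4)\Leftrightarrow(5)$ and $(2)\Leftrightarrow(3)\Leftrightarrow(4)$, hence all five assertions are equivalent.
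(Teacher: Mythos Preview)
Your proof is correct. The paper does not actually supply its own proof of this proposition: it is stated as a quotation of \cite[Proposition 2.13]{ArMeSa} ``for the sake of future reference'', so there is nothing to compare against beyond noting that your argument is the straightforward one and that the only nontrivial step, $(4)\Rightarrow(2)$, is handled exactly as one would expect over a field (splitting the surjection $B\twoheadrightarrow i_B(B)$ to linearize the choice of $y'$).
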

Finally, recall from \cite[Remark 2.7]{ArMeSa} that if $B$ is a bialgebra and $f:B\to H$ is a bialgebra map into a Hopf algebra $H$ with antipode $S$, then we have a coalgebra map
\begin{equation}\label{eq:fhat}
\widehat f \colon B\oslash B \to H, \qquad x\oslash y \mapsto f(x)Sf(y),
\end{equation}
such that $\widehat{f} \circ i_B = f$.

\section{Integrals in cases of interest} \label{sec:partcases}

Here we investigate the existence of integrals in $\pred{B}$ in some contexts of interest and under additional assumptions on $B$.

\subsection{Integrals and coseparability} \label{ssec:cosep}
It is known that a Hopf algebra $H$ is coseparable as a coalgebra in $\mathfrak{M}_H$ if and only if it is cosemisimple, see e.g.\ \cite[Proposition 2.11]{AMS}. 
Moreover, by dual Maschke's Theorem, this is also equivalent to the existence of a usual left total integral $t\in H^*$, see e.g.\ \cite[Theorem 2.4.6]{Montgomery}.
Our first result shows how the coseparability of a bialgebra $B$ as a coalgebra in $\mathfrak{M}_B$ is close to the existence of a total integral in $\pred{B}$, but first we need to recall the following.

\begin{remark}
\label{rem:cosep}
    Let $B$ be a bialgebra, $D$ be a right $B$-module coalgebra and $C$ be a left $B$-module coalgebra. Recall that $D$ is coseparable in $\mathfrak{M}_B$ if and only if there is a $D$-bicomodule morphism $\theta \colon D\otimes D\rightarrow D$ in $\mathfrak{M}_B$ such that $\theta \circ \Delta _{D}=\mathrm{Id}$, where $D\otimes D$ is a $D$-bicomodule via $\Delta_D \otimes D$ and $D\otimes \Delta_D $. Symmetrically, $C$ is coseparable in $\prescript{}{B}{\mathfrak{M}}$ if and only if there is a $C$-bicomodule morphism $\vartheta \colon C\otimes C\rightarrow C$ in $\prescript{}{B}{\mathfrak{M}}$ such that $\vartheta \circ \Delta _{C}=\mathrm{Id}$, where $C\otimes C$ is a $C$-bicomodule via $\Delta_C \otimes C$ and $C\otimes \Delta_C $.

    It is well-known that to give a $D$-bicomodule morphism $\theta \colon D\otimes D\rightarrow D$ in $\mathfrak{M}$ is equivalent to give a morphism $f \colon D\otimes D\rightarrow \Bbbk $ in $ \mathfrak{M}$ such that
    \[
    x_1f(x_2\otimes y) = f(x\otimes y_1) y_2
    \]
    holds true for all $x,y\in D$ (and symmetrically for $C$). The correspondence is given by the assignments $\theta \mapsto f _{\theta } \coloneqq \varepsilon _{D}\theta $ and $f \mapsto \theta _{f}$ defined by $\theta _{f}(x\otimes y) \coloneqq x_{1}f (x_{2}\otimes y) = f (x\otimes y_{1})y_{2}$ for every $x,y\in D$.

    Now, if $\theta$ is right $B$-linear, then
    \[
    f_\theta\left(\left(x\otimes y\right)b\right) = \varepsilon_D\theta((x\otimes y)b)
     = \varepsilon_D(\theta(x\otimes y)b) = \varepsilon_D\theta(x\otimes y)\varepsilon_B(b) = f_\theta (x\otimes y)\varepsilon_B(b)
     \]
     i.e.\ $f_\theta\in \mathfrak{M}_B(D_{\bullet} \otimes D_{\bullet},\Bbbk) = \mathfrak{M}(D \oslash D,\Bbbk)$.
    Conversely, if $f \in \mathfrak{M}_B(D_{\bullet} \otimes D_{\bullet},\Bbbk) = \mathfrak{M}(D \oslash D,\Bbbk)$, then
    \[\theta_f((x\otimes y)b) = \theta_f(xb_1\otimes yb_2) = x_1b_1f(x_2b_2\oslash yb_3) = x_1b_1 f(x_2\oslash y)\varepsilon_B(b_2) = \theta_f(x\otimes y)b\]
    so that $\theta_f$ is a morphism in $\mathfrak{M}_B$. Hence, the existence of a $D$-bicomodule and right $B$-linear morphism $\theta \colon D\otimes D\rightarrow D$ is equivalent to the existence of a linear morphism $f \colon D\oslash D\rightarrow \Bbbk $ such that
    \[
    x_1f(x_2\oslash y) = f(x\oslash y_1) y_2 \qquad \text{ for all } x,y\in D.
    \]
    Symmetrically, for $C$, the existence of a $C$-bicomodule and left $B$-linear morphism $\vartheta \colon C\otimes C\rightarrow C$ is equivalent to the existence of a linear morphism $g \colon C\obslash C\rightarrow \Bbbk $ such that
    \[
    x_1g(x_2\obslash y) = g(x\obslash y_1) y_2 \qquad \text{ for all } x,y\in C.
    \]

\end{remark}

\begin{proposition}\label{prop:cosep}
Let $B$ be a bialgebra. Then $B$ is coseparable as a coalgebra in $\mathfrak{M}_B$ if and only if there is $f\in {\pred{B}}$ such that $f(1\oslash 1)=1$ and
\begin{equation}\label{fcosep}
   x_1f(x_2\oslash y)=f(x\oslash y_1) y_2 \qquad \text{ for all } x,y\in B.
  \end{equation}
  In particular,  if $B$ is coseparable as a coalgebra in $\mathfrak{M}_B$, then there is a total integral in $\pred{B}$.
\end{proposition}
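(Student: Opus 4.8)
The plan is to push both sides of the claimed equivalence through \cref{rem:cosep}, which already recasts the coseparability of $B$ as a coalgebra in $\mathfrak{M}_B$ in terms of maps $B\oslash B\to\Bbbk$. Recall from there (applied with $D=B$, which is a right $B$-module coalgebra since $B$ is a bialgebra) that $B$ is coseparable as a coalgebra in $\mathfrak{M}_B$ precisely when there is a $B$-bicomodule morphism $\theta\colon B\otimes B\to B$ in $\mathfrak{M}_B$ with $\theta\circ\Delta_B=\mathrm{Id}_B$, and that the assignments $\theta\mapsto f_\theta\coloneqq\varepsilon_B\circ\theta$ and $f\mapsto\theta_f$, where $\theta_f(x\otimes y)=x_1f(x_2\oslash y)=f(x\oslash y_1)y_2$, give a bijection between $B$-bicomodule morphisms $B\otimes B\to B$ in $\mathfrak{M}_B$ and elements $f\in\pred{B}=(B\oslash B)^*$ satisfying \eqref{fcosep}. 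Thus the whole statement reduces to checking that, for such an $f$, the splitting condition $\theta_f\circ\Delta_B=\mathrm{Id}_B$ holds if and only if $f(1\oslash 1)=1$.

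For this I would use the identity $b_1\oslash b_2=\varepsilon(b)\,(1\oslash 1)$ valid in $B\oslash B$, which is nothing but the case $x=y=1$ of the relation $xb_1\oslash yb_2=\varepsilon(b)(x\oslash y)$ recorded in \cref{not:oslash}. Together with coassociativity it gives, for any $f$ fulfilling \eqref{fcosep},
\[\theta_f(\Delta_B(x))=x_1f(x_2\oslash x_3)=x_1f\big((x_2)_1\oslash (x_2)_2\big)=x_1\varepsilon(x_2)\,f(1\oslash 1)=f(1\oslash 1)\,x,\]
so that indeed $\theta_f\circ\Delta_B=\mathrm{Id}_B$ exactly when $f(1\oslash 1)=1$. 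This settles both directions simultaneously: starting from a coseparating $\theta$, the map $f\coloneqq f_\theta$ satisfies \eqref{fcosep} by \cref{rem:cosep} and $f(1\oslash 1)=\varepsilon_B(\theta(\Delta_B(1)))=\varepsilon_B(1)=1$; conversely, from an $f\in\pred{B}$ with $f(1\oslash 1)=1$ and \eqref{fcosep}, the map $\theta_f$ is a $B$-bicomodule morphism in $\mathfrak{M}_B$ by \cref{rem:cosep} and splits $\Delta_B$ by the displayed computation.

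For the final ``in particular'' assertion, given such an $f$ one applies the canonical map $i_B\colon B\to B\oslash B$, $x\mapsto x\oslash 1$, to both sides of \eqref{fcosep} to obtain \eqref{form:integosl3}; hence $f$ is a left integral in $\pred{B}$ in the sense of \cref{def:oslash}, and it is total since $f(1\oslash 1)=1$.

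I do not anticipate a genuine obstacle; the single delicate point is not to over-demand, at the level of $f$, the stronger-looking identity $f(x_1\oslash x_2)=\varepsilon(x)$ for all $x$ (which is exactly what the splitting $\theta_f\circ\Delta_B=\mathrm{Id}_B$ unwinds to on the nose), but rather to observe that, thanks to $b_1\oslash b_2=\varepsilon(b)(1\oslash 1)$, this identity is automatically equivalent to the single scalar normalization $f(1\oslash 1)=1$.
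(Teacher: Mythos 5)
Your proposal is correct and follows essentially the same route as the paper: reduce via \cref{rem:cosep} to the existence of $f\in\pred{B}$ satisfying \eqref{fcosep}, then use the relation $b_1\oslash b_2=\varepsilon(b)(1\oslash 1)$ to show that the splitting condition $\theta_f\circ\Delta_B=\mathrm{Id}_B$ is equivalent to the single normalization $f(1\oslash 1)=1$, and finally apply $i_B$ to \eqref{fcosep} to deduce \eqref{form:integosl3} for the ``in particular'' part. No gaps.
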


\begin{proof}
By \cref{rem:cosep}, we know that $B$ is coseparable as a coalgebra in $\mathfrak{M}_B$ if and only if there is a $B$-bicomodule morphism $\theta \colon B\otimes B\to B$ in $\mathfrak{M}_B$ such that $\theta \circ \Delta_B = \id$ and we know that the existence of $\theta$ $B$-bicolinear and right $B$-linear is equivalent to the existence of $f \colon B \oslash B \to \Bbbk$ such that \eqref{fcosep} holds true.
 If we further require that $\theta \circ \Delta_B = \id$, then $f_\theta(1_B\oslash 1_B)=\varepsilon\theta\Delta_B(1_B)=\varepsilon_B(1_B)=1$. Conversely, if we further require that $f(1_B\oslash 1_B)=1$, then  $\theta_f\Delta_B(x)=\theta_f(x_1\otimes x_2)= x_1f(x_2\oslash x_3)= x_1f(1_B\oslash 1_B)\varepsilon(x_2)=x$ so that $\theta_f \circ \Delta_B = \id$.

 We have so proved that $B$ is coseparable as a coalgebra in $\mathfrak{M}_B$ if and only if there is $f\in \mathfrak{M}(B \oslash B,\Bbbk)$ such that $f(1\oslash 1)=1$ and \eqref{fcosep} holds true for every $x,y\in B$.

Concerning the second part of the statement, it is clear that if $f\in {\pred{B}}$ verifies $f(1\oslash 1)=1$ and \eqref{fcosep}, then \eqref{form:integosl3} is also true and hence $f$ is a total integral.
\end{proof}

\begin{theorem}[Dual Maschke-type theorem]\label{cor:totiBinj}
Let $B$ be a bialgebra such that $i_B$ is injective. Then there is a total integral in $\pred{B}$ if and only if $B$ is coseparable as a coalgebra in $\mathfrak{M}_B$.
\end{theorem}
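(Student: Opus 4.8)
The plan is to recognise that this statement packages together two results already at our disposal, so that essentially no new computation is needed. For the ``if'' direction I would simply invoke \cref{prop:cosep}: if $B$ is coseparable as a coalgebra in $\mathfrak{M}_B$, then the map $f$ it produces satisfies $f(1 \oslash 1)=1$ and \eqref{fcosep}, hence in particular \eqref{form:integosl3}, so it is a total integral in $\pred{B}$. Note that this half does not use the injectivity of $i_B$ at all, so it holds for every bialgebra.

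For the ``only if'' direction I would start from a total integral $\lambda \in \pred{B}$, so that $\lambda(1 \oslash 1)=1$ and \eqref{form:integosl3} holds. The single real step is to promote the identity \eqref{form:integosl3}, which a priori only holds after applying $i_B$, to the genuine identity
\[x_1\lambda(x_2 \oslash y) = \lambda(x \oslash y_1)y_2 \qquad \text{for all } x,y \in B.\]
But this is exactly what \cref{pro:cosepiB} grants when $i_B$ is injective: there, being a left integral in $\pred{B}$ is shown to be equivalent to \eqref{eq:futuresep}, which is the displayed equation above. Thus $\lambda$ satisfies \eqref{fcosep} together with the normalisation $\lambda(1 \oslash 1)=1$, and \cref{prop:cosep} then yields that $B$ is coseparable as a coalgebra in $\mathfrak{M}_B$.

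I do not foresee any obstacle here: all the substance — the translation between $B$-bicomodule, right $B$-linear morphisms $B \otimes B \to B$ and linear maps $B \oslash B \to \Bbbk$, and the observation that the normalisation $\theta_f \circ \Delta_B = \id$ corresponds to $f(1 \oslash 1)=1$ — has been extracted beforehand in \cref{rem:cosep}, \cref{prop:cosep} and \cref{pro:cosepiB}. The only point deserving attention is conceptual rather than technical: conditions \eqref{form:integosl3} and \eqref{fcosep} differ precisely by post-composition with $i_B$, so injectivity of $i_B$ is exactly the hypothesis that makes the two equivalent, thereby closing the loop between total integrals in $\pred{B}$ and coseparability of $B$ in $\mathfrak{M}_B$.
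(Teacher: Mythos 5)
Your proposal is correct and follows exactly the paper's own argument, which likewise deduces the theorem by combining \cref{pro:cosepiB} (injectivity of $i_B$ upgrades \eqref{form:integosl3} to \eqref{eq:futuresep}, i.e.\ \eqref{fcosep}) with \cref{prop:cosep}. Your expanded write-up, including the observation that the ``if'' direction needs no hypothesis on $i_B$, is a faithful unpacking of the paper's one-line proof.
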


\begin{proof}
    It follows from \cref{pro:cosepiB} and \cref{prop:cosep}.
\end{proof}


Since $B\oslash B$ is a coalgebra in $_{B\otimes B^\cop}\mathfrak{M}$  (see \cref{rmk:BotBact}), it is in particular a coalgebra both in $_{B}\mathfrak{M}$ and in $_{B^\cop}\mathfrak{M}$. Thus, we can wonder whether its cosepability therein could also be related to the existence of a total integral. \cref{cor:BoslBcosepint}, which follows from next result, provides a first answer to this question.


\begin{proposition}
\label{prop:BoslashBcosep}
Let $B$ be a bialgebra. Then
\begin{enumerate}[label=\roman*),ref={\itshape \roman*)},leftmargin=*]

\item\label{item:cosep1} $B\oslash B$ is coseparable as a coalgebra in $_{B}\mathfrak{M}$ if and only if there is $\beta \colon \left( B\oslash B\right) \obslash \left( B\oslash B\right) \rightarrow \Bbbk $ in $\mathfrak{M}$
such that $\beta \left( \left( 1\oslash 1\right) \obslash \left( 1\oslash
1\right) \right) =1$ and
\begin{equation}
u_{1}\beta (u_{2}\obslash v)=\beta (u\obslash v_{1})v_{2} \qquad \text{ for all } u,v\in B\oslash B, \label{oslcosep}
\end{equation}
where $\obslash$ is taken with respect to the left $B$-action.

\item\label{item:cosep2} $B\oslash B$ is coseparable as a coalgebra in $_{B^\cop}\mathfrak{M}$ if and only if there is $\gamma \colon \left( B\oslash B\right) \obslash \left( B\oslash B\right) \rightarrow \Bbbk $ in $\mathfrak{M}$ such that $\gamma \left( \left( 1\oslash 1\right) \obslash \left( 1\oslash 1\right) \right) =1$ and \eqref{oslcosep}, i.e.
\begin{equation}\label{eq:oslcosepcop}
u_{1}\gamma (u_{2}\obslash v)=\gamma (u\obslash v_{1})v_{2} \qquad \text{ for all } u,v\in B\oslash B,
\end{equation}
where $\obslash$ is taken with respect to the left $B^\cop$-action.
\end{enumerate}
\end{proposition}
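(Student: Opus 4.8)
The plan is to obtain both items by specializing the ``left‑handed'' half of \cref{rem:cosep} to the coalgebra $C\coloneqq B\oslash B$. By \cref{rmk:BotBact}, $B\oslash B$ is a coalgebra in $_{B}\mathfrak{M}$ via $a\cdot(x\oslash y)=ax\oslash y$ and a coalgebra in $_{B^{\cop}}\mathfrak{M}$ via $a\cdot(x\oslash y)=x\oslash ay$, the comultiplication and counit being in either case those of \eqref{def:oslashcoalg}. Then, by \cref{rem:cosep}, $B\oslash B$ is coseparable as a coalgebra in $_{B}\mathfrak{M}$ (resp.\ in $_{B^{\cop}}\mathfrak{M}$) if and only if there is a $(B\oslash B)$‑bicomodule and left $B$‑linear (resp.\ left $B^{\cop}$‑linear) morphism $\vartheta\colon(B\oslash B)\otimes(B\oslash B)\to B\oslash B$ with $\vartheta\circ\Delta_{B\oslash B}=\mathrm{Id}$; and, still by \cref{rem:cosep} but dropping the last requirement, the existence of such a $\vartheta$ is equivalent to the existence of $\beta\colon(B\oslash B)\obslash(B\oslash B)\to\Bbbk$ satisfying \eqref{oslcosep}, the $\obslash$ being formed with respect to the relevant left action; the correspondence is $\vartheta\mapsto\varepsilon_{B\oslash B}\circ\vartheta$ and $\beta\mapsto\vartheta_{\beta}$, with $\vartheta_{\beta}(u\otimes v)=u_{1}\beta(u_{2}\obslash v)=\beta(u\obslash v_{1})v_{2}$. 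It therefore only remains to prove that, in the presence of \eqref{oslcosep}, the condition $\vartheta_{\beta}\circ\Delta_{B\oslash B}=\mathrm{Id}$ is equivalent to $\beta\big((1\oslash1)\obslash(1\oslash1)\big)=1$. Granting this, \ref{item:cosep1} is exactly the first equivalence, and \ref{item:cosep2} follows word for word with $B^{\cop}$ in place of $B$, since the coalgebra structure of $B\oslash B$ --- hence equation \eqref{oslcosep}, there written as \eqref{eq:oslcosepcop} --- does not change, only the module structure (and thus the $\obslash$) does; this is precisely why the two conditions on $\beta$ and $\gamma$ look identical.

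One implication of the normalization is immediate, as in \cref{prop:cosep}: since $\Delta_{B\oslash B}(1\oslash1)=(1\oslash1)\otimes(1\oslash1)$ and $\varepsilon_{B\oslash B}(1\oslash1)=1$, applying $\varepsilon_{B\oslash B}$ to $\vartheta_{\beta}\big(\Delta_{B\oslash B}(1\oslash1)\big)=1\oslash1$ gives $\beta\big((1\oslash1)\obslash(1\oslash1)\big)=1$. For the converse, assume \eqref{oslcosep} and $\beta\big((1\oslash1)\obslash(1\oslash1)\big)=1$, and put $\varphi\coloneqq\vartheta_{\beta}\circ\Delta_{B\oslash B}$ and $\alpha\coloneqq\varepsilon_{B\oslash B}\circ\varphi\in(B\oslash B)^{*}$. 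As $\vartheta_{\beta}$ is a $(B\oslash B)$‑bicomodule map, $\varphi$ is a $(B\oslash B)$‑bicomodule endomorphism of the coalgebra $B\oslash B$; hence $\varphi(u)=u_{1}\alpha(u_{2})=\alpha(u_{1})u_{2}$ for all $u$, and $\varphi=\mathrm{Id}$ is the same as $\alpha=\varepsilon_{B\oslash B}$. Unwinding \eqref{def:oslashcoalg} and the definition of $\obslash$ --- which in case \ref{item:cosep1} absorbs the left‑module leg of each $\oslash$ --- one finds $\alpha(x\oslash y)=\varepsilon_{B}(x)\,\beta\big((1\oslash y_{2})\obslash(1\oslash y_{1})\big)$; in particular $\alpha(b\cdot w)=\varepsilon_{B}(b)\alpha(w)$ for $b\in B$, so $\varphi$ is left $B$‑linear, and since $B\oslash B$ is generated as a left $B$‑module by the elements $1\oslash y$ it is enough to check $\varphi(1\oslash y)=1\oslash y$ for every $y\in B$.

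The remaining, and main, difficulty is to use \eqref{oslcosep} itself to push the mere normalization up to $\varphi(1\oslash y)=1\oslash y$. Here I would argue as follows. Specializing \eqref{oslcosep} at $v=1\oslash1$ gives $u_{1}\delta(u_{2})=\delta(u)(1\oslash1)$ for all $u$, where $\delta\coloneqq\beta(-\obslash(1\oslash1))\in(B\oslash B)^{*}$; that is, $\delta$ is a total left integral in the augmented algebra $(B\oslash B)^{*}$ (augmentation $\gamma\mapsto\gamma(1\oslash1)$, as in \cref{rem:LRints}). Symmetrically, specializing at $u=1\oslash1$ shows that $\delta'\coloneqq\beta((1\oslash1)\obslash-)$ is a total right integral in $(B\oslash B)^{*}$. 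Applying the left integral $\delta$ to \eqref{oslcosep}, read as an identity in $B\oslash B$, and using $\beta(u\obslash -)\ast\delta=\delta(u)\,\delta$, one obtains $\delta(u_{1})\beta(u_{2}\obslash v)=\delta(u)\delta(v)$; evaluating at $u=1\oslash1$ yields $\delta'=\delta$, so $\delta$ is in fact a total two‑sided integral. Finally, feeding $\delta=\delta'$, the normalization, and the explicit comultiplication \eqref{def:oslashcoalg} back into \eqref{oslcosep}, one computes $\varphi(1\oslash y)$ and checks, against every linear functional on $B\oslash B$, that it equals $1\oslash y$, i.e.\ $\alpha=\varepsilon_{B\oslash B}$ and $\varphi=\mathrm{Id}$. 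The subtle point throughout is exactly this last step: because $(B\oslash B)\obslash(B\oslash B)$ absorbs only the module leg of the two $\oslash$'s, the straightforward computation of $\vartheta_{\beta}\circ\Delta_{B\oslash B}$ in the style of \cref{prop:cosep} does not collapse all the way to $\mathrm{Id}$, and one genuinely has to invoke \eqref{oslcosep} --- through the induced integral $\delta=\delta'$ --- to conclude.
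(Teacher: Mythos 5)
Your reduction via \cref{rem:cosep}, the easy direction of the normalization, and your intermediate observations (that $\alpha(x\oslash y)=\varepsilon_{B}(x)\,\beta\big((1\oslash y_{2})\obslash(1\oslash y_{1})\big)$ in case \ref{item:cosep1}, that $\delta\coloneqq\beta\big(-\obslash(1\oslash 1)\big)$ is a total left integral in $(B\oslash B)^{*}$, and that $\delta'=\delta$) are all correct. The problem is the last step, which is the heart of the converse: you never actually derive $\beta\big((1\oslash y_{2})\obslash(1\oslash y_{1})\big)=\varepsilon_{B}(y)$. The identity you extract, $\delta(u_{1})\beta(u_{2}\obslash v)=\delta(u)\delta(v)$, only controls $\beta$ after smearing its first argument against $\delta$; there is no way to undo that smearing and recover the pointwise value $\beta(u\obslash v)$, so ``feeding $\delta=\delta'$ back into \eqref{oslcosep}'' does not close the argument. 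As written, the crucial implication (normalization $\Rightarrow$ $\vartheta_{\beta}\circ\Delta=\mathrm{Id}$) is asserted, not proved.

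Moreover, your diagnosis of where the difficulty lies is mistaken: the direct computation does collapse, and no integral detour (indeed, no further use of \eqref{oslcosep}) is needed. The point you are missing is that the diagonal left $B$-action on $(B\oslash B)\obslash(B\oslash B)$ is killed in the quotient, so one may insert $\varepsilon_{B}(y_{1})$ as $y_{1}\cdot\big((1\oslash y_{3})\obslash(1\oslash y_{2})\big)$, expand the diagonal action to obtain $(y_{1}\oslash y_{4})\obslash(y_{2}\oslash y_{3})$, and then apply the defining relation $ab_{1}\oslash cb_{2}=(a\oslash c)\varepsilon_{B}(b)$ of $B\oslash B$ inside each tensor factor to reduce everything to $(1\oslash 1)\obslash(1\oslash 1)$; the normalization then gives $\beta\big((1\oslash y_{2})\obslash(1\oslash y_{1})\big)=\varepsilon_{B}(y)$, whence $\alpha=\varepsilon$ and $\vartheta_{\beta}\circ\Delta=\mathrm{Id}$. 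The same trick, with the roles of the two legs of $\oslash$ interchanged, handles item \ref{item:cosep2}; your ``word for word'' replacement of $B$ by $B^{\cop}$ also glosses over the fact that the two actions interact differently with $\Delta(x\oslash y)=(x_{1}\oslash y_{2})\otimes(x_{2}\oslash y_{1})$, so that computation must be redone, although it is entirely parallel.
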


\begin{proof}
Let $A$ be any bialgebra (e.g., $B$ or $B^\cop$).
We already know from \cref{rem:cosep} that $C \coloneqq B\oslash B$ is coseparable as a coalgebra in $_{A}\mathfrak{M}$, if and only if there is a $C$-bicomodule morphism $\vartheta \colon C\otimes C\rightarrow C$ in $_{A}\mathfrak{M}$ such that $\vartheta \circ \Delta _{C}=\mathrm{Id}$ and that the existence of $\vartheta$ is equivalent to the existence of $\beta \colon C \obslash C \to \Bbbk$ (or $\gamma$) such that \eqref{oslcosep} (or \eqref{eq:oslcosepcop}) holds true.

\begin{itemize}[leftmargin=0.5cm]
\item[\ref{item:cosep1}]
%
%
If we further require that $\vartheta \circ \Delta _{C}=\mathrm{Id}$, then%
\begin{equation*}
\beta _{\vartheta }\left( \left( 1\oslash 1\right) \obslash \left( 1\oslash
1\right) \right) = \varepsilon _{C}\vartheta \Delta _{C}\left(  1\oslash
1\right)  =\varepsilon _{C}\left( 1\oslash 1\right) =1.
\end{equation*}
Conversely, if we further require that $\beta \left( \left( 1\oslash
1\right) \obslash \left( 1\oslash 1\right) \right) =1$, then
\begin{align*}
\vartheta _{\beta }\Delta _{C}(x\oslash y) &=\vartheta _{\beta }\left( \left(
x_{1}\oslash y_{2}\right) \otimes \left( x_{2}\oslash y_{1}\right) \right)
=\left( x_{1}\oslash y_{3}\right) \beta \left( \left( x_{2}\oslash
y_{2}\right) \obslash \left( x_{3}\oslash y_{1}\right) \right)  \\
&=\left( x_{1}\oslash y_{3}\right) \beta \left( x_{2}\left( 1\oslash
y_{2}\right) \obslash x_{3}\left( 1\oslash y_{1}\right) \right) \\
&= \left( x_{1}\oslash y_{3}\right) \beta \left( x_{2}\left(\left( 1\oslash
y_{2}\right) \obslash \left( 1\oslash y_{1}\right)\right) \right)  \\
&=\left( x\oslash y_{3}\right) \beta \left( \left( 1\oslash
y_{2}\right) \obslash \left( 1\oslash y_{1}\right)\right)  \\
&=\left( x\oslash y_{4}\right) \beta \left( y_{1}\left(\left( 1\oslash
y_{3}\right) \obslash \left( 1\oslash y_{2}\right)\right)\right)  \\
&=\left( x\oslash y_{5}\right) \beta \left( \left( y_{1}\oslash
y_{4}\right) \obslash \left( y_{2}\oslash y_{3}\right)\right)  \\
&=\left( x\oslash y_{3}\right) \beta \left( \left( y_{1}\oslash
y_{2}\right) \obslash \left( 1\oslash 1\right)\right)  \\
&=\left( x\oslash y\right) \beta \left( \left( 1\oslash 1\right) \obslash
\left( 1\oslash 1\right) \right) =x\oslash y.
\end{align*}

\item[\ref{item:cosep2}]
If we further require that $\vartheta \circ \Delta _{C}=\mathrm{Id}$, then%
\begin{equation*}
\gamma _{\vartheta }\left( \left( 1\oslash 1\right) \obslash \left( 1\oslash
1\right) \right) =\varepsilon _{C}\vartheta \Delta _{C}\left(  1\oslash
1\right)  =\varepsilon _{C}\left( 1\oslash 1\right) =1.
\end{equation*}
Conversely, if we further require that $\gamma \left( \left( 1\oslash
1\right) \obslash \left( 1\oslash 1\right) \right) =1$, then
\begin{align*}
\vartheta _{\gamma }\Delta _{C}(x\oslash y) &=\vartheta _{\gamma }\left( \left(
x_{1}\oslash y_{2}\right) \otimes \left( x_{2}\oslash y_{1}\right) \right)
=\left( x_{1}\oslash y_{3}\right) \gamma \left( \left( x_{2}\oslash
y_{2}\right) \obslash \left( x_{3}\oslash y_{1}\right) \right)  \\
&=\left( x_{1}\oslash y_{3}\right) \gamma \left( y_{2}\left( x_{2}\oslash
 1 \right) \obslash y_{1}\left( x_{3}\oslash 1\right) \right) \\
&= \left( x_{1}\oslash y_{2}\right) \gamma \left( y_{1}\left(\left( x_{2}\oslash
1\right) \obslash \left( x_{3}\oslash 1\right)\right) \right)  \\
&=\left( x_{1}\oslash y\right) \gamma \left( \left( x_{2}\oslash
1\right) \obslash \left( x_{3}\oslash 1\right)\right)  \\
&=\left( x_{1}\oslash y\right) \gamma \left( x_{4}\left(\left( x_{2}\oslash 1\right) \obslash \left( x_{3}\oslash 1\right)\right)\right)  \\
&=\left( x_{1}\oslash y\right) \gamma \left( \left( x_{2}\oslash
x_{5}\right) \obslash \left( x_{3}\oslash x_{4}\right)\right)  \\
&=\left( x_{1}\oslash y\right) \gamma \left( \left( x_{2}\oslash
x_{3}\right) \obslash \left( 1\oslash 1\right)\right)  \\
&=\left( x\oslash y\right) \gamma \left( \left( 1\oslash 1\right) \obslash
\left( 1\oslash 1\right) \right) =x\oslash y. \qedhere
\end{align*}
\end{itemize}%
\end{proof}

\begin{corollary}
\label{cor:BoslBcosepint}
Let $B$ be a bialgebra. Suppose that $B \oslash B$ is coseparable as a coalgebra in $_{B^\cop}\mathfrak{M}$ and consider $\gamma $ from \cref{prop:BoslashBcosep}\,\ref{item:cosep2}. Then, the map $f \colon B\oslash B\rightarrow \Bbbk $ defined by setting \[f\left( x\oslash y\right) \coloneqq \gamma (\left( x\oslash 1\right) \obslash \left( y\oslash 1\right) ) \qquad \text{for all } x,y \in B,\] is a total integral in $\pred{B}$.
\end{corollary}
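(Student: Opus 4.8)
The plan is to lean on two facts. First, the canonical map $i_B\colon B\to B\oslash B$, $b\mapsto b\oslash 1$, is a morphism of coalgebras with $i_B(1_B)=1\oslash 1$ (see \cref{rmk:BotBact}), so that $\Delta_{B\oslash B}(i_B(b))=\sum i_B(b_1)\otimes i_B(b_2)$ and $\varepsilon_{B\oslash B}(i_B(b))=\varepsilon_B(b)$ for all $b\in B$. Second, the map $\gamma$ from \cref{prop:BoslashBcosep}\,\ref{item:cosep2} satisfies $\gamma\big((1\oslash1)\obslash(1\oslash1)\big)=1$ and $\sum u_1\gamma(u_2\obslash v)=\sum\gamma(u\obslash v_1)\,v_2$ in $B\oslash B$ for all $u,v\in B\oslash B$ (equation \eqref{eq:oslcosepcop}). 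Everything will come from feeding $u=i_B(x)$ and $v=i_B(y)$ into this last identity.

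First I would deal with well-definedness, namely that $x\otimes y\mapsto\gamma\big((x\oslash1)\obslash(y\oslash1)\big)$ descends along the quotient $B\otimes B\twoheadrightarrow B\oslash B$. Concretely this means the composite $B\otimes B\xrightarrow{i_B\otimes i_B}(B\oslash B)^{\otimes 2}\twoheadrightarrow(B\oslash B)\obslash(B\oslash B)\xrightarrow{\gamma}\Bbbk$ annihilates $(B\otimes B)\Delta(B^+)$, i.e.\ $\sum(xb_1\oslash1)\obslash(yb_2\oslash1)=\varepsilon(b)\,(x\oslash1)\obslash(y\oslash1)$ in $(B\oslash B)\obslash(B\oslash B)$ for all $x,y,b\in B$. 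I would establish this by a Sweedler computation that plays the relations defining $\oslash$ (\cref{not:oslash}) against those defining $\obslash$ relative to the left $B^\cop$-action on $B\oslash B$, using that $1\oslash1$ is grouplike in $B\oslash B$; I expect this bookkeeping, rather than anything conceptual, to be the main obstacle of the proof (note the naive ``lift'' $\vartheta\circ(i_B\otimes i_B)$ to $B\oslash B$ itself need not factor, so the role of $\varepsilon_{B\oslash B}$, i.e.\ of passing to $\gamma$, is essential here).

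With $f\colon B\oslash B\to\Bbbk$, $f(x\oslash y)=\gamma\big((x\oslash1)\obslash(y\oslash1)\big)$, thus available, the rest is immediate. Totality holds because $f(1\oslash1)=\gamma\big(i_B(1)\obslash i_B(1)\big)=\gamma\big((1\oslash1)\obslash(1\oslash1)\big)=1$. For the integral property, by \cref{rem:oslash} and \cref{def:oslash} it suffices to check \eqref{form:integosl3}; fixing $x,y\in B$ and abbreviating $u:=i_B(x)$, $v:=i_B(y)$, one computes in $B\oslash B$
\begin{align*}
\sum x_1\, f(x_2\oslash y)\oslash1
&=\sum i_B(x_1)\,\gamma\big(i_B(x_2)\obslash i_B(y)\big)
=\sum u_1\,\gamma(u_2\obslash v)\\
&=\sum\gamma(u\obslash v_1)\,v_2
=\sum\gamma\big(i_B(x)\obslash i_B(y_1)\big)\,i_B(y_2)
=\sum f(x\oslash y_1)\,y_2\oslash1,
\end{align*}
where the second and fourth equalities use that $i_B$ is a coalgebra morphism and the middle equality is \eqref{eq:oslcosepcop} with $u=i_B(x)$, $v=i_B(y)$. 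This exhibits $f$ as a total integral in $\pred{B}$, as desired.
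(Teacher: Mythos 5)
Your proposal follows the same route as the paper's proof: establish that $f$ is well defined on $B\oslash B$, read off totality from $\gamma\left(\left(1\oslash1\right)\obslash\left(1\oslash1\right)\right)=1$, and obtain \eqref{form:integosl3} by specialising \eqref{eq:oslcosepcop} to $u=x\oslash1$, $v=y\oslash1$. Your packaging of the last step as ``push \eqref{eq:oslcosepcop} along the coalgebra map $i_B$'' is a slightly slicker version of what the paper does by writing \eqref{eq:oslcosepcop} on elements and setting $x'=y'=1$; that part of your argument, and the totality check, are complete and correct.

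The one substantive computation, however, is announced but not carried out: you correctly isolate well-definedness as the only real work and even state the precise identity to be verified, but then write ``I would establish this by a Sweedler computation\dots''. For the record, the identity you state does hold, already in $\left(B\oslash B\right)\obslash\left(B\oslash B\right)$ before applying $\gamma$:
\begin{equation*}
\textstyle\sum \left(xb_1\oslash1\right)\obslash\left(yb_2\oslash1\right)
=\sum b_3\cdot\Big(\left(xb_1\oslash1\right)\obslash\left(yb_2\oslash1\right)\Big)
=\sum \left(xb_1\oslash b_4\right)\obslash\left(yb_2\oslash b_3\right)
=\varepsilon(b)\,\left(x\oslash1\right)\obslash\left(y\oslash1\right),
\end{equation*}
where one first uses the $\obslash$-relation for the left $B^{\cop}$-action, then the $\oslash$-relation in the second tensorand ($\sum yb_2\oslash b_3=\varepsilon(b_2)\,y\oslash1$ after renumbering) and finally in the first. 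This is exactly the chain of equalities in the paper's proof (with $\gamma$ applied throughout). Incidentally, since the identity holds before applying $\gamma$, your parenthetical about the lift $\vartheta\circ(i_B\otimes i_B)$ failing to factor is beside the point: what makes the factorisation work is landing in the $\obslash$-quotient, not the passage from $\vartheta$ to $\gamma=\varepsilon\circ\vartheta$. As written your proof is incomplete at this one step; with the displayed computation filled in it coincides with the paper's argument.
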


\begin{proof}
    Set $f\left( x\otimes y\right) \coloneqq \gamma (\left( x\oslash 1\right)
    \obslash \left( y\oslash 1\right) )$ and let us check that $f$ is a map from $B\oslash B$.
    Indeed, for $x,y,b\in B$, we have
    \begin{align*}
    f\left( xb_{1}\otimes yb_{2}\right)  &= \gamma (\left( xb_{1}\oslash 1\right) \obslash \left( yb_{2}\oslash 1\right) ) = \varepsilon _{B}\left( b_{3}\right) \gamma (\left( xb_{1}\oslash 1\right) \obslash \left( yb_{2}\oslash 1\right) )  \\
    & = \gamma (b_{3}\left( \left( xb_{1}\oslash 1\right) \obslash \left(
    yb_{2}\oslash 1\right) \right) ) = \gamma (b_{4}\left( xb_{1}\oslash 1\right) \obslash b_{3}\left(
    yb_{2}\oslash 1\right) )  \\
    & = \gamma (\left( xb_{1}\oslash b_{4}\right) \obslash \left(
    yb_{2}\oslash b_{3}\right) ) = \gamma (\left( xb_{1}\oslash b_{3}\right) \obslash \left( y\oslash 1\right)\varepsilon_B(b_{2}) )  \\
    & = \gamma (\left( xb_{1}\oslash b_{2}\right) \obslash \left(
    y\oslash 1\right) ) = \gamma \left( \left( x\oslash 1\right) \obslash \left( y\oslash 1\right)
    \right) \varepsilon _{B}\left( b\right) =f\left( x\otimes y\right)
    \varepsilon _{B}\left( b\right) .
    \end{align*}
    Thus we can define $f \colon B\oslash B\to \Bbbk,\; x\oslash y\mapsto\gamma (\left( x\oslash 1\right)
    \obslash \left( y\oslash 1\right) )$.

    If we write \eqref{eq:oslcosepcop} on elements, for all \ $x,x^{\prime },y,y^{\prime }\in B$, we get%
    \begin{equation*}
    \left( x\oslash x^{\prime }\right) _{1}\gamma (\left( x\oslash x^{\prime
    }\right) _{2}\obslash \left( y\oslash y^{\prime }\right) )=\gamma (\left(
    x\oslash x^{\prime }\right) \obslash \left( y\oslash y^{\prime }\right)
    _{1})\left( y\oslash y^{\prime }\right) _{2}
    \end{equation*}%
    i.e.%
    \begin{equation*}
    \left( x_{1}\oslash x_{2}^{\prime }\right) \gamma (\left( x_{2}\oslash
    x_{1}^{\prime }\right) \obslash \left( y\oslash y^{\prime }\right) )=\gamma
    (\left( x\oslash x^{\prime }\right) \obslash \left( y_{1}\oslash
    y_{2}^{\prime }\right) )\left( y_{2}\oslash y_{1}^{\prime }\right) .
    \end{equation*}%
    If we let $x^{\prime }=y^{\prime }=1$ in the previous formula, then
    we obtain
    \begin{equation*}
    x_{1}\oslash 1\gamma (\left( x_{2}\oslash 1\right) \obslash \left( y\oslash
    1\right) )=\gamma (\left( x\oslash 1\right) \obslash \left( y_{1}\oslash
    1\right) )y_{2}\oslash 1
    \end{equation*} that is $x_{1}f(x_{2}\oslash y)\oslash 1 = f(x\oslash
    y_{1})y_{2}\oslash 1$  i.e.\ \eqref{form:integosl3}. Moreover $f\left(
    1\oslash 1\right) = \gamma (\left( 1\oslash 1\right) \obslash \left( 1\oslash
    1\right) )=1$.
\end{proof}

\subsection{Integrals for one-sided Hopf algebras}\label{ssec:intnHopf}

We describe integrals in $\pred{B}$ for $B$ a right Hopf algebra with anti-multiplicative and anti-comultiplicative
right antipode.

\begin{proposition}\label{pro:intgRHopf}
Given a right Hopf algebra $B$ with anti-multiplicative and anti-comultiplicative
right antipode $S^r$, the space of left integrals in $\pred{B}$ is isomorphic to the space of   linear maps $\tau:B\rightarrow \Bbbk $ such that
\begin{equation}\label{intrHopf}
x_{1}\tau\left( x_{2}S^{r}\left(
y\right) \right) =\tau\left( xS^{r}\left( y_{1}\right) \right) y_{2}.
\end{equation}
\end{proposition}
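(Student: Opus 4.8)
The plan is to transport the characterisation of left integrals given in \cref{pro:cosepiB} through the bijection $i_B$. Since $B$ is a right Hopf algebra with anti-multiplicative and anti-comultiplicative right antipode $S^{r}$, \cref{prop:Frobenius} guarantees that $i_B\colon B\to B\oslash B$, $b\mapsto b\oslash 1$, is invertible with $i_B^{-1}(x\oslash y) = xS^{r}(y)$ for all $x,y\in B$. Precomposition with $i_B$ therefore yields a linear isomorphism
\[
\pred{B} = (B\oslash B)^* \xrightarrow{\sim} B^*, \qquad f \mapsto \tau_f \coloneqq f\circ i_B,
\]
whose inverse sends $\tau\in B^*$ to the functional $f_\tau \coloneqq \tau\circ i_B^{-1}$; the latter is well defined on the quotient $B\oslash B$ precisely because $i_B^{-1}$ is, and it is given by $f_\tau(x\oslash y) = \tau\big(xS^{r}(y)\big)$.

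First I would note that, $i_B$ being in particular injective, \cref{pro:cosepiB} applies: $f\in\pred{B}$ is a left integral if and only if the identity $x_1 f(x_2\oslash y) = f(x\oslash y_1)\,y_2$ of \eqref{eq:futuresep} holds in $B$ for all $x,y\in B$. I would then rewrite \eqref{eq:futuresep} for $f = f_\tau$: substituting $f_\tau(x\oslash y) = \tau(xS^{r}(y))$ turns its left-hand side into $x_1\tau\big(x_2 S^{r}(y)\big)$ and its right-hand side into $\tau\big(xS^{r}(y_1)\big)\,y_2$, whence \eqref{eq:futuresep} for $f_\tau$ is literally \eqref{intrHopf} for $\tau$. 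Thus $f\in\pred{B}$ is a left integral if and only if $\tau_f = f\circ i_B$ satisfies \eqref{intrHopf}, and since $f\mapsto\tau_f$ is a bijection it restricts to a linear isomorphism between $\int_l\pred{B}$ and the space of linear maps $\tau\colon B\to\Bbbk$ satisfying \eqref{intrHopf}, as claimed.

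There is no genuine obstacle here beyond careful bookkeeping: the whole argument is the translation of \cref{pro:cosepiB} along the explicit formula for $i_B^{-1}$ furnished by \cref{prop:Frobenius}. The only points deserving (routine) attention are that $f_\tau$ descends to $B\oslash B$, which is automatic, and that the substitution into the Sweedler notation of \eqref{eq:futuresep} is carried out using that the comultiplications occurring there are those of $B$ and not that of the coalgebra $B\oslash B$.
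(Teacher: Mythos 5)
Your argument is correct and follows essentially the same route as the paper: both use the bijectivity of $i_B$ from \cref{prop:Frobenius} together with the explicit formula $i_B^{-1}(x\oslash y)=xS^{r}(y)$ to transport the integral condition along $f\mapsto f\circ i_B$, the only cosmetic difference being that you invoke \cref{pro:cosepiB} explicitly to pass from the condition in $B\oslash B$ to the equality in $B$, whereas the paper applies $i_B^{-1}$ to both sides directly.
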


\begin{proof}
 As a consequence of \cref{prop:Frobenius}, given a right Hopf algebra $B$ with anti-multiplicative and anti-comultiplicative
right antipode, we  have the isomorphism $i_B^*=\mathfrak{M}\left( i_B,\Bbbk \right):\mathfrak{M}%
\left( B\oslash B,\Bbbk \right) \to \mathfrak{M}\left( B,\Bbbk \right),f\mapsto f\circ i_B.$
By definition, a left integral in $\pred{B}$ is a linear map $f:B\oslash B\rightarrow \Bbbk $
such that $x_{1}f\left( x_{2}\oslash y\right) \oslash 1=f\left( x\oslash
y_{1}\right) y_{2}\oslash 1$, thus via $i_B^*$  it  corresponds to a linear map $\tau_{f}\coloneqq f\circ
i_{B}:B\rightarrow \Bbbk $ such that \eqref{intrHopf} is true, knowing that $i_{B}^{-1}\left( x\oslash y\right)
=xS^{r}\left( y\right) $.
\begin{invisible}
Given, $f,$ by applying $i_{B}^{-1}$ on both sides of $x_{1}f\left(
x_{2}\oslash y\right) \oslash 1=f\left( x\oslash y_{1}\right) y_{2}\oslash 1,
$ we get $x_{1}f\left( x_{2}\oslash y\right) =f\left( x\oslash y_{1}\right)
y_{2}$ so that
\begin{eqnarray*}
x_{1}t_{f}\left( x_{2}S^{r}\left( y\right) \right)  &=&x_{1}fi_{B}\left(
x_{2}S^{r}\left( y\right) \right) =x_{1}fi_{B}i_{B}^{-1}\left( x_{2}\oslash
y\right) =x_{1}f\left( x_{2}\oslash y\right)  \\
&=&f\left( x\oslash y_{1}\right) y_{2}=fi_{B}i_{B}^{-1}\left( x\oslash
y_{1}\right) y_{2}=t_{f}\left( xS^{r}\left( y_{1}\right) \right) y_{2}.
\end{eqnarray*}

Conversely, given $t:B\rightarrow \Bbbk ,$ define $f_{t}\coloneqq t\circ i_{B}^{-1}.$
Then%
\begin{equation*}
x_{1}f_{t}\left( x_{2}\oslash y\right) =x_{1}t\left( x_{2}S^{r}\left(
y\right) \right) =t\left( xS^{r}\left( y_{1}\right) \right)
y_{2}=f_{t}\left( x\oslash y_{1}\right) y_{2}.
\end{equation*}
\end{invisible}
\end{proof}

\begin{remark}\label{rmk:intgRHopf}
Given a right Hopf algebra $B$ with anti-multiplicative and anti-comultiplicative right antipode, we know that the canonical map $i_B$ is bijective by \cref{prop:Frobenius}, and so we can consider the linear map $\omega \colon \int_l \pred{B} \to \int_lB^*,f\mapsto f\circ i_B,$ from \cref{rem:oldandnew}, which becomes injective and allows us to identify $\int_l\pred{B}$ with the space of integrals in $B^*$ that satisfy \eqref{intrHopf}.
In general, this map is not evidently surjective, but it is so in case the right antipode is indeed an antipode, as stated in the next result.

\end{remark}

\begin{corollary}\label{cor:intHopf}
For a Hopf algebra $B$ the map 
$\omega \colon \int_l\pred{B}\to\int_lB^*,f\mapsto f\circ i_B,$ is invertible.
\end{corollary}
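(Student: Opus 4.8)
The plan is to leverage what has already been established. Since $B$ is a Hopf algebra with antipode $S$, \cref{prop:Frobenius} gives (with $S^{r}=S$) that $i_{B}$ is bijective with $i_{B}^{-1}(x\oslash y)=xS(y)$ for all $x,y\in B$, and by \cref{rmk:intgRHopf} the map $\omega$ is injective; hence it only remains to prove surjectivity. Given a classical left integral $\tau\in\int_{l}B^{*}$, I would take
\[
f\coloneqq\tau\circ i_{B}^{-1}\in(B\oslash B)^{*},\qquad\text{that is,}\qquad f(x\oslash y)=\tau(xS(y)),
\]
so that $\omega(f)=f\circ i_{B}=\tau$; the assertion then reduces to showing that $f\in\int_{l}\pred{B}$. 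As $i_{B}$ is injective, \cref{pro:cosepiB} translates this into the identity
\[
\sum x_{1}\,\tau(x_{2}S(y))=\sum\tau(xS(y_{1}))\,y_{2}\qquad\text{for all }x,y\in B,
\]
which must be deduced from the left-integral condition $\sum h_{1}\tau(h_{2})=\tau(h)1_{B}$.

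The core of the argument is this last identity. The key move is to evaluate the left-integral condition at $h=xS(y)$: since $S$ is anti-comultiplicative, $\Delta(xS(y))=\sum x_{1}S(y_{2})\otimes x_{2}S(y_{1})$, whence
\[
\sum x_{1}\,S(y_{2})\,\tau(x_{2}S(y_{1}))=\tau(xS(y))\,1_{B}.
\]
I would then apply this relation with $y$ replaced by $y_{1}$, multiply on the right by $y_{2}$, and contract the resulting adjacent factor by means of the standard identity $\sum y_{1}\otimes S(y_{2})y_{3}=y\otimes 1_{B}$ (with $\sum y_{1}\otimes y_{2}\otimes y_{3}$ the iterated coproduct): the left-hand side collapses to $\sum x_{1}\tau(x_{2}S(y))$ and the right-hand side to $\sum\tau(xS(y_{1}))y_{2}$, which is exactly the required identity. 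The only delicate point should be keeping the iterated-coproduct bookkeeping consistent through this contraction; everything else is formal. As a sanity check, specialising $y=1$ reduces the identity to $\sum x_{1}\tau(x_{2})=\tau(x)1_{B}$, the defining property of a left integral.

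With the identity in hand, $f$ belongs to $\int_{l}\pred{B}$ and $\omega(f)=\tau$, so $\omega$ is surjective, hence bijective. Conceptually, this simply records that the coalgebra isomorphism $i_{B}\colon B\to B\oslash B$ carries the space $\int_{l}\pred{B}=GR(\pred{B})$ of integrals in the new sense onto the classical space $\int_{l}B^{*}$.
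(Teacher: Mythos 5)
Your proof is correct and follows essentially the same route as the paper: both reduce the claim to the bijectivity of $i_B$ and to the identity $x_1\tau(x_2S(y))=\tau(xS(y_1))y_2$ for classical left integrals $\tau$. The only difference is that the paper cites this last equivalence as well known (Lemma 5.1.4 of D\u{a}sc\u{a}lescu--N\u{a}st\u{a}sescu--Raianu), whereas you verify the needed direction explicitly via $\Delta(xS(y))=\sum x_1S(y_2)\otimes x_2S(y_1)$ and the contraction $\sum y_1\otimes S(y_2)y_3=y\otimes 1$ — a computation that checks out.
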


\begin{proof}
As already mentioned, $i_B$ is invertible for $B$ a Hopf algebra.
\cref{pro:intgRHopf} shows that $i_B^*$ gives an isomorphism between $\int_l\pred{B}$ and the space of  linear maps $\tau:B\rightarrow \Bbbk $ such that \eqref{intrHopf} is true. It is well-known that since $B$ is a Hopf algebra, this is equivalent to ask that $\tau\in \int_lB^*$, cf. \cite[Lemma 5.1.4]{DNC}.
\end{proof}

\subsection{The cocommutative case}\label{ssec:cocomm}

Here we study integrals for cocommutative bialgebras. We start with a useful lemma. 

\begin{lemma}
\label{lem:intBoslBHopf}
Let $B$ be a bialgebra and let $f:B\to H$ be a bialgebra map into a Hopf algebra $H$ with $\int_{l} H^{\ast}=\Bbbk \tau$, with $\tau$ possibly zero. If $\widehat{f}:B\oslash B\to H,x\oslash y\mapsto f(x)Sf(y),$ from \eqref{eq:fhat}, is bijective, then $\int_{l} \pred{B}=\int_{l}\left( B\oslash B\right)
^{\ast }=\Bbbk \lambda $ for $\lambda\coloneqq \tau \circ \widehat{f}.$
\end{lemma}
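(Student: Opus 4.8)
The plan is to transport the description of left integrals through the bijection $\widehat{f}$. Recall from \cref{rem:LRints} that $\int_l\pred{B} = \int_l(B\oslash B)^*$ for any bialgebra $B$, so it suffices to identify $\int_l(B\oslash B)^*$ with $\Bbbk\lambda$. The key observation is that $\widehat{f}\colon B\oslash B\to H$ is a coalgebra map by \eqref{eq:fhat}, hence its linear dual $\widehat{f}^{\,*}\colon H^*\to (B\oslash B)^*$ is an algebra map (in fact one should check it is compatible with the convolution products involved, which follows since $\widehat{f}$ is a coalgebra morphism), and since $\widehat{f}$ is bijective, $\widehat{f}^{\,*}$ is an algebra isomorphism. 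Moreover $\widehat{f}$ sends $1\oslash 1 = i_B(1)\oslash\cdots$ — more precisely $\widehat{f}(1\oslash 1) = f(1)Sf(1) = 1_H$ — so $\widehat{f}^{\,*}$ intertwines the augmentations $\varepsilon_{H^*}\colon H^*\to\Bbbk$, $\mu\mapsto\mu(1_H)$ and $\varepsilon_{(B\oslash B)^*}$, $g\mapsto g(1\oslash 1)$.

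First I would make precise what ``left integral in $(B\oslash B)^*$'' means in the sense relevant here: by \cref{rem:LRints}, every $f\in\int_l\pred{B}$ is a left integral in the augmented algebra $(B\oslash B)^*$, i.e.\ $g\conv f = g(1\oslash 1)f$ for all $g$; and conversely \cref{rem:LRints} asserts the reverse inclusion $\int_l(B\oslash B)^*\subseteq\int_l\pred{B}$ is actually an equality, so the two notions coincide. Thus I only need: (a) under the algebra isomorphism $\widehat{f}^{\,*}$, the space of left integrals of the augmented algebra $H^*$ corresponds bijectively to that of the augmented algebra $(B\oslash B)^*$; and (b) the left integrals of the augmented algebra $H^*$ are exactly $\int_l H^* = \Bbbk\tau$. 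Point (b) is the classical fact recalled in \cref{ssec:classical} (left integrals on $H$ are precisely the $\mu\in H^*$ with $\nu*\mu = \nu(1)\mu$ for all $\nu$), together with the hypothesis $\int_l H^* = \Bbbk\tau$. Point (a) is formal: if $\Phi\colon A'\to A$ is an isomorphism of augmented algebras, then $a'\mapsto\Phi(a')$ restricts to a bijection between $\{a'\mid b'a' = \varepsilon(b')a'\ \forall b'\}$ and $\{a\mid ba = \varepsilon(b)a\ \forall b\}$, because $\Phi$ is multiplicative and respects augmentations and is surjective. Hence $\int_l(B\oslash B)^* = \widehat{f}^{\,*}\!\left(\int_l H^*\right) = \widehat{f}^{\,*}(\Bbbk\tau) = \Bbbk(\tau\circ\widehat{f}) = \Bbbk\lambda$, which is what we want. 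Combining with $\int_l\pred{B} = \int_l(B\oslash B)^*$ gives the full statement.

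The main point to verify carefully — the only place where something could go wrong — is that $\widehat{f}^{\,*}$ really is an isomorphism of \emph{augmented} algebras in the correct sense, i.e.\ that the convolution algebra structure on $(B\oslash B)^*$ used in \cref{def:oslash} and \cref{rem:LRints} is genuinely the dual of the coalgebra $B\oslash B$ of \cref{rmk:BotBact}, which is exactly the content of \cref{lem:oslash} (identifying $\pred{B}$ with ${}^*(B\oslash B)$ as algebras), and that $\widehat{f}$ is a morphism of coalgebras so that dualizing turns it into an algebra map. Since $B\oslash B$ has counit $x\oslash y\mapsto\varepsilon(x)\varepsilon(y)$ and $H$ has counit $\varepsilon_H$, and $\widehat{f}$ intertwines them, the dual map intertwines the augmentations $g\mapsto g(1\oslash 1)$ and $\mu\mapsto\mu(1_H)$ — here one uses $\widehat{f}(1\oslash 1)=1_H$ and that the augmentation of the dual algebra is evaluation at the group-like-type element $1$. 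Everything else is bookkeeping. I would also note in passing that $\tau$ being possibly zero causes no trouble: if $\tau=0$ then $\lambda=0$ and $\int_l\pred{B}=0$, consistent with the statement $\Bbbk\lambda = 0$.
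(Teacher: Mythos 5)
Your argument breaks down at its very first step: you assert that \cref{rem:LRints} gives the equality $\int_l\pred{B} = \int_l(B\oslash B)^*$ for any bialgebra, and later that it ``asserts the reverse inclusion is actually an equality''. It does not. That remark only proves the inclusion $\int_l\pred{B}\subseteq\int_l(B\oslash B)^*$ (obtained by setting $y=1$ in \eqref{form:integosl2}); the reverse inclusion is not known in general --- the paper even explicitly raises, as an open question, whether there is a bialgebra for which the two spaces differ. Membership in $\int_l\pred{B}$ is the condition $x_1f(x_2\oslash y)\oslash 1 = f(x\oslash y_1)y_2\oslash 1$ for \emph{all} $x,y\in B$, which is on its face strictly stronger than the augmented-algebra condition $g\conv f = g(1\oslash 1)f$ for all $g$.

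Consequently the substantive half of the lemma is missing from your proof. Your transport argument is fine as far as it goes: $\widehat{f}$ is a coalgebra isomorphism, so $\widehat{f}^{\,*}$ is an isomorphism of augmented algebras and carries $\int_l H^* = \Bbbk\tau$ onto $\int_l(B\oslash B)^* = \Bbbk\lambda$; this is exactly how the paper obtains the same intermediate statement, and combined with \cref{rem:LRints} it gives $\int_l\pred{B}\subseteq\Bbbk\lambda$. But to conclude equality one must still verify that $\lambda$ itself satisfies \eqref{form:integosl3}. The paper does this using the classical identity $h_1\tau(h_2S(k)) = \tau(hS(k_1))k_2$ for integrals on a Hopf algebra (\cite[Lemma 5.1.4]{DNC}), applied with $h=f(a)$ and $k=f(b)$: it yields $f(a_1)\lambda(a_2\oslash b) = \lambda(a\oslash b_1)f(b_2)$, hence $a_1\lambda(a_2\oslash b) - \lambda(a\oslash b_1)b_2\in\ker(f)=\ker(\widehat{f}\circ i_B)=\ker(i_B)$ by injectivity of $\widehat{f}$, which is precisely \eqref{form:integosl3}. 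This is where the bijectivity hypothesis on $\widehat{f}$ and the Hopf-algebra-specific strengthening of the integral condition actually do work; neither appears in your argument.
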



\begin{proof}
We showed in \cite[Remark 2.7]{ArMeSa} that $\widehat{f}$ is a coalgebra map such that $\widehat{f}\circ i_{B}=f $.
The bijection $\widehat{f} \colon B\oslash B \rightarrow H$ yields a bijection $\Bbbk \tau=\int_{l}H^{\ast
} \overset{\widehat{f }^{\ast }}{\longrightarrow }\int_{l}\left( B\oslash B\right) ^{\ast } $ and hence $\int_{l}\left( B\oslash B\right)
^{\ast }=\Bbbk \widehat{f }^{\ast }\left( \tau\right) =\Bbbk \tau\widehat{f }%
=\Bbbk \lambda .$
By \cref{rem:LRints}, we get  $\int_{l} \pred{B}
\subseteq \int_{l}\left( B\oslash B\right) ^{\ast }=\Bbbk \lambda $.
To have the equality it remains to check that $\lambda\in \int_{l} \pred{B}$.
Let $a,b\in B$. By \cite[Lemma 5.1.4]{DNC}, we know that $f (a_1)\tau(f(a_2)Sf(b))=\tau(f(a)Sf(b_1))f(b_2)$ i.e. $f(a_1)\lambda(a_2\oslash b)=\lambda(a\oslash b_1)f(b_2)$ and hence $a_1\lambda(a_2\oslash b)-\lambda(a\oslash b_1)b_2\in\ker(f)=\ker(\widehat{f}\circ i_B)=\ker(i_B)$.
This means that $a_{1}\lambda \left( a_{2}\oslash b\right) \oslash 1=\lambda
\left( a\oslash b_{1}\right) b_{2}\oslash 1$  and hence $%
\lambda \in \int_{l}\pred {B}$.
\end{proof}

Then we show that, in the cocommutative case, given a total integral, we can completely describe the whole space of integrals.

\begin{proposition}\label{lem:tot}
Let $B$ be a cocommutative bialgebra. If there is a total integral $\lambda\in \int_l \pred{B}$, then $\int_l\pred{B}=\Bbbk\lambda$. Moreover $\lambda$ is the unique total integral.
\end{proposition}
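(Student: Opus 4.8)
The statement has two parts: first, that $\int_l\pred{B} = \Bbbk\lambda$ whenever a total integral $\lambda$ exists; second, that $\lambda$ is the unique total integral. The plan is to exploit the fact, recorded in \cref{rem:LRints}, that every left integral $f \in \int_l\pred{B}$ is in particular a usual left integral in the augmented algebra $(B\oslash B)^*$, i.e.\ $g * f = g(1\oslash 1)f$ for all $g \in (B\oslash B)^*$. So the first move is to produce, from cocommutativity of $B$, a companion \emph{right} integral property for $\lambda$, and then use the classical argument (left integral times right integral is rigid) to pin down the dimension.

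\textbf{Step 1: $\lambda$ is also a right integral.} Since $B$ is cocommutative, $B \oslash B$ is cocommutative as a coalgebra: from \eqref{def:oslashcoalg} we have $\Delta(x \oslash y) = (x_1 \oslash y_2) \otimes (x_2 \oslash y_1)$, and cocommutativity of $B$ makes this symmetric. Hence the flip $\sigma \colon B \oslash B \to B\oslash B$, $x \oslash y \mapsto y \oslash x$, is a well-defined coalgebra automorphism (it is well-defined because $\tau(\Delta(b)) = \Delta(b)$ for $b \in B^+$). I would first check that $\sigma$ transforms a left integral into a right integral of the dual algebra $(B\oslash B)^*$: more precisely, the convolution product on $(B\oslash B)^*$ satisfies $(f \conv g) \circ \sigma = (g\circ\sigma) \conv (f \circ \sigma)$ up to the cocommutativity bookkeeping, so $\lambda \circ \sigma$ is a right integral. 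Then I would argue that $\lambda$ \emph{itself} is simultaneously a left and a right integral — either by showing $\lambda \circ \sigma = \lambda$ directly (using \eqref{form:integosl2} with a well-chosen substitution and cocommutativity), or, more robustly, by the standard lemma that in a finite-dimensional-augmented situation left and right integral spaces coincide when one is nonzero. Here the cleanest route is: a nonzero left integral $\lambda$ satisfies $\lambda(1\oslash 1) = 1 \ne 0$ (it's total), hence $\varepsilon_{(B\oslash B)^*}(\lambda) \ne 0$, and then for any \emph{right} integral $f'$ we get $f' = f'(1\oslash1)\lambda = \lambda(1\oslash1)f' \cdot(\text{constant})$, forcing $\lambda$ to span the right integrals too.

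\textbf{Step 2: one-dimensionality.} With $\lambda$ a total left integral that is also a right integral, take any $f \in \int_l\pred{B}$. By \cref{rem:LRints}, $f$ is a left integral of $(B\oslash B)^*$, so $\lambda \conv f = \lambda(1\oslash1)f = f$. On the other hand, since $\lambda$ is a right integral, $\lambda \conv f = f(1\oslash 1)\lambda$. Comparing, $f = f(1\oslash1)\lambda$, so $f \in \Bbbk\lambda$; combined with $\int_l\pred{B} \subseteq \int_l(B\oslash B)^* = \Bbbk\lambda$ (from \cref{rem:LRints}) this gives $\int_l\pred{B} = \Bbbk\lambda$. For the uniqueness of the total integral: any total integral $\mu$ lies in $\Bbbk\lambda$, say $\mu = c\lambda$; evaluating at $1 \oslash 1$ gives $1 = \mu(1\oslash1) = c\,\lambda(1\oslash1) = c$, so $\mu = \lambda$.

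\textbf{Main obstacle.} The delicate point is Step 1: establishing that $\lambda$ is a \emph{two-sided} integral in $(B\oslash B)^*$, since \cref{def:oslash} only encodes the left condition \eqref{form:integosl3}, and \cref{rem:LRints} only extracts a left integral of $(B\oslash B)^*$. The role of cocommutativity is precisely to make $B\oslash B$ cocommutative and thereby symmetrize left and right; I expect the honest work is verifying that the flip $\sigma$ interacts correctly with the convolution product and with condition \eqref{form:integosl2}, and confirming $\lambda\circ\sigma$ is again an element of $\int_l\pred{B}$ (not merely of $\int_l(B\oslash B)^*$) so that the spanning argument closes. Everything after that is the textbook ``left integral $\conv$ right integral'' computation.
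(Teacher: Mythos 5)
Your proof is correct and follows essentially the same route as the paper: the identity $f \conv g = g(1\oslash 1)f$ for integrals (your \cref{rem:LRints}) plus commutativity of the convolution on $(B\oslash B)^*$ coming from cocommutativity of $B$, then comparing $\lambda\conv f$ computed both ways. Your Step 1 is an unnecessary detour, though — since cocommutativity of $B$ makes $B\oslash B$ a cocommutative coalgebra, the convolution algebra is commutative and left and right integrals coincide for free, with no need for the flip map $\sigma$ or the two-sidedness lemma.
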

\begin{proof}
If we take $y=1$ in \eqref{form:integosl1}, we get $f \conv g=fg(1\oslash 1)$ for every integral $f\in \int_l \pred{B}$ and any $g\in\pred{B}\equiv \mathfrak{M}(B\oslash B,\Bbbk)$. Thus, given $\lambda'\in \int_l\pred{B}$, we get $\lambda'=\lambda'\lambda(1\oslash 1)=\lambda' \conv \lambda=\lambda \conv \lambda'=\lambda\lambda'(1\oslash 1)$ where the second last equality holds by cocommutativity of $B$. Therefore $\int_l\pred{B}\leq\Bbbk\lambda$, whence the equality holds. If $\lambda'$ is total, the above equality tells that $\lambda'=\lambda\lambda'(1\oslash 1)=\lambda$.
\end{proof}

Finally, we show that whenever $i_B$ is surjective, the left integrals for a cocommutative bialgebra $B$ are completely determined by the integrals for its Hopf envelope $B \oslash B$.

\begin{proposition}
\label{prop:intgccom-iBsu}
 Let $B$ be a cocommutative bialgebra with $i_B$ surjective. Then left integrals in $\pred{B}$ coincide with usual left integrals in the augmented algebra $(B\oslash B)^*$.
\end{proposition}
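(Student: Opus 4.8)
The plan is to realise $B\oslash B$ itself as a Hopf algebra and then to translate the defining condition \eqref{form:integosl3} of integrals in $\pred{B}$ into the classical integral identity for that Hopf algebra. By \cref{rem:LRints} we already have $\int_l\pred{B}\subseteq\int_l(B\oslash B)^*$, so only the reverse inclusion needs to be proved. The key step — and the one where the hypotheses are genuinely used — is that, since $i_B$ is surjective and $B$ is cocommutative, $B\oslash B$ carries a Hopf algebra structure for which $i_B\colon B\to B\oslash B$ is a bialgebra map (this is the $\oslash$-realisation of the Hopf envelope $\mathrm{H}(B)$; see \cite{ArMeSa}, cf.\ \cref{lem:iBsu}). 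Note that $B\oslash B$ is then cocommutative, being a quotient coalgebra of the cocommutative coalgebra $B\otimes B^{\cop}=B\otimes B$ (see \cref{rmk:BotBact}); denote its antipode by $\sigma$.

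Granting this, I would first observe that applying \eqref{eq:fhat} to the bialgebra map $i_B\colon B\to B\oslash B$ produces a coalgebra map $\widehat{i_B}\colon B\oslash B\to B\oslash B$, $x\oslash y\mapsto i_B(x)\,\sigma(i_B(y))$, with $\widehat{i_B}\circ i_B=i_B$. Since $i_B$ is surjective, it is an epimorphism, whence $\widehat{i_B}=\mathrm{Id}_{B\oslash B}$; in particular $x\oslash y=i_B(x)\,\sigma(i_B(y))$ for all $x,y\in B$. Now fix $\lambda\in\int_l(B\oslash B)^*$ and $x,y\in B$, and put $c\coloneqq i_B(x)$, $d\coloneqq i_B(y)$. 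Using the identity just proved together with the fact that $i_B$ is a coalgebra map, one rewrites the two sides of \eqref{form:integosl3} in $B\oslash B$ as
\[
\textstyle\sum x_1\,\lambda(x_2\oslash y)\oslash 1=\sum c_1\,\lambda\bigl(c_2\,\sigma(d)\bigr),\qquad \sum \lambda(x\oslash y_1)\,y_2\oslash 1=\sum d_2\,\lambda\bigl(c\,\sigma(d_1)\bigr),
\]
and the two right-hand sides coincide by the standard identity for left integrals on a Hopf algebra (\cite[Lemma 5.1.4]{DNC}, applied to $\lambda$ and $B\oslash B$). Hence $\lambda$ satisfies \eqref{form:integosl3}, that is $\lambda\in\int_l\pred{B}$, which gives the remaining inclusion. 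Alternatively, one may package this by invoking \cref{lem:intBoslBHopf} with the bialgebra map $f\coloneqq i_B\colon B\to B\oslash B$: here $\widehat{f}=\widehat{i_B}$ is bijective and $\int_l(B\oslash B)^*$ is at most one-dimensional, being the integral space of a Hopf algebra (cf.\ \cite{Sullivan}), so the lemma at once yields $\int_l\pred{B}=\int_l(B\oslash B)^*$ together with the one-dimensionality of both.

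I expect the only real obstacle to be the first step: that surjectivity of $i_B$ together with cocommutativity of $B$ upgrades the coalgebra $B\oslash B$ to a Hopf algebra (equivalently, to the Hopf envelope of $B$), making $i_B$ a bialgebra map. Everything after that is a bookkeeping translation — via the identification $x\oslash y=i_B(x)\sigma(i_B(y))$ — between condition \eqref{form:integosl3}, which defines integrals in $\pred{B}$, and the classical integral identity on the Hopf algebra $B\oslash B$; no property of $B$ is used beyond what is needed to produce that Hopf algebra.
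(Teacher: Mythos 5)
Your proposal is correct and follows essentially the same route as the paper: both rest on the fact (from \cite{ArMeSa}) that, under the stated hypotheses, $B\oslash B$ is the Hopf envelope of $B$ with $i_B$ a bialgebra map and $\widehat{\imath_B}=\mathrm{Id}$, and then conclude via \cref{lem:intBoslBHopf}. Your explicit computation with $x\oslash y=i_B(x)\,\sigma(i_B(y))$ is simply an unpacking of that lemma in this special case, and your remark that $\int_l(B\oslash B)^*$ is at most one-dimensional by \cite{Sullivan} correctly supplies the hypothesis of the lemma that the paper leaves implicit.
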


\begin{proof}
In the stated hypotheses, it follows from \cite[Proposition 3.12]{ArMeSa} that the coalgebra $B \oslash B$ carries a Hopf algebra structure with antipode $x \oslash y \mapsto y \oslash x$ and that the canonical map $i_B \colon B \to B \oslash B$ is a bialgebra map. In fact, $B \oslash B \cong \mathrm{H}(B)$, the Hopf envelope of $B$. Since, in this case, $\widehat{\imath_B}$ from \eqref{eq:fhat} is the identity map, we conclude by \cref{lem:intBoslBHopf}.
\end{proof}

\begin{invisible}
    Personal: if $M$ is not left adjustable, then the $i_B$ of $B = \Bbbk M$ is not surjective. Maybe in this case the Hopf envelope is not $B \oslash B$.
\end{invisible}

\subsection{The commutative case}\label{ssec:commcase}


Recall from \cite[Theorem 3.16]{ArMeSa} that $B \oslash B \cong \mathrm{H}(B)$ via $\widehat{\eta_B}$ when $B$ is a commutative bialgebra. As a consequence, we can deduce that the left integrals for $B$ are the left integrals for its Hopf envelope $\mathrm{H}(B)$.

\begin{proposition}
\label{prop:intcomm}
Let $B$ be a commutative bialgebra. Then, left integrals in $\pred{B}$ coincide with usual left integrals on the Hopf algebra algebra $B \oslash B$, i.e., $\int_{l} \pred{B}=\int_{l}\left( B\oslash B\right) ^{\ast }$.
\end{proposition}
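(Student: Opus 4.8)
The plan is to reduce everything to the identification $B\oslash B\cong\mathrm{H}(B)$ from \cite[Theorem 3.16]{ArMeSa}, which for commutative $B$ realises $B\oslash B$ as the Hopf envelope via the bijective coalgebra map $\widehat{\eta_B}\colon B\oslash B\to\mathrm{H}(B)$, where $\eta_B\colon B\to\mathrm{H}(B)$ is the universal bialgebra map into a Hopf algebra and $\widehat{\eta_B}\circ i_B=\eta_B$. The inclusion $\int_l\pred{B}\subseteq\int_l(B\oslash B)^*$ already holds for every bialgebra by \cref{rem:LRints}, so the whole content is the reverse inclusion: every usual left integral $\Lambda$ of the augmented algebra $(B\oslash B)^*$ satisfies \eqref{form:integosl3} and is thus a left integral in $\pred{B}$ in the sense of \cref{def:oslash}. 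The strategy is to run, for each such $\Lambda$ separately, exactly the computation from the proof of \cref{lem:intBoslBHopf}; the one-dimensionality of $\int_l\mathrm{H}(B)^*$ assumed there is not needed for this part.

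Concretely, I would fix $\Lambda\in\int_l(B\oslash B)^*$ and transport it along $\widehat{\eta_B}$, setting $\tau\coloneqq\Lambda\circ\widehat{\eta_B}^{-1}\in\mathrm{H}(B)^*$. Since $\widehat{\eta_B}$ is a counital coalgebra isomorphism with $\widehat{\eta_B}(1\oslash 1)=\eta_B(1)S\eta_B(1)=1_{\mathrm{H}(B)}$, its transpose is an isomorphism of augmented convolution algebras, so $\tau$ is a left integral on $\mathrm{H}(B)$. Then, applying \cite[Lemma 5.1.4]{DNC} to the bialgebra map $\eta_B$ and the integral $\tau$ (as in the proof of \cref{lem:intBoslBHopf}) gives, for all $a,b\in B$,
\[
\eta_B(a_1)\,\tau\bigl(\eta_B(a_2)S\eta_B(b)\bigr)=\tau\bigl(\eta_B(a)S\eta_B(b_1)\bigr)\,\eta_B(b_2).
\]
Using $\widehat{\eta_B}(x\oslash y)=\eta_B(x)S\eta_B(y)$ and $\tau\circ\widehat{\eta_B}=\Lambda$, this rewrites as $\eta_B\bigl(a_1\Lambda(a_2\oslash b)-\Lambda(a\oslash b_1)b_2\bigr)=0$. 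Since $\widehat{\eta_B}$ is injective and $\eta_B=\widehat{\eta_B}\circ i_B$, we have $\ker\eta_B=\ker i_B$, so $a_1\Lambda(a_2\oslash b)-\Lambda(a\oslash b_1)b_2\in\ker i_B$; applying $i_B$ yields $a_1\Lambda(a_2\oslash b)\oslash 1=\Lambda(a\oslash b_1)b_2\oslash 1$ for all $a,b\in B$, which is \eqref{form:integosl3}. Hence $\Lambda\in\int_l\pred{B}$, and together with \cref{rem:LRints} we conclude $\int_l\pred{B}=\int_l(B\oslash B)^*$.

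I do not expect a real obstacle: the argument is a routine reprise of \cref{lem:intBoslBHopf}, the only care needed being to check that transporting a left integral along the coalgebra isomorphism $\widehat{\eta_B}$ again yields a left integral (which uses only that $\widehat{\eta_B}$ is counital and sends $1\oslash 1$ to $1$) and to invoke \cite[Lemma 5.1.4]{DNC} in its ``along a bialgebra map'' form, exactly as in that lemma. Alternatively, and more briefly, one may note that uniqueness of integrals \cite{Sullivan} gives $\dim_\Bbbk\int_l\mathrm{H}(B)^*\le 1$, hence $\int_l\mathrm{H}(B)^*=\Bbbk\tau$ with $\tau$ possibly zero, so that \cref{lem:intBoslBHopf} applies verbatim with $H=\mathrm{H}(B)$ and $f=\eta_B$ (using \cite[Theorem 3.16]{ArMeSa} for the bijectivity of $\widehat{\eta_B}$), yielding $\int_l\pred{B}=\int_l(B\oslash B)^*$ at once.
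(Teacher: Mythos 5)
Your proposal is correct and follows essentially the same route as the paper: the paper also reduces the statement to \cref{lem:intBoslBHopf} via the identification of $B \oslash B$ with the Hopf envelope (taking $H = B \oslash B$ itself with $f = i_B$, so that $\widehat{i_B} = \mathrm{id}$, which is the same as your second, briefer variant up to the isomorphism $\widehat{\eta_B}$). Your first variant merely unfolds the proof of that lemma so as to avoid invoking one-dimensionality of $\int_l \mathrm{H}(B)^*$, which is a harmless stylistic difference.
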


\begin{proof}
    As for the cocommutative case above, $B \oslash B$ carries a Hopf algebra structure with antipode $x \oslash y \mapsto y \oslash x$ (see also \cite[Lemma 2.5]{ArMeSa}) and the canonical map $i_B \colon B \to B \oslash B$ is a bialgebra map. We conclude again by \cref{lem:intBoslBHopf}, as before.
\end{proof}

\begin{corollary}
\label{cor:commtotal}
    Let $B$ be a commutative bialgebra. If there is a total integral $\lambda \in \int_l \pred{B}$, then $\int_l \pred{B} = \Bbbk \lambda$. Moreover, $\lambda$ is the unique total integral.
\end{corollary}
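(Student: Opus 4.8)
The plan is to derive the statement from \cref{prop:intcomm}. Since $B$ is commutative, that proposition identifies $\int_{l}\pred{B}$ with the space $\int_{l}\left(B\oslash B\right)^{\ast}$ of usual left integrals on the commutative Hopf algebra $B\oslash B\cong\mathrm{H}(B)$. By the uniqueness of integrals for Hopf algebras---precisely the input that sits behind \cref{lem:intBoslBHopf}, see \cite{Sullivan}---the space $\int_{l}\left(B\oslash B\right)^{\ast}$ is at most one-dimensional, and hence so is $\int_{l}\pred{B}$ through the isomorphism of \cref{prop:intcomm}. It then remains only to exhibit a generator and to settle the uniqueness of the total integral.

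Now suppose $\lambda\in\int_{l}\pred{B}$ is a total integral, so that $\lambda(1\oslash 1)=1$. In particular $\lambda\neq 0$, and since $\dim_{\Bbbk}\int_{l}\pred{B}\leq 1$ this immediately gives $\int_{l}\pred{B}=\Bbbk\lambda$. For the last assertion, let $\lambda'\in\int_{l}\pred{B}$ be another total integral and write $\lambda'=c\lambda$ for some $c\in\Bbbk$; evaluating at $1\oslash 1$ yields $1=\lambda'(1\oslash 1)=c\,\lambda(1\oslash 1)=c$, whence $\lambda'=\lambda$. This last step is verbatim the closing argument in the proof of \cref{lem:tot}.

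I do not anticipate a genuine obstacle here, since the real content is carried by \cref{prop:intcomm}; the point worth underlining is rather \emph{why} one is forced to route through it. One cannot simply replay the proof of \cref{lem:tot}: that argument exploited the commutativity of the convolution algebra $\pred{B}$ (equivalently, the cocommutativity of $B\oslash B$), a property a merely commutative bialgebra $B$ need not enjoy---indeed such a $B$ need not even embed into a Hopf algebra, so \cref{cor:totiBinj} is unavailable as well. The identification $B\oslash B\cong\mathrm{H}(B)$ of \cite[Theorem 3.16]{ArMeSa} circumvents this by supplying an honest antipode on $B\oslash B$, after which the classical uniqueness theorem for integrals on Hopf algebras finishes the job.
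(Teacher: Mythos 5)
Your proof is correct and follows essentially the same route as the paper's: identify $\int_l\pred{B}$ with $\int_l(B\oslash B)^*$ via \cref{prop:intcomm}, invoke Sullivan's uniqueness theorem for integrals on the Hopf algebra $B\oslash B\cong\mathrm{H}(B)$ to get that the space is (at most) one-dimensional, conclude $\int_l\pred{B}=\Bbbk\lambda$ from $\lambda\neq 0$, and pin down the total integral by evaluating at $1\oslash 1$. Your added remark on why the argument of \cref{lem:tot} cannot simply be replayed in the merely commutative case is accurate but not needed for the proof itself.
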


\begin{proof}
    Suppose that $\lambda \in \int_l \pred{B}$ is a total integral. In particular, it is non-zero. Since $B$ is commutative, $\int_l \pred{B} = \int_l (B \oslash B)^*$ and we know, by \cite{Sullivan}, that $\dim_\Bbbk\left(\int_l (B \oslash B)^*\right) = 1$. Whence, $\int_l \pred{B} = \int_l (B \oslash B)^* = \Bbbk \lambda$. Finally, if also $\lambda'$ is a total integral, then $\lambda' = k \lambda$ for $k \in \Bbbk$ and $1 = \lambda'(1 \oslash 1) = k\lambda(1 \oslash 1) = k$ entail that $\lambda' = \lambda$.
\end{proof}

\subsection{The finite-dimensional case}\label{ssec:findim}

In the finite-dimensional case as well, the integrals for a bialgebra are the integrals for its Hopf envelope and hence the space $\int (\pred{B})$ is one-dimensional, exactly as in the presence of an antipode.
To prove this claim, recall from
\cite[Proposition 3.10]{ArMeSa} that for any left Artinian bialgebra $B$ one has $\mathrm{H}(B) = B/\ker(i_B) \cong B\oslash B$.





\begin{proposition}
\label{coro:iBcopsu}
Let $B$ be a finite-dimensional or, more generally, a left Artinian bialgebra. Then
$\int_{l} \pred{B}=\int_{l}\left( B\oslash B\right)
^{\ast }=\Bbbk \lambda $ for $\lambda\coloneqq \tau \circ \widehat{q_B}$, where $\tau$ is a non-zero integral in $\mathrm{H}(B)$.
\end{proposition}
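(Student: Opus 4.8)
The plan is to deduce the statement from \cref{lem:intBoslBHopf}, applied to the canonical bialgebra map $q_B \colon B \to \mathrm{H}(B)$ into the Hopf envelope of $B$, once $\mathrm{H}(B)$ has been recognised as a finite-dimensional Hopf algebra.

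First I would recall from \cite[Proposition 3.10]{ArMeSa} that, for a left Artinian (in particular, finite-dimensional) bialgebra $B$, one has $\mathrm{H}(B) = B/\ker(i_B)$ and that the coalgebra map $\widehat{q_B} \colon B \oslash B \to \mathrm{H}(B)$ of \eqref{eq:fhat}, which is characterised by $\widehat{q_B} \circ i_B = q_B$, is an isomorphism. Hence $f \coloneqq q_B$ is a bialgebra map from $B$ into a Hopf algebra $H \coloneqq \mathrm{H}(B)$ whose associated map $\widehat{f} = \widehat{q_B}$ is bijective, so that \cref{lem:intBoslBHopf} will give $\int_l \pred{B} = \int_l (B \oslash B)^* = \Bbbk(\tau \circ \widehat{q_B})$ as soon as we know $\int_l \mathrm{H}(B)^* = \Bbbk\tau$ for some non-zero $\tau$.

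It therefore remains to see that $\mathrm{H}(B)$ is a finite-dimensional Hopf algebra, for then the existence-and-uniqueness theorem for integrals on finite-dimensional Hopf algebras, due to Larson and Sweedler \cite{LarsonSweedler}, provides a non-zero $\tau$ spanning $\int_l \mathrm{H}(B)^*$. If $B$ is finite-dimensional this is clear, since $\mathrm{H}(B) = B/\ker(i_B)$ is a quotient of $B$. If $B$ is only left Artinian, then $\mathrm{H}(B)$, being a quotient of $B$, is itself a left Artinian Hopf algebra, and at this point I would invoke the fact that a left Artinian Hopf algebra over a field is necessarily finite-dimensional.

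Assembling the pieces, \cref{lem:intBoslBHopf} with $f = q_B$ and $H = \mathrm{H}(B)$ yields $\int_l \pred{B} = \int_l (B \oslash B)^* = \Bbbk\lambda$ with $\lambda \coloneqq \tau \circ \widehat{q_B}$; since $\tau \neq 0$ and $\widehat{q_B}$ is bijective, $\lambda \neq 0$, so this space is exactly one-dimensional, as asserted. The only point that is not a purely formal combination of \cite[Proposition 3.10]{ArMeSa}, \cref{lem:intBoslBHopf} and Larson--Sweedler is precisely this last reduction in the left Artinian case, i.e.\ the assertion that a left Artinian Hopf algebra is finite-dimensional; in the strictly finite-dimensional case there is no obstacle whatsoever.
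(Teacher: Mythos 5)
Your proposal is correct and follows essentially the same route as the paper: recognise $B \oslash B \cong \mathrm{H}(B)$ via $\widehat{q_B}$ using \cite[Proposition 3.10]{ArMeSa}, observe that $\mathrm{H}(B)$ is finite-dimensional, invoke Larson--Sweedler to get $\int_l \mathrm{H}(B)^* = \Bbbk\tau$, and conclude by \cref{lem:intBoslBHopf}. The only cosmetic difference is that for the left Artinian case the paper cites a remark in \cite{ArMeSa} for the finite-dimensionality of $\mathrm{H}(B)$, whereas you invoke the general theorem that a left Artinian Hopf algebra is finite-dimensional; both fill the same (and only) non-formal gap.
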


\begin{proof}
Since $B$ is either finite-dimensional or left Artinian, $\mathrm{H}(B)$ is finite-dimensional (see the comment below \cite[Proposition 3.9]{ArMeSa}). As a consequence$\int_l\mathrm{H}(B)^*=\Bbbk \tau$ for some non-zero integral $\tau$. By \cref{lem:intBoslBHopf}, we get that $\int_{l} \pred{B}=\int_{l}\left( B\oslash B\right)
^{\ast }=\Bbbk \lambda $ for $\lambda\coloneqq \tau \circ \widehat{q_B}.$
\end{proof}

Note that also in this case we have an analogue of \cref{cor:commtotal}.

\section{Examples}\label{sec:examples}

We are going to test our new notion of integral on four examples. The first three   are bialgebras $B$ with $i_B$ injective but not surjective, while the fourth has $i_B$ surjective but not injective.

\subsection{Polynomial ring with group-like variable} \label{ssec:poly}

Let $\Bbbk \left[ X\right]$ be the polynomial ring regarded as a bialgebra via $\Delta \left( X\right) = X\otimes X,\varepsilon \left( X\right) =1.$

\begin{proposition}\label{prop:intkX}
   For $B\coloneqq \Bbbk[X]$, we have $\int_{l} \pred{B}
=\Bbbk \lambda $ where $\lambda$ is the total integral
\[\lambda \colon B\oslash B\rightarrow \Bbbk , \qquad X^{m}\oslash X^{n}\mapsto
\delta _{m,n}.\]
In particular, $B$ is coseparable in $\mathfrak{M}_{B}$.
\end{proposition}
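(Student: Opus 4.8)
The plan is to compute $B\oslash B$ explicitly, recognise it as the group Hopf algebra $\Bbbk[\Z]$, and then obtain the integral from \cref{lem:intBoslBHopf}. Since $\varepsilon(X^n)=1$ for every $n$, the augmentation ideal is $B^+=(X-1)\Bbbk[X]$, spanned by the elements $X^n-1$ with $n\ge 1$. As $X$ is group-like, the diagonal right $B$-action gives $(X^a\otimes X^b)\cdot(X^n-1)=X^{a+n}\otimes X^{b+n}-X^a\otimes X^b$, so in $B\oslash B$ one has $X^{a+n}\oslash X^{b+n}=X^a\oslash X^b$ for all $a,b\ge 0$ and $n\ge 1$; consequently $B\oslash B$ is spanned by $\{X^n\oslash 1 : n\ge 0\}\cup\{1\oslash X^n : n\ge 1\}$. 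I would then take $H\coloneqq\Bbbk[t,t^{-1}]$, the group algebra of $\Z$ (a Hopf algebra with $S(t)=t^{-1}$), and the bialgebra map $f\colon B\to H$, $X\mapsto t$. By \eqref{eq:fhat} this induces the coalgebra map $\widehat f\colon B\oslash B\to H$, $X^a\oslash X^b\mapsto f(X^a)Sf(X^b)=t^{a-b}$, which is surjective (its image contains every $t^m$) and which sends the above spanning set bijectively onto the basis $\{t^m : m\in\Z\}$ of $H$; hence $\widehat f$ is bijective and $B\oslash B\cong\Bbbk[\Z]$.

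Next I would determine $\int_l H^*$. A left integral $\tau\in H^*$ satisfies $t^m\tau(t^m)=\tau(t^m)1$ for each $m$ (evaluate the defining relation $\mu*\tau=\mu(1)\tau$ on the group-like $t^m$ and use linear independence of distinct powers of $t$), which forces $\tau(t^m)=0$ for $m\neq 0$; conversely the functional $t^m\mapsto\delta_{m,0}$ is readily seen to be a left integral, so $\int_l H^*=\Bbbk\tau$ with $\tau(t^m)=\delta_{m,0}$. Applying \cref{lem:intBoslBHopf} to $f$ and this $\tau$ then yields $\int_l\pred{B}=\int_l(B\oslash B)^*=\Bbbk\lambda$ with $\lambda\coloneqq\tau\circ\widehat f$, and $\lambda(X^m\oslash X^n)=\tau(t^{m-n})=\delta_{m,n}$, which is exactly the asserted formula. (Since $B$ is commutative one may also reach the identification $\int_l\pred{B}=\int_l(B\oslash B)^*$ through \cref{prop:intcomm}.)

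Finally, $\lambda(1\oslash 1)=\delta_{0,0}=1$, so $\lambda$ is a total integral, and $i_B$ is injective because $B=\Bbbk[X]$ embeds into the Hopf algebra $\Bbbk[t,t^{-1}]$ (apply \cref{lem:iB-inj}; equivalently, $\widehat f$ is bijective and $\widehat f\circ i_B=f$ is injective). Therefore \cref{cor:totiBinj} applies and $B$ is coseparable as a coalgebra in $\mathfrak{M}_B$. The only genuinely computational point is the description of $(B\otimes B)B^+$, hence of $B\oslash B$, in the first step — once $B\oslash B\cong\Bbbk[\Z]$ is in place, everything else is a direct invocation of \eqref{eq:fhat}, \cref{lem:intBoslBHopf} and \cref{cor:totiBinj}, so I do not expect a real obstacle here.
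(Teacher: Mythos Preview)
Your proof is correct and follows essentially the same approach as the paper's: identify $B\oslash B$ with the group Hopf algebra $\Bbbk[\Z]\cong\Bbbk[X,X^{-1}]$ via $\widehat{f}$ (the paper's $\widehat{\eta_B}$), transport the known integral $\tau$ back along this isomorphism, and conclude coseparability from \cref{cor:totiBinj} once $i_B$ is injective by \cref{lem:iB-inj}. The only cosmetic differences are that you verify the bijectivity of $\widehat{f}$ and the description of $\int_l\Bbbk[\Z]^*$ by hand, whereas the paper invokes \cref{prop:intcomm} and the identification $B\oslash B\cong\mathrm{H}(B)$ for commutative bialgebras from \cite{ArMeSa}, together with a reference for the integral on Laurent polynomials.
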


\begin{proof}
\begin{invisible}
    \ps{[changed on 7/7/25]}
\end{invisible}
Since the bialgebra $B=\Bbbk [X] $ is commutative, \cref{prop:intcomm} entails that 
$\int_{l} \pred{B} = \int_{l}\left( B\oslash B\right)^{\ast }$. Since $B \cong \Bbbk\mathbb{N}$, we have that $\mathrm{H}(B) \cong \Bbbk\mathbb{Z} \cong \Bbbk[X,X^{-1}]$, the Hopf algebra of Laurent polynomials, where $\eta_B:\Bbbk[X]\to \Bbbk[X,X^{-1}]$ is the canonical injection, and we have the isomorphism
\[\widehat{\eta_B} \colon B \oslash B \to \Bbbk[X,X^{-1}], \qquad X^m \oslash X^n \mapsto X^{m-n}.\]
Since $\int_{l}\Bbbk[X,X^{-1}]^* = \Bbbk \tau$ where $\tau(X^n) = \delta_{n,0}$ (see e.g.~\cite[Example 5.1.5 and Theorem 5.4.2]{DNC}), we have that $\int_{l} \pred{B}
=\Bbbk \lambda $ for $\lambda = \tau \circ \widehat{\eta_B}$, that is to say,
\[\lambda(X^m \oslash X^n) = \tau(X^{m-n}) = \delta_{m-n,0} = \delta_{m,n}\]
for all $m,n \in \mathbb{N}$, as claimed. It is clearly a total integral. Hence, since $i_B$ is injective because $B$ embeds into the Hopf algebra $\Bbbk [ X,X^{-1}]$, we conclude by \cref{cor:totiBinj}.
\end{proof}

\begin{remark}
\label{rem:iKXnotepi}
As we observed above, $i_B$ is injective for $B=\Bbbk[X]$. 
  However, it is not surjective because $X$ is a group-like which is not invertible in $B$. The latter also entails that $B$ is not a one-sided Hopf algebra.
\end{remark}

\subsection{Coordinate bialgebra of \texorpdfstring{$n \times n$}{nxn} matrices}
\begin{invisible}[\ps{Modified 05/06/25}]\end{invisible}

Let $\mathrm{M}(n)$ be the bialgebra whose underlying algebra is the algebra of polynomials $\Bbbk \left[x_{ij}\mid i,j=1,\ldots,n\right]$ over a field $\Bbbk$ of characteristic $0$, with comultiplication and counit uniquely determined by
\begin{gather*}
\Delta(x_{ij}) = \sum_{k = 1}^n x_{ik} \otimes x_{kj} \qquad \text{and} \qquad \varepsilon(x_{ij}) = \delta_{ij}
\end{gather*}
for all $i,j \in \{1,\ldots,n\}$ (also denoted by $\mathcal{O}(M_n(\Bbbk))$ - see \cite[Example 1.3.8]{Montgomery}). This is the bialgebra universally representing matrix multiplication, in the sense that $\mathrm{Alg}_\Bbbk(\mathrm{M}(n),A) \cong M_n(A)$ for every $\Bbbk$-algebra $A$ (see, for instance, \cite[\S I.4]{Kassel} for the case $n=2$ or \cite[Example 2.3.10(4)]{Pareigis} for the general case). 
For the sake of avoiding overlapping with \S\ref{ssec:poly}, we suppose that $n\geq 2$.

Let $X \coloneqq \begin{pmatrix} x_{ij} \end{pmatrix}$ be the $n \times n$ matrix with $ij^{\mathrm{th}}$ entry $x_{ij}$ and consider the Hopf algebra $\mathrm{GL}\left( n\right) :=\mathrm{M}(n)\left[ t\right] /\left\langle t\det X -1\right\rangle $ (also denoted by $\mathcal{O}(GL_n(\Bbbk))$ - see, e.g., \cite[Example 1.5.7]{Montgomery}).

\begin{proposition}
For $\mathrm{char}(\Bbbk)=0$ and $B \coloneqq \mathrm{M}(n)$ one has $B\oslash B\cong \mathrm{GL}\left( n\right)=\mathrm{H}\left( B\right).$ As a
consequence $\int_{l} \pred{B} = \Bbbk\lambda $ where $\lambda$ is a total integral and $B$ is coseparable as a coalgebra in $\mathfrak{M}_B$.
\end{proposition}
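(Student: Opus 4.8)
The plan is to compute $B \oslash B$ explicitly, recognise it as $\mathrm{GL}(n)$, and then transport the known integral on $\mathrm{GL}(n)$ back to $\pred{B}$ through the commutative-case machinery. First I would identify $B \oslash B$: writing $B \otimes B \cong \Bbbk[x_{ij}, y_{ij}]$ with $x_{ij} \leftrightarrow x_{ij} \otimes 1$ and $y_{ij} \leftrightarrow 1 \otimes x_{ij}$, the augmentation ideal is $B^+ = \langle x_{ij} - \delta_{ij} \mid i,j \rangle$ and $\Delta(x_{ij} - \delta_{ij}) = \sum_k x_{ik} \otimes x_{kj} - \delta_{ij}(1 \otimes 1)$ is the $(i,j)$-entry of $XY - I$, where $X = (x_{ij})$ and $Y = (y_{ij})$. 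Since $B$, hence $B \otimes B$, is commutative, $(B \otimes B) \cdot B^+ = (B \otimes B)\Delta(B^+)$ is precisely the ideal generated by the entries of $XY - I$, so $B \oslash B \cong \Bbbk[x_{ij}, y_{ij}]/\langle XY - I\rangle$. In this quotient $\det X \cdot \det Y = \det(XY) = 1$, so $\det X$ is a unit and $Y = X^{-1} = (\det X)^{-1}\,\mathrm{adj}(X)$; conversely, sending $y_{ij}$ to the $(i,j)$-entry of $(\det X)^{-1}\,\mathrm{adj}(X)$ defines a homomorphism onto the localisation $\Bbbk[x_{ij}][(\det X)^{-1}] = \mathrm{GL}(n)$, and the two homomorphisms are mutually inverse. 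Checking on the generators $x_{ij} \oslash 1 \mapsto x_{ij}$ that this algebra isomorphism intertwines the comultiplication $\Delta(x_{ij} \oslash 1) = \sum_k (x_{ik} \oslash 1) \otimes (x_{kj} \oslash 1)$ and the counit $\varepsilon(x_{ij} \oslash 1) = \delta_{ij}$ of \cref{rmk:BotBact} with the standard ones of $\mathcal{O}(GL_n(\Bbbk))$, I get that it is a bialgebra, hence a Hopf algebra, isomorphism. Combined with $B \oslash B \cong \mathrm{H}(B)$ in the commutative case (recalled just before \cref{prop:intcomm}), this gives $B \oslash B \cong \mathrm{GL}(n) = \mathrm{H}(B)$.

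For the space of integrals, \cref{prop:intcomm} gives $\int_l \pred{B} = \int_l (B \oslash B)^* = \int_l \mathrm{GL}(n)^*$. Since $\mathrm{char}(\Bbbk) = 0$, the group $GL_n$ is linearly reductive --- every rational representation being completely reducible --- so $\mathrm{GL}(n) = \mathcal{O}(GL_n(\Bbbk))$ is a cosemisimple Hopf algebra, and by the dual Maschke theorem it admits a total left integral $\tau$. Putting $\lambda \coloneqq \tau \circ \widehat{\eta_B}$, with $\widehat{\eta_B}$ as in \eqref{eq:fhat}, \cref{lem:intBoslBHopf} (applied as in the proof of \cref{prop:intcomm}) shows $\lambda \in \int_l \pred{B}$, and then \cref{cor:commtotal} yields $\int_l \pred{B} = \Bbbk\lambda$. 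Finally, the canonical inclusion $\eta_B \colon \mathrm{M}(n) \hookrightarrow \mathrm{GL}(n)$ embeds $B$ into a Hopf algebra, so $i_B$ is injective by \cref{lem:iB-inj}; since a total integral exists, \cref{cor:totiBinj} shows that $B$ is coseparable as a coalgebra in $\mathfrak{M}_B$.

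The main obstacle is the input that $\mathrm{GL}(n)$ is cosemisimple in characteristic zero (equivalently, that $GL_n$ is linearly reductive): this is exactly where the hypothesis $\mathrm{char}(\Bbbk) = 0$ enters, and it fails for $n \ge 2$ in positive characteristic, where $\mathcal{O}(GL_n(\Bbbk))$ has no total integral. The rest --- the ideal-theoretic computation of $B \oslash B$ and the compatibility with the coalgebra structure of \cref{rmk:BotBact} --- is routine, the only point that needs care being that $\Delta(B^+)$ generates precisely the ideal $\langle XY - I\rangle$ and that the induced structure on $\Bbbk[x_{ij}][(\det X)^{-1}]$ is the standard Hopf structure of $\mathcal{O}(GL_n(\Bbbk))$.
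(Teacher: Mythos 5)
Your proof is correct and reaches the conclusion along the same overall skeleton as the paper (commutative-case identification $B \oslash B \cong \mathrm{H}(B)$, a total integral on $\mathrm{GL}(n)$ transported via \cref{lem:intBoslBHopf}, injectivity of $i_B$ from the embedding into $\mathrm{GL}(n)$, and coseparability via \cref{cor:totiBinj}), but two sub-steps are handled genuinely differently. First, you identify $B \oslash B$ with $\mathrm{GL}(n)$ by a direct ideal computation, $B \oslash B \cong \Bbbk[x_{ij},y_{ij}]/\langle XY - I\rangle \cong \Bbbk[x_{ij}][(\det X)^{-1}]$; the paper instead cites the known description of $\mathrm{H}(\mathrm{M}(n))$ as the localisation at the group-like $\det X$ together with a general result on Hopf envelopes of commutative bialgebras. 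Your route is more self-contained and makes the isomorphism $\widehat{\eta_B}$ completely explicit; when checking that it is a coalgebra map you should verify it on \emph{both} families of algebra generators, $x_{ij}\oslash 1 \mapsto x_{ij}$ and $1 \oslash x_{ij} \mapsto (X^{-1})_{ij}=S(x_{ij})$, where the flip in $\Delta(x\oslash y)=(x_1\oslash y_2)\otimes(x_2\oslash y_1)$ matches $S$ being an anti-coalgebra map — but this is routine. Second, you obtain the total integral $\tau$ from linear reductivity of $GL_n$ in characteristic zero plus the classical dual Maschke theorem, whereas the paper cites Sullivan's theorem that $\mathrm{GL}(n)^*$ has a total integral if and only if $\mathrm{char}(\Bbbk)=0$; both are valid, and both correctly locate where the characteristic-zero hypothesis enters. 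Your appeal to \cref{cor:commtotal} for one-dimensionality is a slight detour — \cref{lem:intBoslBHopf} already delivers $\int_l\pred{B}=\int_l(B\oslash B)^*=\Bbbk\lambda$ once you know $\int_l\mathrm{GL}(n)^*=\Bbbk\tau$ — but it is harmless.
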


\begin{proof}
Since $B$ is commutative, 
$B \oslash B \cong \mathrm{H}(B)$ via $\widehat{\eta_B}$.
 It is well-known (see e.g.\ \cite[Example 1.5]{FarinatiGaston}) that $\mathrm{GL}(n)=\mathrm{H}(\mathrm{M}(n))$: $\det(X)$ is a group-like element in $\mathrm{M}(n)$ and $\mathrm{M}(n)[\det(X)^{-1}] \cong \mathrm{GL}(n)$ is a Hopf algebra, whence we conclude by \cite[Theorem 65]{Takeuchi} and \cite[Proposition 3.14]{ArMeSa}.
 From \cite[Theorem 5]{Sullivan} we know that there is a total integral $\tau\in \mathrm{GL}(n)^*$ if and only if $\mathrm{char}(\Bbbk)=0$.
 \begin{invisible}
     Sanity check: by Dual Maschke Theorem \cite[Theorem 2.4.6]{Montgomery}, the existence of a total integral is equivalent to cosemisimplicity and in \cite[page 26]{Montgomery} it is observed that $\mathrm{GL}(n)$ is cosemisimple. See, e.g., \cite[\S3.4]{Abe} for the case in positive characteristic.
 \end{invisible}
 Then $\int_{l} \mathrm{GL}(n)^* = \Bbbk \tau$ (see \cite[Exercise 10.1.7]{Radford-book}) and, in view of \cref{lem:intBoslBHopf}, we get that  $\int_{l} \pred{B}=\int_{l}\left( B\oslash B\right)
^{\ast }=\Bbbk \lambda $ for $\lambda\coloneqq \tau \circ \widehat{\eta_B}$. Since $\lambda(1\oslash 1)=\tau(1)=1$, we get that $\lambda$ is a total integral.
Since we noticed that $B$ embeds into the Hopf algebra $\mathrm{GL}(n)$, from \cref{lem:iB-inj} we deduce that $i_B$ is injective and hence
$B$ is coseparable as a coalgebra in $\mathfrak{M}_B$ by \cref{cor:totiBinj}.
%
\begin{invisible}
In view of the computations performed by Paolo one can guess an explicit and surprisingly simple expression for  the  total integral $\lambda\in \mathrm{GL}(2)^*$ that is \begin{equation*}
\lambda \left( x_{11}^{m}x_{12}^{n}x_{21}^{p}x_{22}^{q}t^{k}\right) =\delta
_{k,m+n}\delta _{m,q}\delta _{n,p}\left( -1\right) ^{n}\frac{m!n!}{(m+n+1)!}.%
\end{equation*}
A first inspection shows that we can first define it on $\mathrm{M}(2)[t]$ and then see that it kills the ideal $\langle t\det-1\rangle$ so that the linear map $\lambda\in\mathrm{GL}(2)^*$ is well-defined. Unfortunately the checking that it is really a total integral seems not so trivial (not finished yet). For this reason we can decide not to include the explicit expression, which on the one hand adds nothing to the discussion and on the other hand could be well-known.]
\end{invisible}
\end{proof}




\begin{remark}
In line with what we observed for $\Bbbk[X]$ in \cref{rem:iKXnotepi}, also $B=\mathrm{M}(n)$  is not a right Hopf algebra as otherwise we would have
    \[1=\varepsilon(x_{11}) = (x_{11})_1S^r((x_{11})_2)= \sum_{j=0}^n x_{1j}S^r(x_{j1})\in \langle x_{1j} \mid j=1,\ldots,n\rangle,\]
    a contradiction. Similarly it is not a left Hopf algebra.
    Since we have noticed that the map $i_B$ is injective, we are again in the setting of \cref{rem:iKXnotepi} so that $i_B$ is not surjective.
\end{remark}

\subsection{Quantum plane}

Fix $q\in\Bbbk\setminus\{0\}$ and consider from \cite[Section IV.1]{Kassel} the so-called \emph{quantum plane}
i.e. the bialgebra $\Bbbk _{q}\left[ x,y\right] $ generated by $x,y$ with
relation $yx=qxy.$ Its bialgebra structure, given explicitly in \cite[page 118]{Kassel}, is uniquely determined by
\begin{equation}\label{eq:coalgquantumplane}
\begin{gathered}
\Delta \left( x\right)  =x\otimes x,\qquad \Delta \left( y\right)
=x\otimes y+y\otimes 1 \\
\varepsilon \left( x\right)  =1,\qquad \varepsilon \left( y\right) =0.
\end{gathered}
\end{equation}
The quantum binomial identity tells us that%
\begin{equation}\label{form:deltaq}
\Delta \left( x^{m}y^{n}\right) =\sum_{i=0}^{n}\binom{n}{i}%
_{q}x^{m+i}y^{n-i}\otimes x^{m}y^{i}.
\end{equation}

\begin{invisible}
Since $\left( y\otimes 1\right) \left( x\otimes y\right) =yx\otimes
y=qxy\otimes y=q\left( x\otimes y\right) \left( y\otimes 1\right) ,$ the
quantum binomial identity tell us that $\Delta \left( y^{n}\right)
=\sum_{i=0}^{n}\binom{n}{i}_{q}\left( x\otimes y\right) ^{i}\left(
y\otimes 1\right) ^{n-i}=\sum_{i=0}^{n}\binom{n}{i}%
_{q}x^{i}y^{n-i}\otimes y^{i}$ and hence $\Delta \left( x^{m}y^{n}\right)
=\sum_{i=0}^{n}\binom{n}{i}_{q}x^{m+i}y^{n-i}\otimes x^{m}y^{i}.$
\end{invisible}

Note that this bialgebra is neither commutative nor cocommutative.


Consider also the Hopf algebra $\Bbbk_q[x,x^{-1},y]$ (see \cite[Section 2]{Chen}), whose bialgebra structure is uniquely determined by relations \eqref{eq:coalgquantumplane}, as well.

\begin{theorem}
\label{thm:intqplane}
For the bialgebra $B\coloneqq \Bbbk _{q}\left[ x,y\right] $, we have $\mathrm{H}(B) \cong \Bbbk_q[x,x^{-1},y] \cong B \oslash B$ and $\int_{l}\pred{B}\cong \int_{l}\mathrm{H}(B)^* = 0$.

\end{theorem}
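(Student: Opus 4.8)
The plan is to identify $B \oslash B$ with a Hopf algebra and then invoke \cref{lem:intBoslBHopf} to reduce the statement to showing that Hopf algebra has no nonzero integral. For the first step: $x$ is a group-like element of $B = \Bbbk_q[x,y]$, and since $\Bbbk_q[x,y]$ is a domain in which $x$ is normal, the Ore localization $B[x^{-1}] = \Bbbk_q[x,x^{-1},y]$ is defined and, by \cite[Section 2]{Chen}, is a Hopf algebra with $S(x) = x^{-1}$. Any bialgebra map from $B$ into a Hopf algebra sends $x$ to an invertible group-like and hence factors uniquely through the localization map $\eta_B \colon B \to B[x^{-1}]$, so $B[x^{-1}]$ satisfies the universal property of the Hopf envelope; exactly as in the treatment of $\mathrm{M}(n)$ above, \cite[Theorem 65]{Takeuchi} and \cite[Proposition 3.14]{ArMeSa} (which identify the Hopf envelope, and $B\oslash B$, with the localization of a bialgebra at a suitable group-like) then give $\mathrm{H}(B) \cong \Bbbk_q[x,x^{-1},y]$ and that $\widehat{\eta_B} \colon B \oslash B \to \mathrm{H}(B)$ is an isomorphism. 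Applying \cref{lem:intBoslBHopf} to $\eta_B$ yields $\int_l \pred{B} = \int_l(B \oslash B)^* = \Bbbk\lambda$ with $\lambda = \tau \circ \widehat{\eta_B}$ and $\int_l\mathrm{H}(B)^* = \Bbbk\tau$ (the latter at most one-dimensional by uniqueness of integrals), so everything comes down to proving $\int_l\mathrm{H}(B)^* = 0$.

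For that, I would let $\tau$ be a left integral on $H := \Bbbk_q[x,x^{-1},y]$, so $\sum h_1\tau(h_2) = \tau(h)\,1_H$ for all $h$, and work in the monomial basis $\{x^a y^b : a \in \Z,\, b \in \N\}$ of $H$. The quantum-binomial expansion \eqref{form:deltaq} holds verbatim in $H$ for every $m \in \Z$ (since $\Delta(x^{-1}) = x^{-1}\otimes x^{-1}$), so evaluating the integral identity at $h = x^m y^n$ and comparing coefficients in this basis I expect to extract two facts: the component of $y$-degree zero yields $(x^{m+n} - 1)\,\tau(x^m y^n) = 0$, whence $\tau(x^m y^n) = 0$ whenever $m + n \neq 0$; and the components of positive $y$-degree yield $\binom{n}{i}_q\,\tau(x^m y^i) = 0$ for $0 \le i \le n - 1$. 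Taking $i = 0$, where $\binom{n}{0}_q = 1$, gives $\tau(x^m) = 0$ for all $m$, so $\tau$ is supported on $\{x^{-n}y^n : n \ge 1\}$. Finally, for a fixed $n \ge 1$ and any $N > n$, the second fact applied to $h = x^{-n}y^{N}$ at index $i = n$ gives $\binom{N}{n}_q\,\tau(x^{-n}y^n) = 0$; once we know some such $N$ makes $\binom{N}{n}_q$ nonzero in $\Bbbk$, we obtain $\tau(x^{-n}y^n) = 0$, hence $\tau = 0$, $\lambda = 0$, and $\int_l\pred{B} = 0$.

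The step I expect to require the most care — the main obstacle — is precisely the non-vanishing of some $\binom{N}{n}_q$ in $\Bbbk$: the base field is arbitrary and $q$ may be a root of unity (or $q = 1$ with $\mathrm{char}\,\Bbbk$ dividing small integers), so a single $q$-binomial can vanish. I would settle this uniformly with the generating-series identity $\sum_{k \ge 0}\binom{n+k}{n}_q z^k = \prod_{i=0}^{n}(1 - q^i z)^{-1}$, which is a formal consequence of the $q$-Pascal recurrence and so holds over any commutative ring: were $\binom{N}{n}_q$ to vanish in $\Bbbk$ for every $N > n$, the left-hand series would collapse to the constant $1$, forcing $\prod_{i=0}^{n}(1 - q^i z) = 1$ in $\Bbbk[z]$ — impossible, since that product is a polynomial of degree $n + 1 \ge 2$ with leading coefficient $(-1)^{n+1}q^{n(n+1)/2} \neq 0$. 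Apart from this point and the structural identification of $B \oslash B$ quoted above, the remainder is routine bookkeeping in the monomial basis.
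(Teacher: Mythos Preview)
There is a genuine gap in your identification $B\oslash B \cong \mathrm{H}(B)$. You write that ``exactly as in the treatment of $\mathrm{M}(n)$ above, \cite[Theorem~65]{Takeuchi} and \cite[Proposition~3.14]{ArMeSa} \ldots\ then give \ldots\ that $\widehat{\eta_B}\colon B\oslash B \to \mathrm{H}(B)$ is an isomorphism''. But if you look back at the $\mathrm{M}(n)$ proof, the isomorphism $B\oslash B \cong \mathrm{H}(B)$ is obtained in the \emph{first line} from commutativity (via \cite[Theorem~3.16]{ArMeSa}); the citations of Takeuchi and \cite[Proposition~3.14]{ArMeSa} are only used afterward to identify $\mathrm{H}(\mathrm{M}(n))$ with $\mathrm{GL}(n)$. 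The quantum plane is neither commutative nor cocommutative, so none of the available structural results (\cite[3.10, 3.12, 3.16]{ArMeSa}) applies. This is precisely why the paper does the work by hand: it defines a linear map $\gamma\colon \Bbbk_q[x,x^{-1},y]\to B\oslash B$ on the basis $\{y^m x^n\}$, checks $\widehat{\eta_B}\circ\gamma = \id$, and then uses the relations $ax\oslash by = -ay\oslash b$ and $a\oslash by^n = (-1)^n q^{n(n+1)/2} ay^n\oslash bx^n$ in $B\oslash B$ to show that $\gamma$ is surjective. Without something like this explicit computation, the bijectivity of $\widehat{\eta_B}$ is unproved, and \cref{lem:intBoslBHopf} cannot be invoked.

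Your treatment of the Hopf envelope itself (factoring through the Ore localization at the normal group-like $x$) is correct in spirit and close to the paper's, which packages the same idea via the universal property of Ore extensions. Your direct argument that $\int_l H^* = 0$ is correct and is a pleasant alternative to the paper's citation of \cite[Proposition~5.6.11(ii)]{DNC}: the paper's own elementary argument (suppressed in the final version) also reduces to showing that $\binom{n+k}{n}_q$ cannot vanish for all $k\geq 1$, but handles this by a telescoping $q$-Pascal computation rather than your generating-series trick $\sum_{k\geq 0}\binom{n+k}{n}_q z^k = \prod_{i=0}^n (1-q^iz)^{-1}$, which is arguably cleaner and works uniformly over any $\Bbbk$ and any $q\in\Bbbk^\times$.
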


\begin{proof}
Set $A \coloneqq \Bbbk[x]$ and $L \coloneqq \Bbbk[x,x^{-1}]$, and denote by $\jmath \colon A \to B$, by $\imath \colon L \to \Bbbk_q[x,x^{-1},y]$ and by $\eta_B \colon B\to \Bbbk_q[x,x^{-1},y]$ the canonical inclusions. Consider the algebra endomorphism $\varphi \colon L \to L$ uniquely determined by $\varphi(x) = qx$ (i.e., $\varphi(x^z) = q^{z}x^{z}$ for every $z \in \mathbb{Z}$) and its restriction to $A$, that we denote by $\varphi$ again. With these conventions, $\Bbbk _{q}\left[ x,y\right] = A[y,\varphi,0]$ and $\Bbbk _{q}\left[ x,x^{-1},y\right] = L[y,\varphi,0]$ are Ore extensions as in \cite[\S5.6]{DNC}. Namely, if we identify $L$ with the group algebra $\Bbbk \mathbb{Z}$, then in the notation of \cite[\S5.6]{DNC} we can take $c_1 = 1$, $c_1^*\colon \mathbb{Z} \to \Bbbk^\times, z \mapsto q^{z}$, and we find that $L[y,\varphi,0]$
is the $A_1$ of \cite{DNC} (the assumptions of algebraic closure and of characteristic zero on $\Bbbk$ do not play any role in this).

Now, suppose that $f \colon B \to H$ is a bialgebra morphism from $B$ to a Hopf algebra $H$. Since $x$ is group-like in $B$, $f(x)$ is invertible in $H$ and hence the composition $\xymatrix @C=20pt{A \ar[r]|-{\,\jmath\,} & B \ar[r]|-{\,f\,} & H}$ extends uniquely to an algebra morphism $\tilde f \colon L \to H$ mapping $x^{-1}$ to $Sf(x)$. Moreover, the element $f(y) \in H$ satisfies
\[f(y)\tilde f(x^n) = f(y)f\jmath(x^n) = f(yx^n) = f(q^nx^ny) = f\jmath\varphi(x^n)f(y) = \tilde f \varphi(x^n)f(y)\]
for every $n \in \mathbb{N}$ and so, by multiplying by $\tilde f(x^{-n}) $ from the right and by $\tilde f \varphi(x^{-n})$ from the left,
$f(y)\tilde f(x^{-n}) = \tilde f \varphi(x^{-n})f(y)$
for every $n \in \mathbb{N}$, too.
Thus, $f(y)\tilde f(a) = \tilde f \varphi (a) f(y)$ for every $a \in L$ and therefore, by the universal property of the Ore extension (see e.g.\ \cite[Lemma 5.6.4]{DNC}), there exists a unique algebra morphism $F \colon \Bbbk _{q}\left[ x,x^{-1},y\right] \to H$ extending $\tilde f$ and which is, in fact, a Hopf algebra morphism.
\begin{invisible}
Indeed, it satisfies
\[\Delta_H F(x) = \Delta_H f(x) = f(x) \otimes f(x) = (F \otimes F)\Delta(x) \qquad \varepsilon F(x) = \varepsilon f(x) = \varepsilon(x)\]
and
\[\Delta_H F(y) = \Delta_H f(y) = (f \otimes f)\Delta(y) = (F \otimes F)\Delta(y).\]
\end{invisible}%
Since any Hopf algebra morphism $\Bbbk _{q}\left[ x,x^{-1},y\right] \to H$ is, in particular, an algebra morphism, and so it is uniquely determined by the images of $x$ and $y$, it follows that $F$ is also the unique Hopf algebra morphism extending $f \colon B \to H$. Thus, $(\Bbbk _{q}\left[ x,x^{-1},y\right],\eta_B)$ satisfies the universal property of the Hopf envelope, as claimed.

Now, set $H \coloneqq \Bbbk_q[x,x^{-1},y]$ and consider the coalgebra map $\widehat{\eta_B} \coloneqq B\oslash B\to H$ induced by $\eta_B$. Define the linear map $\gamma \colon H\to B\oslash B$ on the basis $\{y^mx^n\mid m\in\mathbb{N},n\in\mathbb{Z}\}$ by setting $\gamma(y^mx^n)=y^mx^n\oslash 1$ if $n\geq 0$ and $\gamma(y^mx^n)=y^m\oslash x^{-n}$ otherwise. Then $\widehat{\eta_B}\circ\gamma=\id_H$ as on the basis we have $\widehat{\eta_B}\gamma(y^mx^n)=\widehat{\eta_B}(y^mx^n\oslash 1)=y^mx^n$ if $n\geq 0$ and $\widehat{\eta_B}\gamma(y^mx^n)=\widehat{\eta_B}(y^m\oslash x^{-n})=y^mS_H(x^{-n})=y^mx^n$ otherwise. Moreover, $q$-commutativity, the equality $ax\oslash bx=a\oslash b$ for $a,b\in B,$ and the equalities
\begin{gather}
0 =a\oslash b\varepsilon \left( y\right) =ay_{1}\oslash by_{2}=ax\oslash
by+ay\oslash b1 \qquad \Rightarrow \qquad ax\oslash by = -ay\oslash b, \label{eq:oslash1} \\
\text{and} \qquad a\oslash by =ax\oslash byx=qax\oslash bxy \stackrel{\eqref{eq:oslash1}}{=} -qay\oslash bx, \notag
\end{gather}%
from which one inductively gets
\[
a\oslash by^{n} = (-1)^{n}q^{\frac{n(n+1)}{2}}ay^{n}\oslash bx^{n},
\]%
\begin{invisible}
By induction:
\begin{eqnarray*}
ay^{n}\oslash bx^{n} & = & ay^{n-1}y\oslash bx^{n-1}x=-q^{-1}ay^{n-1}\oslash
bx^{n-1}y \\
& = & -q^{-1}q^{-(n-1)}ay^{n-1}\oslash byx^{n-1} \\
& = & -q^{-1}q^{-(n-1)}\left( -1\right) ^{n-1}q^{-\frac{\left( n-1\right) n}{2}} a\oslash byy^{n-1} \\
& = &(-1)^{n}q^{-n-\frac{\left( n-1\right) n}{2}}a\oslash by^{n} \\
& = &(-1)^{n}q^{-\frac{n(n+1)}{2}}a\oslash by^{n}.
\end{eqnarray*}
\end{invisible}%
imply that $B\oslash B$ is spanned by $\{y^mx^n\oslash 1,y^m\oslash x^n\mid m,n\in\mathbb{N}\}\subseteq \mathrm{im}(\gamma)$ so that the linearity of $\gamma$ entails that this map is surjective. Therefore $\gamma$ is bijective and hence $\widehat{\eta_B}$ is its inverse. Since the bialgebra map $\eta_B \colon B \to H$ induces the bijection $\widehat{\eta_B} \colon B \oslash B \to H$, we have that $\int_l \pred{B} = \int_l (B \oslash B)^* \cong \int_l H^*$, in view of \cref{lem:intBoslBHopf}.

To conlcude, the space $\int_{l} H^*$ of ordinary integrals for $H$ is always zero, as stated in \cite[Proposition 5.6.11(ii)]{DNC} and in view of the fact that $H = A_1$.
\begin{invisible}
Nevertheless, since we dropped the standing assumptions on the base field assumed in \cite[\S 5.6]{DNC} and for the convenience of the unaccustomed reader, let us provide a different and more elementary proof.
Let $f\in\int_{l} H^*$. By computing $h_1f(h_2)=1f(h)$ for a basis element $h=x^my^n$, $m \in \mathbb{Z}, n\in \mathbb{N}$, and by using \eqref{form:deltaq} (which is still valid for $H$, e.g.\ in view of \cite[(5.10)]{DNC} or by adapting \cite[Lemma 2.1]{Chen}), we get that
\[1f \left( x^{m}y^{n}\right) = \sum_{i=0}^{n}\binom{n}{i}_{q}x^{m+i}y^{n-i}f(x^{m}y^{i}) = \sum_{i=0}^{n-1}\binom{n}{i}_{q}x^{m+i}y^{n-i}f(x^{m}y^{i}) + x^{m+n}f(x^my^n),\]
from which we can deduce that $f(x^my^n)=0$ whenever $m+n \neq 0$, since in this case $x^{m+n}$ does not appear on the left-hand side.
Furthermore, for every $k \in \mathbb{N}\setminus\{0\}$ consider the relation
\begin{equation}\label{eq:intqplane}
0 = 1f(x^{-n}y^{n+k}) = \sum_{i=0}^{n+k}\binom{n+k}{i}_qx^{-n+i}y^{n+k-i}f(x^{-n}y^{i}) = \binom{n+k}{n}_qy^{k}f(x^{-n}y^{n})
\end{equation}
which holds for all $n \in \mathbb{N}$. 
If $f(x^{-n}y^n) \neq 0$ for some $n \in \mathbb{N}$, then necessarily $\binom{n+k}{n}_q = 0$ for every $k \geq 1$. In particular, when $q \neq 1$ and $k=1$, we get $0 = \binom{n+1}{n}_{q} = (n+1)_q = \frac{q^{n+1}-1}{q-1}$ and so $q^{n+1}=1$. Since the latter holds for $q=1$ as well, from now on we can suppose that $q^{n+1}=1$.
Now, the $q$-Pascal identity entails that
\begin{equation}\label{eq:Pmagic}
\binom{n+k+1}{n+1}_{q}=\binom{n+k}{n}_{q}+q^{n+1}\binom{n+k}{n+1}_{q}=\binom{n+k}{n+1}_{q}
\end{equation}
for every $k \geq 1$, and therefore
\begin{align*}
0 & = \binom{2n+1}{n}_{q} = \binom{2n+1}{n+1}_{q} \stackrel{\eqref{eq:Pmagic}}{=} \binom{2n}{n+1}_{q} \stackrel{\eqref{eq:Pmagic}}{=} \cdots \stackrel{\eqref{eq:Pmagic}}{=} \binom{2n-(n-1)}{n+1}_{q} = \binom{n+1}{n+1}_{q} = 1.
\end{align*}
leading to a contradiction. We conclude that $f(x^{-n}y^n) = 0$ for every $n \in \mathbb{N}$, whence $f = 0$.
\end{invisible}%
\end{proof}


\begin{remark}
The interested reader may verify that the algebra structure on $B \oslash B$ obtained by transport of structure from $\Bbbk_q[x,x^{-1},y]$ is uniquely determined by
\[(x^my^s \oslash x^ny^t) \cdot (x^py^u \oslash x^ry^v) = q^{(n+t)(p+u) - np}(x^my^sx^py^u \oslash x^ry^vx^ny^t)\]
for all $m,n,p,r,s,t,u,v \in \mathbb{N}$.
\begin{invisible}
We include here the proof:
    \begin{eqnarray*}
&&\left( x^{m}y^{s}\oslash x^{n}\underleftrightarrow{y^{t}}\right) \left(
x^{p}y^{u}\oslash x^{r}\underleftrightarrow{y^{v}}\right)  \\
&=&\left( -1\right) ^{t}q^{\frac{t\left( t+1\right) }{2}}\left(
x^{m}y^{s+t}\oslash x^{n+t}\right) \left( x^{p}y^{u+v}\oslash x^{r+v}\right)
\left( -1\right) ^{v}q^{\frac{v\left( v+1\right) }{2}} \\
&=&\left( -1\right) ^{t}q^{\frac{t\left( t+1\right) }{2}}\gamma \left(
x^{m}y^{s+t}x^{-n-t}x^{p}y^{u+v}x^{-r-v}\right) \left( -1\right) ^{v}q^{%
\frac{v\left( v+1\right) }{2}} \\
&=&\left( -1\right) ^{t}q^{\frac{t\left( t+1\right) }{2}}q^{\left(
n+t\right) \left( u+v\right) }\gamma \left(
x^{m}y^{s+t}x^{p}y^{u+v}x^{-n-t}x^{-r-v}\right) \left( -1\right) ^{v}q^{%
\frac{v\left( v+1\right) }{2}} \\
&=&\left( -1\right) ^{t}q^{\frac{t\left( t+1\right) }{2}}q^{\left(
n+t\right) \left( u+v\right) }\left( x^{m}y^{s+t}x^{p}y^{u+v}\oslash
x^{n+t}x^{r+v}\right) \left( -1\right) ^{v}q^{\frac{v\left( v+1\right) }{2}}
\\
&=&\left( -1\right) ^{t}q^{\frac{t\left( t+1\right) }{2}}q^{\left(
n+t\right) \left( u+v\right) }\left( x^{m}y^{s}\underleftrightarrow{%
y^{t}x^{p}}y^{u}\oslash x^{n}x^{r}\underleftrightarrow{x^{t}y^{v}}\right)  \\
&=&\left( -1\right) ^{t}q^{\frac{t\left( t+1\right) }{2}}q^{\left(
n+t\right) \left( u+v\right) +tp-tv}\left( x^{m}y^{s}x^{p}y^{u}y^{t}\oslash
x^{n+r}y^{v}x^{t}\right)  \\
&=&q^{\left( n+t\right) \left( u+v\right) +tp-tv}\left(
x^{m}y^{s}x^{p}y^{u}\oslash x^{n+r}y^{v}y^{t}\right)  \\
&=&q^{\left( n+t\right) \left( u+v\right) +tp-tv-nv}\left(
x^{m}y^{s}x^{p}y^{u}\oslash x^{r}y^{v}x^{n}y^{t}\right)  \\
&=&q^{nu+tu+tp}\left( x^{m}y^{s}x^{p}y^{u}\oslash
x^{r}y^{v}x^{n}y^{t}\right)
\end{eqnarray*}%
\end{invisible}
If we define
\[\sigma(x^my^s \otimes x^ny^t) \coloneqq q^{(m+s)(n+t)-mn}(x^ny^t \otimes x^my^s),\]
then $(B,\sigma)$ is a braided vector space and we can define a twisted algebra structure on $B \otimes B^\op$ by means of $\sigma$:
\[(x^my^s \otimes x^ny^t)\cdot(x^py^u \otimes x^ry^v) \coloneqq q^{(n+t)(p+u)-np}(x^my^sx^py^u \otimes x^ry^vx^ny^t)\]
extended by $\Bbbk$-linearity. The algebra $B \oslash B$ is a quotient algebra of this and not of $B\otimes B^\op$.
\end{remark}

\begin{remark}
    Also the quantum plane is neither a left nor a right Hopf algebra as otherwise $x$ would be invertible in $\Bbbk _{q}[ x,y] $, which is not the case. Since we have noticed that the map $i_B$ is injective, we are again in the setting of \cref{rem:iKXnotepi}. Hence $i_B$ is not surjective.
\end{remark}

\begin{invisible}
\ps{[Now the following is not valid any more]}
\begin{remark}
    It is well-known for a Hopf algebra $H$, that $\int_{l} H^* \neq 0$ implies $\int_{l} H^* $ one-dimensional, see e.g. \cite[Theorem 5.4.2]{DNC}. \cref{thm:intqplane} shows that a similar property needs not to be true for $\int_{l} \pred{B} $ in case $B$ is just a bialgebra.
\end{remark}
\end{invisible}

\subsection{A finite-dimensional example}\label{ssec:fdimexample}
The next example we are going to investigate is given by a finite-dimensional bialgebra $B$ with $i_B$ surjective but not injective.

To this aim we turn back to the quantum plane $\Bbbk _{q}\left[ x,y\right] ,$ with $yx=qxy$ and coalgebra
structure given by $\Delta \left( x\right) =x\otimes x,\Delta \left(
y\right) =x\otimes y+y\otimes 1.$
Now  $I=\left\langle x^{3}-x,y^{2}\right\rangle $ is a bi-ideal in case $q=-1$.

\begin{invisible}
Note that $\Delta \left( y^{2}\right) =\left( x\otimes y+y\otimes 1\right)
\left( x\otimes y+y\otimes 1\right) =x^{2}\otimes y^{2}+xy\otimes
y+yx\otimes y+y^{2}\otimes 1$

$=x^{2}\otimes y^{2}+xy\otimes y+qxy\otimes y+y^{2}\otimes 1=x^{2}\otimes
y^{2}+\left( q+1\right) xy\otimes y+y^{2}\otimes 1.$

If $q=-1$ this element belongs to $I\otimes A+A\otimes I.$ Moreover $\Delta
\left( x^{3}-x\right) =x^{3}\otimes x^{3}-x\otimes x=\left( x^{3}-x\right)
\otimes x^{3}+x\otimes \left( x^{3}-x\right) \in I\otimes A+A\otimes I.$

Moreover $\varepsilon \left( y^{2}\right) =0=\varepsilon \left(
x^{3}-x\right) .$
\end{invisible}

The example we want to study is the quotient bialgebra $\Bbbk _{-1}\left[ x,y\right]/I$ i.e.
\begin{equation*}
B=\Bbbk \left\langle x,y\mid yx=-xy,x^{3}=x,y^{2}=0\right\rangle
\end{equation*}%
which is $6$-dimensional with basis $\left\{ 1,x,x^{2},y,xy,x^{2}y\right\} .$ In \cite[Proposition 4.1 and Remark 4.2]{ArMeSa} we showed that $\mathrm{H}(B) = H_{4}=\Bbbk \left\langle x,y\mid yx=-xy,x^{2}=1,y^{2}=0\right\rangle$, the Sweedler's $4$-dimensional Hopf algebra, and that $B \oslash B \cong H_4$ via the coalgebra isomorphism $\widehat{\pi } \colon B\oslash B \rightarrow H_{4}$ induced by the bialgebra map $B\rightarrow {B}/{\left\langle x^{2}-1\right\rangle }\cong H_{4}$, $x^{m}y^{n}\mapsto x^{m}y^{n}$.\medskip

In the following, $\left[ z\right] _{2}$ denotes  the congruence class of $z\in \mathbb{Z}$ modulo $2.$


\begin{proposition}
\label{prop:quotquant}
For $B=\Bbbk \left\langle x,y\mid
yx=-xy,x^{3}=x,y^{2}=0\right\rangle $ we have
$\int_{l} \pred{B} = \int_{l}\left(
B\oslash B\right) ^{\ast } =\Bbbk \lambda $ where the $\lambda$ is defined by $\lambda \left( x^{m}y^{n}\oslash x^{s}y^{t}\right) \coloneqq \left( -1\right) ^{s+t}\delta _{n+t,1}\delta _{\left[m+s+t\right] _{2},\left[ 1\right] _{2}}$ for $0\leq m,s\leq 2$ and $0\leq n,t\leq 1.$
\end{proposition}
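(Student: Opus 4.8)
The plan is to invoke \cref{coro:iBcopsu} and then make the resulting generator of $\int_l\pred{B}$ completely explicit. Since $B$ is $6$-dimensional it is, in particular, left Artinian, so \cref{coro:iBcopsu} applies and yields $\int_l\pred{B} = \int_l(B\oslash B)^* = \Bbbk\lambda$ with $\lambda = \tau\circ\widehat{q_B}$, where $q_B\colon B\to\mathrm{H}(B)$ is the canonical projection and $\tau$ is a non-zero left integral on $\mathrm{H}(B)$ (which exists, and is unique up to scalar, because $\mathrm{H}(B)$ is finite-dimensional). By \cite[Proposition 4.1 and Remark 4.2]{ArMeSa} we have $\mathrm{H}(B) = H_4$, the map $q_B$ is the bialgebra projection $\pi\colon B\to B/\langle x^2-1\rangle\cong H_4$, $x^my^n\mapsto x^my^n$, and $\widehat{q_B} = \widehat{\pi}\colon B\oslash B\to H_4$, $x^my^n\oslash x^sy^t\mapsto x^my^n\,S(x^sy^t)$, is an isomorphism, where $S$ denotes the antipode of $H_4$. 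Everything thus reduces to computing $\tau$ on $H_4$ and pulling it back along $\widehat{\pi}$.

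First I would pin down $\tau$. Writing a generic element of $H_4$ in the basis $\{1,x,y,xy\}$ (recall $x^2 = 1$ in $H_4$) and imposing $h_1\tau(h_2) = 1\tau(h)$ for $h\in\{1,x,y,xy\}$, using $\Delta(x) = x\otimes x$, $\Delta(y) = x\otimes y + y\otimes 1$ and $\Delta(xy) = 1\otimes xy + xy\otimes x$, forces $\tau(1) = \tau(x) = \tau(y) = 0$ while $\tau(xy)$ is free; normalising $\tau(xy) = 1$ gives $\tau(x^ay^b) = \delta_{[a]_2,[1]_2}\,\delta_{b,1}$ for $b\in\{0,1\}$ (and $\tau(x^ay^b) = 0$ for $b\geq 2$).

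Next I would compute $\widehat{\pi}$. From $S(x) = x$ and $S(y) = -xy$ in $H_4$, anti-multiplicativity of $S$ together with $yx = -xy$ gives $S(x^sy^t) = (-1)^{(s+1)t}x^{s+t}y^t$ for $0\leq t\leq 1$; then, using $yx = -xy$ and $y^2 = 0$ once more,
\[\widehat{\pi}(x^my^n\oslash x^sy^t) = x^my^n\,S(x^sy^t) = (-1)^{(s+1)t + n(s+t)}\,x^{m+s+t}y^{n+t},\]
which vanishes as soon as $n+t\geq 2$. Applying $\tau$ then yields
\[\lambda(x^my^n\oslash x^sy^t) = (-1)^{(s+1)t + n(s+t)}\,\delta_{[m+s+t]_2,[1]_2}\,\delta_{n+t,1}.\]
Finally, when $\delta_{n+t,1}\neq 0$ we have $(n,t)\in\{(1,0),(0,1)\}$, and a case check shows $(s+1)t + n(s+t)\equiv s+t\pmod 2$ in both cases; hence the sign equals $(-1)^{s+t}$ and we recover exactly the claimed expression for $\lambda$. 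As a sanity check, $\lambda(1\oslash 1) = \tau(1) = 0$, so $\lambda$ is not total, consistently with the statement.

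The computations are routine; there is no real obstacle beyond careful bookkeeping of the signs coming from the $q=-1$ commutation rule and from the antipode of $H_4$. The one conceptual point worth a word is that the formula in the statement prescribes $\lambda$ on the $36$ generators $x^my^n\oslash x^sy^t$ of the $4$-dimensional space $B\oslash B$ rather than on a basis, so a priori one might worry about consistency; but this is automatic, since $\lambda$ is genuinely defined as the linear functional $\tau\circ\widehat{\pi}$ and the displayed equalities merely record its values on those generators.
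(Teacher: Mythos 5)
Your proposal is correct and follows essentially the same route as the paper: apply \cref{coro:iBcopsu} (via \cref{lem:intBoslBHopf}) to reduce to the unique integral $\tau$ on $\mathrm{H}(B)=H_4$, and then compute $\lambda=\tau\circ\widehat{\pi}$ explicitly using $\widehat{\pi}(x^my^n\oslash x^sy^t)=x^my^nS_{H_4}(x^sy^t)$ and the vanishing $y^2=0$. The only cosmetic differences are that you re-derive the well-known integral $\tau(x^ay^b)=\delta_{a,1}\delta_{b,1}$ on $H_4$ instead of quoting it, and your sign exponent $(s+1)t+n(s+t)$ agrees with the paper's $\tfrac{t(t+1)}{2}+tn+s(n+t)$ and reduces to $s+t$ when $n+t=1$ exactly as in the paper.
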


\begin{proof}
It is known that $\int_{l} H_4^{\ast } =\Bbbk \tau$ where $\tau \colon H_4\rightarrow \Bbbk $ is the left integral defined by $\tau\left( x^{m}y^{n}\right) =\delta _{m,1}\delta_{n,1}$ for $0\leq m,n\leq 1$.
From \cref{lem:intBoslBHopf} and \cref{coro:iBcopsu}, we know that $\int_l \pred{B} = \int_l(B \oslash B)^\ast = \Bbbk \lambda$ where $\lambda \coloneqq \tau \circ \widehat{\pi}$.
%
It remains to prove that $\lambda$ is as in the statement. We compute
\[\lambda(x^my^n\oslash x^sy^t) = \tau(x^my^nS_{H_4}(x^sy^t))=\tau(x^my^n(yx)^tx^s) = (-1)^{\frac{t(t+1)}{2}+tn + s(n+t)}\tau(x^{m+t+s}y^{n+t}).\]
Since $y^2=0$ and by definition of $\tau$, the last term is zero unless $n+t=1$ and in this case the exponent becomes $\frac{t(t+1)}{2}+tn + s(n+t) = \frac{t(t+1)}{2}+tn + s = t+tn+s = t+s$ as $n+t=1$ implies either $(n,t)=(0,1)$ or $(n,t)=(1,0)$ and in both cases we have $\frac{t(t+1)}{2}=t$ and $nt=0$. Hence we get
$\lambda(x^my^n\oslash x^sy^t)=(-1)^{s+t}\delta_{n+t,1}\tau(x^{m+s+t}y)=(-1)^{s+t}\delta_{n+t,1}\delta_{[m+s+t]_2,[1]_2}$.
\end{proof}

\subsection{Comparing old and new integrals}

Let $B$ be a bialgebra.
When $i_B \colon B \to B \oslash B$ is injective, we have the linear map
\(
\omega \colon \int_l\pred{B} \to \int_lB^*, f \mapsto f \circ i_B,
\)
from \cref{rem:oldandnew}. In some cases, this becomes a bijection even if $B$ is not a Hopf algebra, as the following examples show.

\begin{example}
\label{ex:intdif}
Let $B\coloneqq \Bbbk _{q}\left[ x,y\right] $.
The space $\int_{l} B^* = \{\lambda \in B^*\mid b_1\lambda(b_2)=1\lambda(b)\text{ for all }b\in B\} $ of ordinary integrals in $B^*$ is always zero. In fact, by computing $b_1\lambda(b_2)=1\lambda(b)$ for a generator $b=x^my^n$ and by using \eqref{form:deltaq}, we get that
\[1\lambda \left( x^{m}y^{n}\right) =\sum_{i=0}^{n}\binom{n}{i}_{q}x^{m+i}y^{n-i}\lambda(x^{m}y^{i}).\]
Since $y$ does not appear on the left-hand side, we can deduce that $1\lambda \left( x^{m}y^{n}\right) = x^{m+n}\lambda(x^{m}y^{n})$ for all $m,n \in \mathbb{N}$ and hence $\lambda(x^my^n)=0$ whenever $m+n>0$. However, $1 \lambda(y) = x\lambda(y) + y \lambda(1)$ entails that also $\lambda(1) = 0$ and hence $\lambda = 0$. In this case, \eqref{eq:intcomparison} is an isomorphism, in view of \cref{thm:intqplane}.
\end{example}

\begin{example}
    Let $B \coloneqq \Bbbk[X]$ with $X$ group-like as in \cref{ssec:poly}. In this case, $\tau \colon \Bbbk[X] \to \Bbbk, X^n \mapsto \delta_{n,0},$ is an integral in $B^*$ and it coincides with $\lambda \circ i_B$, where $\lambda$ is the integral from \cref{prop:intkX}.
\end{example}

\begin{invisible}
\ps{
    More generally, let $\varphi \colon B \to H$ be an injective morphism of bialgebras, where $H$ is a Hopf algebra (which entails, in particular, that $i_B$ is injective, in view of the final comment of \cref{ssec:iB}). If $\tau \in \int_lH^*$, then $\tau \circ \hat \varphi \in \int_l\pred{B}$. Indeed,
    \[\varphi\left(x_1\tau\hat\varphi(x_2 \oslash y)\right) = \varphi(x)_1\tau\left(\varphi(x)_2S\varphi(y)\right) = \tau\left(\varphi(x)S(\varphi(y)_1)\right)\varphi(y)_2 = \varphi\left(\tau\hat\varphi(x \oslash y_1)y_2\right)\]
    and then we conclude by injectivity of $\varphi$ and \cref{pro:cosepiB}.
    This means that we have a chain of linear maps
    \[\int_lH^* \to \int_l\pred{B} \subseteq \int_l(B \oslash B)^* \to \int_l B^*.\]
    The composition of the last two is \eqref{eq:intcomparison}. The composition of the first two is the (co)restriction of $\hat\varphi^*$ to the spaces of integrals. In particular, if $\hat\varphi$ is an isomorphism, then $\int_lH^* \cong \int_l\pred{B} \cong \int_l(B \oslash B)^*$, and if moreover $B \oslash B \cong \mathrm{H}(B)$ via $\hat\eta_B$, then $\int_l\mathrm{H}(B)^* \cong \int_l\pred{B} \cong \int_l(B \oslash B)^*$.

    \medskip

    \textbf{Conjecture:} If $\mathrm{H}(B)$ is a localisation of $B$ obtained by inverting group-like elements, as when $B$ is commutative or for $B$ the quantum plane, then integrals in $B^*$ can be extended to integrals in $\mathrm{H}(B)^*$.

    \medskip

    The idea is that inverses of group-like elements are still group-like, and integrals on multiples of group-like elements are always $0$, unless the multiple is $g^{-1}$:



    \begin{lemma}
        Let $B$ be a bialgebra and $g \in B$ be a group-like element. If $\lambda \in \int_lB^*$, then $\lambda(gb) \neq 0$ if and only if $g^{-1}$, if it exists, appears with non-zero coefficient in $b$.
    \end{lemma}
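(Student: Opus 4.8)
The plan is to reduce everything to how $\lambda$ acts on group-like elements, where the behaviour is completely rigid. First I would record the key one-line fact: if $h\in B$ is group-like, then $\Delta(h)=h\otimes h$, so the integral identity $h_1\lambda(h_2)=1\lambda(h)$ of \cref{rem:oldandnew} becomes $(h-1)\lambda(h)=0$ in $B$, and since $B$ is a $\Bbbk$-vector space this forces $\lambda(h)=0$ for every group-like $h\neq1$. Thus, among group-likes, $\lambda$ is supported only on $1$. Second, if $g$ is group-like and invertible in $B$, then applying the algebra maps $\Delta$ and $\varepsilon$ to $gg^{-1}=1=g^{-1}g$ shows that $g^{-1}$ is again group-like; likewise $gh$ is group-like for every group-like $h$, with $gh=1$ precisely when $h=g^{-1}$.

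In the setting where the lemma is actually used---$B$ spanned by group-like elements, as happens for monoid bialgebras and their localisations obtained by inverting group-likes---every $b\in B$ is a finite combination $\sum_i c_i h_i$ of pairwise distinct group-likes, so $gb=\sum_i c_i(gh_i)$ is again a combination of pairwise distinct group-likes and the first step gives $\lambda(gb)=c_{g^{-1}}\lambda(1)$, where $c_{g^{-1}}$ is the coefficient of $g^{-1}$ in $b$ (with the convention $c_{g^{-1}}=0$ if $g$ is not invertible or $g^{-1}$ does not occur). Applying the first step to all of $B=\Bbbk M$, with $M$ the monoid of group-likes, also shows that $\int_lB^*=\Bbbk\,\delta_1$ is one-dimensional, so a non-zero integral automatically has $\lambda(1)\neq0$. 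Combining these, $\lambda(gb)\neq0$ if and only if $g^{-1}$ exists and appears with non-zero coefficient in $b$, as claimed.

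The point I expect to require the most care is delimiting the hypotheses rather than the (two short) computations. If $B$ is not spanned by group-likes the statement fails: for a non-zero integral $\lambda$ on Sweedler's $4$-dimensional Hopf algebra $H_4$ one has $\lambda(1)=0$, so $\lambda(x\cdot x)=\lambda(1)=0$ while $\lambda(x\cdot y)=\lambda(xy)\neq0$, and neither direction survives even though $x$ is group-like with $x^{-1}=x$. Consequently the lemma should be stated for $B$ spanned by its group-like elements (equivalently, a quotient of a monoid bialgebra), which is exactly the situation of the surrounding conjecture; under that hypothesis the proof is the two computations above together with routine bookkeeping over the group-like basis.
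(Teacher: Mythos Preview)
Your argument under the added hypothesis that $B$ is spanned by group-like elements is correct, and your $H_4$ counterexample is valid: it shows the lemma as literally stated fails for general bialgebras --- indeed, the phrase ``appears with non-zero coefficient in $b$'' is not even well-defined without a distinguished basis.

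The paper takes a different route. It applies the integral identity directly to $bg$: since $g$ is group-like, $\Delta(bg)=b_1g\otimes b_2g$, so writing $\Delta(b)=\sum_i x_i\otimes y_i$ in minimal rank (both families $\{x_i\}$ and $\{y_i\}$ linearly independent) gives $\sum_i x_ig\,\lambda(y_ig)=\lambda(bg)\cdot 1$, and from this the paper reads off that $\lambda(bg)\neq0$ forces some $x_ig$ to be a scalar multiple of $1$. This is more ambitious than your reduction but, as your counterexample already reveals, it does not close as written: the family $\{x_ig\}$ need not stay independent unless $g$ is right-regular; even then $1$ may lie in their span without being proportional to any single $x_ig$; the converse implication is not established (if $x_{i_0}g=k\cdot1$ one may still have $\lambda(y_{i_0}g)=0$, exactly as in your $b=x$ example); and the conclusion concerns the tensor factors of $\Delta(b)$ rather than coefficients of $b$ itself. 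Your restriction to the group-like-spanned case is therefore the honest hypothesis under which the stated equivalence holds, and your two-line proof there is the clean one.
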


    \begin{proof}
        Since $\lambda$ is an integral, we have
        \begin{equation}\label{eq:intgrplike}
            b_1g\lambda(b_2g) = 1\lambda(bg).
        \end{equation}
        Write $\Delta(b) = \sum_i x_i \otimes y_i$, where $\{x_i\}_i$ consists of linearly independent vectors and $\{y_i\}_i$, too. Then $\{x_ig\}_i$ consists of linearly independent vectors and $\{y_ig\}_i$, too. If now we rewrite \eqref{eq:intgrplike}, we find
        \[\sum_i x_ig\lambda(y_ig) = 1 \lambda(bg).\]
        Then $\lambda(bg) \neq 0$ if and only if $1$ appears in the right-hand side sum, if and only if there exists a (necessarily unique) $i$ such that $x_ig = k1$ for some non-zero $k \in \Bbbk$.
    \end{proof}

    So, if to obtain the Hopf envelope we invert certain non-invertible group-like elements, then we can extend any integral by putting it equal to zero on the multiples of the new elements and equal to itself on any other element.

    It follows then that in these cases $\int_lB^* \cong \int_l\mathrm{H}(B)^*$.
}
\end{invisible}

However, in general this is not the case and integrals in the sense we discussed them here are essentially different from integrals in the classical sense, as the following examples show.

\begin{example}\label{ex:findimcase}
    Let $B$ be as in \cref{ssec:fdimexample}.
    We claim that $\int_lB^* = 0$. Indeed, if $\lambda \in \int_lB^*$, then $\lambda(x) = 0 =\lambda(x^2)$ as $x$ and $x^2$ are non-zero group-like elements.
    Moreover
    \begin{align*}
        x\lambda(y) + y\lambda(1) = 1 \lambda(y) & \quad\Rightarrow\quad \lambda(1) = 0 = \lambda(y), \\
        x^2\lambda(xy) + xy\lambda(x) = 1 \lambda(xy) & \quad\Rightarrow\quad \lambda(xy) = 0, \\
        x\lambda(x^2y) + x^2y\lambda(x^2) = 1 \lambda(x^2y) & \quad\Rightarrow\quad  \lambda(x^2y) = 0.
    \end{align*}
    On the other hand, we saw previously that $\dim_\Bbbk\left(\int_l\pred{B}\right)=1$.
\end{example}

In particular, classical integrals do not satisfy the uniqueness theorem, as we already anticipated in the introduction and as the next example highlights.

\begin{example}\label{ex:Atimesk}
    Let $A$ be any algebra and let $B = A \times \Bbbk$ be the bialgebra studied in \cite[Example 4.20]{ArMeSa} with component-wise algebra structure and
    \[\Delta(a,k) = (1,1) \otimes (a,0) + (a,k) \otimes (0,1) \qquad \text{and} \qquad \varepsilon(a,k) = k\]
    for all $a\in A$ and $k \in \Bbbk$. It was proved therein that $\mathrm{H}(B) \cong \Bbbk$ with canonical map $\eta_B = \varepsilon$, and that $B \oslash B \cong \Bbbk$, too. Namely, $B^+ = \{(a,0) \mid a \in A\}$ and therefore
    \[(b,x) \oslash (ca,0) + (ba,0) \oslash (0,y)
    =((b,x) \oslash (c,y))\Delta(a,0) = 0\]
    in $B \oslash B$ for all $a,b,c \in A$, $x,y \in \Bbbk$, from which, by taking $c=1,y=0$ and  $c=0,a=y=1$ respectively, it follows that
    \[(b,x) \oslash (a,0) = 0 \qquad \text{and} \qquad (b,0) \oslash (0,1) = 0\]
    for all $a,b\in B$, $x \in \Bbbk$ and so
    \[(b,x) \oslash (c,y) = (b,x) \oslash ((c,0)+(0,y)) = (b,x) \oslash (0,y) = (0,x) \oslash (0,y) = xy(1,1) \oslash (1,1)\]
    for all $(b,x),(c,y) \in B$. In particular, we have that $\hat\eta_B = \varepsilon \oslash \varepsilon \colon B \oslash B \to \Bbbk, (b,x) \oslash (c,y) \mapsto xy,$ from \eqref{eq:fhat} is an isomorphism and therefore $\int_l\pred{B} \cong \int_l(B \oslash B)^* \cong \Bbbk$ by \cref{lem:intBoslBHopf}.

    On the contrary, we claim that $\int_lB^* \cong A^*$. Indeed, pick $\lambda \in B^*$. Then
    $\lambda \in \int_lB^*$ if and only if
    \begin{equation}\label{eq:intAk}
        (1,1)\lambda(a,0) + (a,k)\lambda(0,1) = (1,1)\lambda(a,k)
    \end{equation}
    for all $a\in A,k\in\Bbbk$, if and only if $\lambda(0,k) = 0$ for all $k \in \Bbbk$: for the direct implication, take $a=0$ in \eqref{eq:intAk}; for the reverse implication, observe that \eqref{eq:intAk} is always satisfied if $\lambda(0,k) = 0$ for all $k \in \Bbbk$. Therefore, if we consider the canonical projection $p_1:A\times\Bbbk \to A$ and injection $i_1:A\to A\times\Bbbk$, then we get $\lambda\circ i_1\circ p_1=\lambda$ and we have well-defined linear maps
    \[
    \xymatrix @R=0pt{
    \int_lB^* \ar@<+0.5ex>[r]^{\Phi} & A^* \ar@<+0.5ex>[l]^{\Psi} \\
    \lambda \ar@{|->}[r] & \lambda \circ i_1 \\
    f\circ p_1 & f \ar@{|->}[l]
    }
    \]
     such that
    \[\Phi\Psi(f) = \Phi(f\circ p_1)=f\circ p_1\circ i_1 = f \qquad \text{and} \qquad \Psi\Phi(\lambda) = \Psi(\lambda\circ i_1) = \lambda\circ i_1\circ p_1 = \lambda.\]
    Hence, they are mutually inverse isomorphisms.
\end{example}

It would be interesting to find a bialgebra $B$ for which $\int_l \pred{B}$ does not coincide with $\int_l (B \oslash B)^*$.

\appendix

\section{Explicit form of the right adjoint}\label{ssec:RightAdj}



Even though we did not need  it to define integrals on arbitrary bialgebras, in this section we present an explicit description of the right adjoint $R$ to the functor $L$ described in \cref{prop:predBalg}, together with a more elementary and direct construction of the latter.

\subsection{Constructing the right adjoint}

We start by observing that the canonical coalgebra morphism $i_B \colon B \to B \oslash B$ induces an algebra morphism $\chi\coloneqq{i_B}^* \colon (B \oslash B)^* \to B^*$ which allows us to construct the composition of functors
\begin{equation}
\label{eq:leftadj}
\xymatrix{\mathfrak{M}^B\ar[r]&\prescript{}{B^*}{\mathfrak{M}}\ar[r]^-{\chi_*}& \prescript{}{(B \oslash B)^*}{\mathfrak{M}} \cong \mathfrak{M}_{\prescript{\star}{}{B}}},
\end{equation}
where $\chi_*$ denotes the \emph{restriction of scalars functor}. For the sake of simplicity, we denote by $\bla$ the $(B \oslash B)^*$-action on a left $B^*$-module obtained by restriction of scalars along $\chi$.
Note that $\chi_*$ has the right adjoint \[\chi^!:\prescript{}{(B \oslash B)^*}{\mathfrak{M}}\to \prescript{}{B^*}{\mathfrak{M}},\quad M\mapsto  \prescript{}{(B \oslash B)^*}{\mathfrak{M}}\left(B^*,M\right),\] i.e., the \emph{coinduction functor}. Here
the left $(B \oslash B)^*$-module structure on $B^*$ is induced by $\chi$ and $\prescript{}{(B \oslash B)^*}{\mathfrak{M}}\left(B^*,M\right)$ is a left $B^*$-module with respect to
\begin{equation}\label{eq:harpoon1}
(\varphi \rightharpoonup f)(\psi) \coloneqq f(\psi * \varphi)
\end{equation}
for all $\varphi,\psi \in B^*$, $f \in \prescript{}{(B \oslash B)^*}{\mathfrak{M}}\left(B^*,M\right)$.
Thus, the composition \eqref{eq:leftadj} admits the right adjoint
\[M \mapsto \chi^!(M)^\rat = \prescript{}{(B \oslash B)^*}{\mathfrak{M}}\left(B^*,M\right)^\rat.\]
We claim that these ideas can be mimicked to construct the right adjoint $R$.\medskip

First of all, recall from \cref{rmk:BotBact} that $B \oslash B$ is a left $B \otimes B^\cop$-module coalgebra. As a consequence, its linear dual $(B \oslash B)^*$ becomes a right $B \otimes B^\cop$-module with respect to
\begin{equation}\label{eq:triangles}
\left(\alpha \triangleleft b^\cop\right)(x \oslash y) \coloneqq \alpha(x \oslash by) \qquad \text{and} \qquad \left(\alpha \bra b\right)(x \oslash y) \coloneqq \alpha(bx \oslash y),
\end{equation}
for all $\alpha \in (B \oslash B)^*$, $b,x,y \in B$, which moreover satisfy
\[
\big(b_1 \rightharpoonup \chi(\alpha \triangleleft {b_2}^\cop)\big)(x) = \alpha(x b_1 \oslash b_2) = \alpha(x \oslash 1)\varepsilon(b) = \big(\varepsilon(b)\chi(\alpha)\big)(x)
\]
for all $x \in B$, i.e.
\begin{equation}\label{eq:alphactions}
b_1 \rightharpoonup \chi(\alpha \triangleleft {b_2}^\cop) = \varepsilon(b)\chi(\alpha)
\end{equation}
for all $\alpha \in (B \oslash B)^*$, $b \in B$. In addition, $(B \oslash B)^*$ becomes a right $B^\cop$-module algebra with respect to the ordinary convolution product:
\[(\alpha * \beta)(x \oslash y) = \alpha(x_1 \oslash y_2)\beta(x_2 \oslash y_1)\]
for all $\alpha,\beta \in (B \oslash B)^*$, $x,y \in B$.
In fact, as we know,
\begin{align*}
\left((\alpha * \beta)\triangleleft b^\cop\right)(x \oslash y) & \stackrel{\eqref{eq:triangles}}{=} (\alpha * \beta)(x \oslash by) = \alpha(x_1 \oslash b_2y_2)\beta(x_2 \oslash b_1y_1) \\
& \stackrel{\eqref{eq:triangles}}{=} \left((\alpha\triangleleft {b_2}^\cop) * (\beta\triangleleft {b_1}^\cop)\right)(x \oslash y) = \left((\alpha\triangleleft {b^\cop}_1) * (\beta\triangleleft {b^\cop}_2)\right)(x \oslash y)
\end{align*}
These remarks allow us to consider the category $\prescript{}{(B \oslash B)^*}{\left(\mathfrak{M}_{B^\cop}\right)}$, which we know is isomorphic to $\left(\mathfrak{M}_B\right)_{\pred{B}} \cong \mathfrak{M}_{B \ltimes \pred{B}}$ (see \cref{lem:MBA} and \cref{rmk:semi-direct}). \medskip

Secondly, recall the following construction from \cite[Proposition 2.2]{Tambara} (see also \cite[Theorem 7]{CMZ}).
Suppose that $A,B$ are algebras and that $\psi \colon B \otimes A \to A \otimes B, b \otimes a \mapsto a_\psi \otimes b^\psi$ satisfies the conditions of \cite[Proposition 2.2(ii)]{Tambara}, i.e.,
\begin{equation}\label{eq:tambara}
\begin{aligned}
 1_\psi \otimes b^\psi & = 1 \otimes b, & a_\psi \otimes 1^\psi & = a \otimes 1, \\
 (aa')_\psi \otimes b^\psi & = a_\psi a'_\Psi \otimes b^{\psi \Psi}, &  a_\psi \otimes (bb')^\psi & = a_{\Psi\psi} \otimes b^\psi {b'}^\Psi,
 \end{aligned}
 \end{equation}
 for all $a,a'\in A$, $b,b'\in B$. Then, we can consider the smash product $A \#_\psi B$, which is the vector space $A \otimes B$ together with the multiplication
 \[(a\#_\psi b)(a'\#_\psi b') = aa'_\psi \#_\psi b^\psi b'\]
 and the unit $1 \#_\psi 1$.

 \begin{remark}\label{rem:entwiningmods}
     By borrowing the notation from entwining structure theory, one can consider the category category ${}_{A}\mathfrak{M}{}_{B}(\psi)$ whose objects are left $A$-modules and right $B$ modules $M$ obeying, for every $a,\in A,b\in B,m\in M,$ the compatibility condition
    \begin{equation}
    \label{prentwinig}
    a(mc)=(a_\psi m) c^\psi.
    \end{equation}
    A morphism is just a left $A$-linear and right $B$-linear map and it can be easily verified that ${}_{A}\mathfrak{M}{}_{B}(\psi) \cong \mathfrak{M}_{A^\op \#_\psi B}$.
    Indeed, if $M$ is an object in ${}_{A}\mathfrak{M}_B(\psi)$, then we can define
    $m\cdot (a \#_\psi x) \coloneqq (am)x$.
    In this case, $m \cdot (1 \#_\psi 1) = m$ and
    \begin{align*}
    \left(m \cdot (a \#_\psi x)\right) \cdot (b \#_\psi y) & = (b((am)x))y = ((b_\psi(am))x^\psi)y = ((b_\psi a)m)(x^\psi y) \\
    & = m \cdot (b_\psi a \#_\psi x^\psi y) = m \cdot \big((a \#_\psi x)(b \#_\psi y)\big).
    \end{align*}
    Vice versa, if $N$ is an object in $\mathfrak{M}{}_{{A}^\op \#_\psi B}$, then we can define
    $(am)x \coloneqq m \cdot (a \#_\psi x)$
    and so
    \begin{align*}
    a(mx) = mx \cdot (a \#_\psi 1) = m \cdot (1 \#_\psi x) \cdot (a \#_\psi 1) = m \cdot (a_\psi \#_\psi x^\psi) = (a_\psi m)x^\psi.
    \end{align*}
    It is also clear that, with these definitions, a morphism in ${}_{A}\mathfrak{M}_B(\psi)$ is right $A^\op \#_\psi B$-linear, and vice versa.
 \end{remark}

 If $B$ is a bialgebra and $A$ is a left $B$-module algebra, then
 \begin{equation}
 \label{def:psi}
  \psi \colon B \otimes A^\op \to A^\op \otimes B,  \qquad x \otimes a \mapsto x_2 \cdot a \otimes x_1,
 \end{equation}
 satisfies conditions \eqref{eq:tambara} since
\begin{align*}
\psi(x \otimes 1) & = x_2 \cdot 1 \otimes x_1 = 1 \otimes x, \\
\psi(1 \otimes a) & = 1 \cdot a \otimes 1 = a \otimes 1, \\
\psi(x \otimes a \cdot^\op b) & = x_2 \cdot (ba) \otimes x_1 = (x_2 \cdot b)(x_3 \cdot a) \otimes x_1 = (x_3 \cdot a) \cdot^\op (x_2 \cdot b) \otimes x_1 \\
& = (x_2 \cdot a) \cdot^\op b_\Psi \otimes {x_1}^\Psi = a_\psi \cdot^\op b_\Psi \otimes {x^\psi}^\Psi, \\
\psi(xy \otimes a) & = (xy)_2 \cdot a \otimes (xy)_1 = x_2 \cdot (y_2 \cdot a) \otimes x_1y_1 = x_2 \cdot a_\Psi \otimes x_2y^\Psi = a_{\Psi\psi} \otimes x^\psi y^\Psi.
\end{align*}
In particular, for $A = B^*$ and
\begin{equation}
\label{def:psi2}
\psi \colon B \otimes {B^*}^\op \to {B^*}^\op \otimes B,\quad x \otimes \varphi \mapsto \left(x_2 \rightharpoonup \varphi\right) \otimes x_1,
\end{equation} we can consider the smash product ${B^*}^{\op} \#_\psi B$ and the categories ${}_{B^*}\mathfrak{M}{}_B(\psi) \cong \mathfrak{M}_{{B^*}^\op \#_\psi B}$.\medskip

Next aim is to show that the functors in \eqref{eq:leftadj} induce the following ones
\begin{equation}
\label{eq:leftadj2}
\xymatrix{
\mathfrak{M}^B_B \ar[r]^-{\hat{\mathcal{L}}} & {}_{B^*}\mathfrak{M}{}_B(\psi) \cong\mathfrak{M}_{{B^*}^\op \#_\psi B} \ar[r]^-{\tilde{\mathcal{L}}} & \mathfrak{M}_{B \ltimes \pred{B}}
}
\end{equation}
where the functors $\hat{\mathcal{L}}$ and $\tilde{\mathcal{L}}$ will be constructed in the following \cref{pro:Lhat} and \ref{pro:Ltilde} together with their adjoints.

\begin{proposition}
\label{pro:Lhat}
The usual functor $\mathfrak{M}^B\to{_{B^*}}\mathfrak{M}$ induces a functor \[\hat{\mathcal{L}}:\mathfrak{M}^B_B\to {}_{B^*}\mathfrak{M}_B(\psi)\] and the rational part functor $(-)^\rat:{}_{B^*}\mathfrak{M}\to \mathfrak{M}^B$ induces a functor
\[\hat{\mathcal{R}}\coloneqq(-)^\rat:{}_{B^*}\mathfrak{M}_B(\psi)\to \mathfrak{M}^B_B\]
such that $\hat{\mathcal{L}}\dashv\hat{\mathcal{R}}$.
\end{proposition}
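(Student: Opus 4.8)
The plan is to replay, for an arbitrary bialgebra $B$, the argument that proves \cref{pro:Rclassic}, with the category ${}_{H^*}\left(\mathfrak{M}_{H^\cop}\right)$ --- whose very definition relied on the antipode --- replaced by the entwined category ${}_{B^*}\mathfrak{M}_B(\psi)$ with $\psi$ as in \eqref{def:psi2}, which makes sense for any bialgebra. Accordingly the proof splits into three parts: well-definedness of $\hat{\mathcal{L}}$, well-definedness of $\hat{\mathcal{R}}=(-)^\rat$, and the construction of a natural isomorphism of hom-sets realising the adjunction $\hat{\mathcal{L}}\dashv\hat{\mathcal{R}}$.

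For $\hat{\mathcal{L}}$, I would start from a $B$-Hopf module $(M,\rho)\in\mathfrak{M}^B_B$, so that $\rho(mc)=m_0c_1\otimes m_1c_2$, and regard $M$ as a left $B^*$-module via $\varphi\cdot m\coloneqq m_0\varphi(m_1)$, exactly as in \cref{pro:Rclassic}, while keeping the original right $B$-action. The point is then that the compatibility condition \eqref{prentwinig} for the entwining \eqref{def:psi2}, which spelled out on elements reads $\varphi\cdot(mc)=\big((c_2\rightharpoonup\varphi)\cdot m\big)c_1$, holds by the one-line computation $\varphi\cdot(mc)=m_0c_1\varphi(m_1c_2)=m_0c_1(c_2\rightharpoonup\varphi)(m_1)=\big((c_2\rightharpoonup\varphi)\cdot m\big)c_1$. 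On morphisms nothing is needed: a morphism of $B$-Hopf modules is both right $B$-colinear, hence left $B^*$-linear, and right $B$-linear. Thus $\hat{\mathcal{L}}$, acting as the identity on underlying objects and maps, is a functor $\mathfrak{M}^B_B\to{}_{B^*}\mathfrak{M}_B(\psi)$.

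For $\hat{\mathcal{R}}$, given $N\in{}_{B^*}\mathfrak{M}_B(\psi)$ I would take its rational part $N^\rat$ (as a left $B^*$-module), which already is a right $B$-comodule with coaction $\rho$ uniquely determined by $\varphi\cdot n=n_0\varphi(n_1)$ on $N^\rat$. The key --- and essentially the only non-formal --- step is to show that $N^\rat$ is closed under the right $B$-action of $N$: for $n\in N^\rat$, $c\in B$ and $\varphi\in B^*$, condition \eqref{prentwinig} yields $\varphi\cdot(nc)=\big((c_2\rightharpoonup\varphi)\cdot n\big)c_1=n_0c_1\,\varphi(n_1c_2)$, and since $\rho(n)=\sum n_0\otimes n_1$ is a finite sum this shows at once that $nc\in N^\rat$ and that $\rho(nc)=n_0c_1\otimes n_1c_2$, so that $N^\rat$, endowed with the restricted $B$-action and this coaction, is a $B$-Hopf module. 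On morphisms $\hat{\mathcal{R}}=(-)^\rat$ works by restriction, since left $B^*$-linear maps send rational elements to rational elements and respect the residual structures. This $B$-stability of the rational part is the step I expect to be the main obstacle, in the sense of being the only one that genuinely uses the hypotheses; it is the exact analogue of the corresponding verification inside the proof of \cref{pro:Rclassic}, but routed through the entwining $\psi$ rather than through the antipode, and once \eqref{prentwinig} is written out it is short.

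Finally, for the adjunction I would exhibit mutually inverse natural bijections between $\mathfrak{M}^B_B\big(M,\hat{\mathcal{R}}(N)\big)$ and ${}_{B^*}\mathfrak{M}_B(\psi)\big(\hat{\mathcal{L}}(M),N\big)$: a morphism $f\colon M\to N^\rat$ of $B$-Hopf modules is sent to its composition with the canonical inclusion $N^\rat\hookrightarrow N$, which is left $B^*$-linear (as $f$ is $B$-colinear and the inclusion is $B^*$-linear) and right $B$-linear; conversely, a left $B^*$-linear and right $B$-linear map $g\colon M\to N$ automatically lands in $N^\rat$, since for every $m\in M$ the assignment $\varphi\mapsto\varphi\cdot g(m)=g(m_0)\varphi(m_1)$ is a finite sum, so $g$ corestricts to a map $M\to N^\rat$ which is $B$-colinear (being $B^*$-linear between rational modules) and $B$-linear. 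These two assignments are visibly inverse to one another and natural in $M$ and in $N$, whence $\hat{\mathcal{L}}\dashv\hat{\mathcal{R}}$; moreover $\hat{\mathcal{R}}\hat{\mathcal{L}}(M)=M^\rat=M$ for every $M$, so the unit of this adjunction is the identity and $\hat{\mathcal{L}}$ is, in addition, fully faithful.
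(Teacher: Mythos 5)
Your proposal is correct and follows essentially the same route as the paper: the same computation verifies the entwining compatibility \eqref{prentwinig} for a Hopf module, the same computation shows the rational part is a right $B$-submodule (giving $\hat{\mathcal{R}}$), and your hom-set bijection is just the unit/counit description of the lifted adjunction (identity unit, inclusion counit) that the paper uses. The added remark that $\hat{\mathcal{L}}$ is fully faithful is a correct bonus not stated in the proposition.
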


\begin{proof}
We first prove that the usual functor $\mathfrak{M}^B\to{_{B^*}}\mathfrak{M}$ induces a functor $\hat{\mathcal{L}}.$ Indeed given $M$ in $\mathfrak{M}^B_B$, for $\varphi\in B^*,m\in M,b\in B,$ we get
\[\varphi\cdot (m \cdot b) = (m \cdot b)_0\varphi((m \cdot b)_1) = m_0 \cdot b_1\varphi(m_1b_2) = m_0 \cdot b_1(b_2\rightharpoonup\varphi)(m_1)=((b_2\rightharpoonup\varphi)\cdot m) \cdot b_1\] so that $M$ belongs to $_{B^*}\mathfrak{M}_B(\psi)$. Given $f \colon M\to N$ in $\mathfrak{M}^B_B$, it is clear that $f$ is both left $B^*$-linear and right $B$-linear, whence a morphism in ${}_{B^*}\mathfrak{M}_B(\psi).$

Next, we show that the rational part functor $(-)^\rat:{}_{B^*}\mathfrak{M}\to \mathfrak{M}^B$ induces a functor $\hat{\mathcal{R}}$ as in the statement.
Indeed, given $M$ in ${}_{B^*}\mathfrak{M}_B(\psi)$, $m\in M^\rat,b\in B$, the equality
\begin{equation}\label{eq:Mratsubmod}
\varphi \cdot (m \cdot b) \stackrel{\eqref{prentwinig}}{=} ((b_2 \rightharpoonup \varphi)\cdot m) \cdot b_1 \stackrel{(m\in M^\rat)}{=} (m_0(b_2 \rightharpoonup \varphi)(m_1)) \cdot b_1 = m_0 \cdot b_1\varphi(m_1b_2)
\end{equation}
entails that $m \cdot b\in M^\rat$ and that $\rho(m \cdot b)=m_0 \cdot b_1\otimes m_1b_2$, guaranteeing that we get a right $B$-Hopf module. Given a morphism $f \colon M\to N$ in ${}_{B^*}\mathfrak{M}_B(\psi)$, it is in particular left $B^*$-linear so that we can consider the linear map $f^\rat \colon M^\rat\to N^\rat,\,m\mapsto f(m)$. For $m\in M^\rat,b\in B,$ we have $f^\rat(m \cdot b)=f(m \cdot b)=f(m) \cdot b=f^\rat(m) \cdot b$ so that $f^\rat$ is in $\mathfrak{M}^B_B$.

The counit of the starting adjunction $\mathfrak{M}^B \leftrightarrows {}_{B^*}{\mathfrak{M}}$ is the inclusion $M^\rat\to M$ for $M\in {}_{B^*}\mathfrak{M}$, which is obviously in ${}_{B^*}\mathfrak{M}_B(\psi)$ if so is $M$, because $M^\rat$ is also a right $B$-submodule of $M$: for every $m \in M^\rat$ and $b \in B$, \cref{eq:Mratsubmod} entails that $m \cdot b$ is still in $M^\rat$. The unit is $\id_N:N\to N^\rat$ for $N\in\mathfrak{M}^B$, and it is obviously in $\mathfrak{M}^B_B$ if so is $N$. As a consequence  $\hat{\mathcal{L}}\dashv\hat{\mathcal{R}}$.
\end{proof}

In order to construct $\tilde{\mathcal{L}}$, recall that $B \ltimes \pred{B}$ is an algebra with respect to
\[1 \ltimes (\varepsilon \oslash \varepsilon) \qquad \text{and} \qquad (a \ltimes f)(b \ltimes g) = ab_1 \ltimes (f \triangleleft b_2)*^\op g,\]
and that ${B^*}^\op \#_\psi B$ is an algebra with respect to
\[\varepsilon \#_\psi 1 \qquad \text{and} \qquad (\varphi \#_\psi a)(\phi \#_\psi b) = \left(\varphi *^\op (a_2 \rightharpoonup \phi)\right) \#_\psi a_1b.\]
Consider the linear map
\[\xi \colon B \ltimes \pred{B} \to {B^*}^\op \#_\psi B, \qquad b \ltimes f \mapsto \left(b_2 \rightharpoonup \chi(f)\right) \#_\psi b_1,\]
obtained as the composition
\[\xymatrix{B\otimes\pred{B}\ar[r]^{B\otimes\chi^\op}&B\otimes B^{*\op}\ar[r]^{\psi}&B^{*\op}\otimes B}.\]

\begin{proposition}
\label{pro:Ltilde}
The morphism $\xi \colon B \ltimes \pred{B} \to {B^*}^\op \#_\psi B$ is a morphism of algebras and so the restriction of scalars along $\xi$,
\[\tilde{\mathcal{L}} \coloneqq \left(
\xymatrix{
\mathfrak{M}_{{B^*}^\op \#_\psi B} \ar[r]^-{\xi_*} & \mathfrak{M}_{B \ltimes \pred{B}}
}\right),\]
admits the right adjoint $\tilde{\mathcal{R}} \coloneqq \xi^!= \mathfrak{M}_{B \ltimes \pred{B}}\left({B^*}^\op \#_\psi B, -\right)$, the coinduction functor.
\end{proposition}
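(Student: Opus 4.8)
The plan is as follows. The second assertion needs no computation at all: for \emph{any} morphism of unital algebras $\xi\colon R\to S$, the restriction of scalars functor $\xi_*\colon\mathfrak{M}_S\to\mathfrak{M}_R$ admits the coinduction functor $\mathfrak{M}_R(S,-)$ as a right adjoint, where $S$ is regarded as an $(S,R)$-bimodule via left multiplication and via $\xi$ on the right, and $\mathfrak{M}_R(S,M)$ is turned into a right $S$-module by means of the left $S$-action on $S$. Taking $R = B\ltimes\pred{B}$, $S = {B^*}^\op\#_\psi B$ and $\xi$ as in the statement, this gives $\tilde{\mathcal{R}} = \xi^! = \mathfrak{M}_{B\ltimes\pred{B}}\bigl({B^*}^\op\#_\psi B,-\bigr)$. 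Hence the only thing to prove is that $\xi$ is a morphism of algebras, which I would do by checking separately that $\xi$ preserves the unit and is multiplicative.

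Preservation of the unit is immediate: $1_{B\ltimes\pred{B}} = 1\ltimes(\varepsilon\oslash\varepsilon)$, and since $\chi(\varepsilon\oslash\varepsilon)(x) = (\varepsilon\oslash\varepsilon)(x\oslash 1) = \varepsilon(x)$ we have $\chi(\varepsilon\oslash\varepsilon) = \varepsilon$, whence $\xi\bigl(1\ltimes(\varepsilon\oslash\varepsilon)\bigr) = (1\rightharpoonup\varepsilon)\#_\psi 1 = \varepsilon\#_\psi 1 = 1_{{B^*}^\op\#_\psi B}$.

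Multiplicativity is the heart of the matter. I would expand both $\xi\bigl((a\ltimes f)(b\ltimes g)\bigr)$ and $\xi(a\ltimes f)\,\xi(b\ltimes g)$ in Sweedler notation and compare. Three ingredients are used: \textbf{(i)} $\chi = i_B^*$ is an algebra map for the convolution products — equivalently $\chi\colon\pred{B}\to{B^*}^\op$ is an algebra map — because $i_B$ is a coalgebra map; \textbf{(ii)} $(B^*,*)$ is a left $B$-module algebra via $\rightharpoonup$, so a $\rightharpoonup$-action distributes over a convolution along the coproduct of the acting element; and \textbf{(iii)} the identity $b_1\rightharpoonup\chi(\alpha\triangleleft b_2) = \varepsilon(b)\chi(\alpha)$ of \eqref{eq:alphactions}, which is exactly the shadow of the defining relation $xb_1\oslash b_2 = \varepsilon(b)(x\oslash 1)$ in $B\oslash B$. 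Unwinding the product of $B\ltimes\pred{B}$ and applying $\xi$, fact \textbf{(i)} rewrites the $\pred{B}$-factor $(f\triangleleft b_2)*^\op g$ as the convolution $\chi(g)*\chi(f\triangleleft b_2)$ in $B^*$; fact \textbf{(ii)} distributes the leftover $\rightharpoonup$-action over this convolution; and in the factor that then carries $\chi(f\triangleleft -)$, the Sweedler pieces of $b$ and $a$ line up so that, up to an $a$-component, it has the form $b_1\rightharpoonup\chi(f\triangleleft b_2)$, so fact \textbf{(iii)} collapses it to $\varepsilon(\cdot)\chi(f)$. Reindexing produces
\[
\xi\bigl((a\ltimes f)(b\ltimes g)\bigr) = \Bigl(\bigl(a_2b_2\rightharpoonup\chi(g)\bigr)*\bigl(a_3\rightharpoonup\chi(f)\bigr)\Bigr)\#_\psi a_1b_1 .
\]
On the other hand, computing $\xi(a\ltimes f)\,\xi(b\ltimes g)$ straight from the smash-product multiplication $(\varphi\#_\psi p)(\phi\#_\psi q) = \bigl(\varphi *^\op(p_2\rightharpoonup\phi)\bigr)\#_\psi p_1q$ and using $u*^\op v = v*u$ yields the very same expression, so $\xi$ is multiplicative.

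The one genuine difficulty is the bookkeeping of iterated coproducts: one must split $a$ into three and $b$ into (up to) four Sweedler factors, and the point that must be handled carefully is that the component of $b$ which ends up inside $\chi(f\triangleleft-)$ is paired with precisely the matching $\rightharpoonup$-action needed to activate \eqref{eq:alphactions}; once the indices are aligned the equality is forced. A more conceptual organisation of the same argument uses the factorisation $\xi = \psi\circ(B\otimes\chi^\op)$ recalled above the statement: then \textbf{(iii)} is exactly the assertion that $\chi$ intertwines the right $B$-action $\triangleleft$ on $\pred{B}$ with the left $B$-action $\rightharpoonup$ on $B^*$ up to the counit, which is precisely what makes $\psi$ transport the $B\ltimes\pred{B}$-multiplication to the smash-product multiplication of ${B^*}^\op\#_\psi B$.
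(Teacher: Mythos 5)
Your proposal is correct and follows essentially the same route as the paper: the adjunction is dispatched as the standard restriction/coinduction adjunction for an arbitrary algebra map, and multiplicativity of $\xi$ is verified by the same Sweedler computation, using that $\chi=i_B^*$ is an algebra map, the module-algebra property of $\rightharpoonup$, and \eqref{eq:alphactions} to collapse $a_3b_3\rightharpoonup\chi(f\triangleleft b_4)$ to $a_3\rightharpoonup\chi(f)$, arriving at the identical intermediate expression $\bigl(a_2b_2\rightharpoonup\chi(g)\bigr)*\bigl(a_3\rightharpoonup\chi(f)\bigr)\#_\psi a_1b_1$. The closing remark on the factorisation $\xi=\psi\circ(B\otimes\chi^\op)$ is a pleasant conceptual gloss but does not change the argument.
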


\begin{proof}
We verify that $\xi$ is a morphism of algebras. Clearly, $\xi(1 \ltimes (\varepsilon \oslash \varepsilon)) = \varepsilon \#_\psi 1$. Moreover,
\begin{align*}
    \xi(ab_1 & \ltimes (f \triangleleft b_2)*^\op g) = ((ab_1)_2 \rightharpoonup \chi((f \triangleleft b_2)*^\op g)) \#_\psi (ab_1)_1 = (a_2b_2 \rightharpoonup (\chi(g)*\chi(f \triangleleft b_3))) \#_\psi a_1b_1 \\
    & = (a_2b_2 \rightharpoonup \chi(g))*(a_3b_3 \rightharpoonup\chi(f \triangleleft b_4)) \#_\psi a_1b_1 \stackrel{\eqref{eq:alphactions}}{=} (a_2b_2 \rightharpoonup \chi(g))*(a_3 \rightharpoonup\chi(f )) \#_\psi a_1b_1 \\
    & = (a_3 \rightharpoonup\chi(f )) *^\op (a_2 \rightharpoonup (b_2 \rightharpoonup \chi(g))) \#_\psi a_1b_1 = (a_2 \rightharpoonup\chi(f ) \#_\psi a_1)(b_2 \rightharpoonup \chi(g) \#_\psi b_1) \\
    & = \xi(a \ltimes f)\xi(b \ltimes g)
\end{align*}
for all $a,b \in B$, $f,g \in \pred{B}$. Therefore, we can consider the functor given by the restriction of scalars along $\xi$
\[\xi_* \colon \mathfrak{M}_{{B^*}^\op \#_\psi B} \to \mathfrak{M}_{B \ltimes \pred{B}}.\]
It is well-known that it admits the right adjoint $\mathfrak{M}_{B \ltimes \pred{B}}\left({B^*}^\op \#_\psi B, -\right)$, as claimed.
\end{proof}

\begin{theorem}
\label{thm:Ltildehat}
The functor $L$ from \eqref{eq:functorL} coincides with the composition
\begin{equation}\label{eq:Lcomps}
\xymatrix{
\mathfrak{M}^B_B \ar[r]^-{\hat{\mathcal{L}}} & {}_{B^*}\mathfrak{M}{}_B(\psi) \cong \mathfrak{M}_{{B^*}^\op \#_\psi B} \ar[r]^-{\tilde{\mathcal{L}}} & \mathfrak{M}_{B \ltimes \pred{B}} \cong \left(\mathfrak{M}_B\right)_{\pred{B}}
}
\end{equation}
and, as such, it admits the right adjoint $R$
\begin{equation}\label{eq:Rcomps}
\xymatrix{
\left(\mathfrak{M}_B\right)_{\pred{B}} \cong \mathfrak{M}_{B \ltimes \pred{B}} \ar[r]^-{\tilde{\mathcal{R}}} & \mathfrak{M}_{{B^*}^\op \#_\psi B} \cong {}_{B^*}\mathfrak{M}{}_B(\psi) \ar[r]^-{\hat{\mathcal{R}}} & \mathfrak{M}^B_B
}
\end{equation}
In particular, the latter is given on objects by $R(M) = \mathfrak{M}_{B \ltimes \pred{B}}\left({B^*}^\op \#_\psi B, M\right)^\rat.$
\end{theorem}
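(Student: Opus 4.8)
The plan is to prove the two claims of \cref{thm:Ltildehat} by a direct unwinding of the functors already constructed, using uniqueness of adjoints to transfer the identification of $R$ from the composite $\hat{\mathcal{R}}\circ\tilde{\mathcal{R}}$ back to the statement. First I would establish the factorisation \eqref{eq:Lcomps}: the functor $L$ of \eqref{eq:functorL} sends a right $B$-Hopf module $(N,\rho)$ to $(N,\mu_\rho)$ with $n\triangleleft f = n_0 f(n_1\otimes 1)$, viewed in $(\mathfrak{M}_B)_{\pred{B}}$. On the other side, $\hat{\mathcal{L}}(N)$ is $N$ regarded as an object of ${}_{B^*}\mathfrak{M}{}_B(\psi)$ via the rational left $B^*$-action $\varphi\cdot n = n_0\varphi(n_1)$ together with the regular right $B$-action, and then $\tilde{\mathcal{L}}$ restricts scalars along $\xi$. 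Under the chain of isomorphisms ${}_{B^*}\mathfrak{M}{}_B(\psi)\cong\mathfrak{M}_{{B^*}^{\op}\#_\psi B}$ (\cref{rem:entwiningmods}), $(\mathfrak{M}_B)_{\pred{B}}\cong\mathfrak{M}_{B\ltimes\pred{B}}$ (\cref{rmk:semi-direct}) and $\mathfrak{M}_{B\ltimes\pred{B}}\xrightarrow{\xi_*}\mathfrak{M}_{B\ltimes\pred{B}}$… more precisely along $\xi_*$ between the two smash/semidirect algebras, one must check that the resulting right $B\ltimes\pred{B}$-action on $N$ is exactly $n\cdot(b\ltimes f) = (nb)\triangleleft f$. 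Tracing through the definition $\xi(b\ltimes f) = (b_2\rightharpoonup\chi(f))\#_\psi b_1$ and the actions $(am)x = m\cdot(a\#_\psi x)$ from \cref{rem:entwiningmods}, one gets $n\cdot(b\ltimes f) = (b_2\rightharpoonup\chi(f))\cdot(n b_1)$; expanding with rationality and $\chi(f)=f\circ i_B$ this equals $(nb_1)_0 f((nb_1)_1 b_2\otimes 1) = n_0 b_1 f(n_1 b_2\otimes 1)$, which by \eqref{eq:algstruct} and the $B$-linearity of the $\pred{B}$-action is precisely $\mu_\rho$ applied after the $B$-action, i.e.\ the action coming from $L$. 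This is the computational heart of the first claim and is routine but bookkeeping-heavy.

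Once \eqref{eq:Lcomps} is verified, the adjoint statement is almost formal. By \cref{pro:Lhat} we have $\hat{\mathcal{L}}\dashv\hat{\mathcal{R}}$ and by \cref{pro:Ltilde} we have $\tilde{\mathcal{L}}\dashv\tilde{\mathcal{R}}$, and the categorical isomorphisms in the middle are equivalences (indeed isomorphisms) of categories; hence $\tilde{\mathcal{L}}\circ\hat{\mathcal{L}}$ has right adjoint $\hat{\mathcal{R}}\circ\tilde{\mathcal{R}}$ as a composite of adjunctions. On the other hand, \cref{prop:predBalg} (via \cref{cor:Lbiclosed}) tells us that $L$ admits \emph{a} right adjoint $R$; by uniqueness of right adjoints up to natural isomorphism, $R\cong\hat{\mathcal{R}}\circ\tilde{\mathcal{R}}$. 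Since $\tilde{\mathcal{R}} = \xi^! = \mathfrak{M}_{B\ltimes\pred{B}}\bigl({B^*}^{\op}\#_\psi B,-\bigr)$ and $\hat{\mathcal{R}} = (-)^{\rat}$, the composite sends $M$ to $\mathfrak{M}_{B\ltimes\pred{B}}\bigl({B^*}^{\op}\#_\psi B,M\bigr)^{\rat}$, which is the displayed formula; here one should record that the intermediate identification $\mathfrak{M}_{{B^*}^{\op}\#_\psi B}\cong{}_{B^*}\mathfrak{M}{}_B(\psi)$ is used to make sense of taking the rational part of $\xi^!(M)$.

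The main obstacle I anticipate is the first step: carefully pinning down all the module structures through the three category isomorphisms so that the equality $\tilde{\mathcal{L}}\hat{\mathcal{L}} = L$ holds \emph{on the nose} (as the theorem asserts) and not merely up to natural isomorphism. In particular one must be consistent about the opposite-algebra conventions in $\pred{B} = {}^{\star}(B\oslash B) = (B\oslash B)^{*\op}$, about whether $\chi$ is an algebra map $(B\oslash B)^*\to B^*$ or its opposite that is used in forming $\xi$, and about the order of factors in the smash product ${B^*}^{\op}\#_\psi B$ versus the semidirect product $B\ltimes\pred{B}$. A clean way to organise this is to verify the identity of actions on a single generic element $n\cdot(b\ltimes f)$ as sketched above, compare with $\mu_\rho\bigl((nb)\otimes f\bigr)$, and then invoke that both functors act as the identity on underlying spaces and on morphisms, so that agreement on the structure maps forces $\tilde{\mathcal{L}}\hat{\mathcal{L}} = L$ exactly. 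With that in hand, the rest is the formal adjunction-composition argument above.
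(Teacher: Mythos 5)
Your proposal is correct and follows essentially the same route as the paper: verify that $\tilde{\mathcal{L}}\hat{\mathcal{L}}$ reproduces on the underlying space the $B\ltimes\pred{B}$-action $n\cdot(b\ltimes f)=n_0\cdot b_1\,f(n_1b_2\oslash 1)$, i.e.\ the original right $B$-action together with $\mu_\rho$, and then compose the adjunctions of \cref{pro:Lhat} and \cref{pro:Ltilde}. One bookkeeping caveat: the smash-product action of \cref{rem:entwiningmods} is $n\cdot(\varphi\#_\psi b)=(\varphi\cdot n)\cdot b$ (left action first), so the intermediate expression should read $\bigl((b_2\rightharpoonup\chi(f))\cdot n\bigr)\cdot b_1$ rather than $(b_2\rightharpoonup\chi(f))\cdot(nb_1)$ — taken literally, your order produces an extra Sweedler leg $f(n_1b_2b_3\oslash 1)$ — although the final formula you state is the correct one.
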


\begin{proof}
Let $M$ be a right $B$-Hopf module, with right $B$-action denoted by $\cdot$ and right $B$-coaction denoted by $\delta(m) = m_0 \otimes m_1$ for all $m \in M$. Then $\hat{\mathcal{L}}(M)$ is $M$ itself with the same right $B$-module structure and left $B^*$-module structure given by $\varphi \cdot m = m_0\varphi(m_1)$ for all $m \in M$, $\varphi \in B^*$. This is seen as an object in $\mathfrak{M}_{{B^*}^\op \#_\psi B}$ via
\[m \cdot (\varphi \#_\psi b) = \left(\varphi \cdot m\right) \cdot b = m_0\cdot b\varphi(m_1),\]
for all $m \in M$, $\varphi \in B^*$ and $b \in B$, as in \cref{rem:entwiningmods}. After further applying $\tilde{\mathcal{L}}$, we land on $M$ again with module structure
\[m \cdot (b \ltimes f) = \left((b_2 \rightharpoonup \chi(f))\cdot m\right)\cdot b_1\]
for all $m \in M$, $f \in \pred{B}$ and $b \in B$, which can be seen as an object in $\left(\mathfrak{M}_B\right)_{\pred{B}}$ with respect to the right $B$-module structure
\[\mu(m \otimes b) = m \cdot(b \ltimes (\varepsilon \oslash \varepsilon)) = m \cdot b,\]
for all $m \in M$ and $b \in B$, i.e.\ the original one, and right $\pred{B}$-module structure
\[\nu(m \otimes f) = m \cdot (1 \ltimes f) = \chi(f)\cdot m = m_0\chi(f)(m_1) = m_0f(m_1 \oslash 1) = m \triangleleft f\]
for all $m \in M$ and $f \in \pred{B}$, i.e. the action $\mu_\rho$ from \cref{prop:predBalg}. Thus, the composition \eqref{eq:Lcomps} acts as $L$ on objects and so, since both act as the identity on morphisms, it coincides with $L$.

By \cref{pro:Lhat} and \cref{pro:Ltilde}, the composition \eqref{eq:Rcomps} provides a right adjoint to $L$, sending any object $M$ in $\left(\mathfrak{M}_B\right)_{\pred{B}}$ to the right Hopf module $\mathfrak{M}_{B \ltimes \pred{B}}\left({B^*}^\op \#_\psi B, M\right)^\rat$ and acting as the identity on morphisms.
\end{proof}

\subsection{Recovering the integrals from the explicit right adjoint}

 For completeness, we now directly check that the coinvariants of $R(M)$ yield the integrals we introduced. This follows from \cref{prop:RratcoB} below and the definition of integrals. \medskip

 For $M$ in $(\mathfrak{M}_B)_{\pred{B}} \cong \mathfrak{M}_{B \ltimes \pred{B}}$, consider
 \[\mathcal{R}(M) \coloneqq \left\{ \alpha \in \mathfrak{M}_{\pred{B}}(B^*,M) \mid \alpha(\chi(f)_\psi * \varphi)\cdot b^\psi = \alpha(\varphi)\cdot (b \ltimes f) \text{ for all } f \in \pred{B},\varphi \in B^*, b\in B \right\},\]
 where $M$ is a right $\pred{B}$-module by restriction of scalars along $\pred{B} \to B \ltimes \pred{B}, f \mapsto 1 \ltimes f$, the right $B$-module structure $\cdot$ on $M$ is via $B \to B \ltimes \pred{B}, b \mapsto b \ltimes (\varepsilon \oslash \varepsilon)$, and $B^*$ is a right $\pred{B}$-module via $\chi$: $\varphi \cdot f = \chi(f) * \varphi$ for all $\varphi \in B^*$, $f \in \pred{B}$.

 \begin{lemma}
     We have an isomorphism
     \[\mathfrak{M}_{B \ltimes \pred{B}}\left({B^*}^\op \#_\psi B, M\right) \xlongrightarrow{\cong} \mathcal{R}(M), \qquad F \mapsto F(- \#_\psi 1).\]
 \end{lemma}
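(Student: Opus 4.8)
The claim is that the assignment $F \mapsto F(-\#_\psi 1)$, restricting a right $B \ltimes \pred{B}$-linear map $F \colon {B^*}^\op \#_\psi B \to M$ to the subspace ${B^*}^\op \otimes \Bbbk \cong B^*$, lands in $\mathcal{R}(M)$ and is bijective. The plan is to exhibit an explicit inverse $\alpha \mapsto \widetilde{\alpha}$ with $\widetilde{\alpha}(\varphi \#_\psi b) \coloneqq \alpha(\varphi) \cdot (b \ltimes (\varepsilon \oslash \varepsilon))$ and to check that both $F \mapsto F(-\#_\psi 1)$ and $\alpha \mapsto \widetilde{\alpha}$ are well-defined and mutually inverse. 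Throughout I would use that ${B^*}^\op \#_\psi B$ is generated, as an algebra, by the two subalgebras ${B^*}^\op \#_\psi 1$ and $\varepsilon \#_\psi B$ via $\varphi \#_\psi b = (\varphi \#_\psi 1)(\varepsilon \#_\psi b)$, together with the algebra morphism $\xi \colon B \ltimes \pred{B} \to {B^*}^\op \#_\psi B$ from \cref{pro:Ltilde}, which satisfies $\xi(b \ltimes (\varepsilon \oslash \varepsilon)) = \varepsilon \#_\psi b$ and $\xi(1 \ltimes f) = \chi(f) \#_\psi 1$.

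First I would show that $F \mapsto F(-\#_\psi 1)$ does take values in $\mathcal{R}(M)$. Given a right $B \ltimes \pred{B}$-linear $F$, set $\alpha \coloneqq F(-\#_\psi 1)$. Right $\pred{B}$-linearity of $\alpha$ in the sense required follows from $\alpha(\chi(f) * \varphi) = F(\chi(f) * \varphi \#_\psi 1) = F((\varphi \#_\psi 1)(\chi(f) \#_\psi 1)) = F((\varphi \#_\psi 1)\xi(1 \ltimes f)) = F(\varphi \#_\psi 1)\cdot(1 \ltimes f) = \alpha(\varphi) \cdot f$ (recall the product in ${B^*}^\op \#_\psi B$ places the second factor's $B^*$-part on the left after the twist, but with both $B$-parts equal to $1$ it is just $\chi(f) *^\op \varphi = \varphi * \chi(f)$; here I must be careful with the opposite convention and with the fact that $\psi$ acts trivially when the grouplike-degree is $1$, using the first line of \eqref{eq:tambara}). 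For the defining relation of $\mathcal{R}(M)$, I compute $\alpha(\chi(f)_\psi * \varphi) \cdot b^\psi$ — writing $\chi(f)_\psi \otimes b^\psi$ for $\psi(b \otimes \chi(f)) = (b_2 \rightharpoonup \chi(f)) \otimes b_1$ — as $F((b_2 \rightharpoonup \chi(f)) * \varphi \#_\psi 1) \cdot b_1$, and then reassemble this as $F\big((\varphi \#_\psi 1)(b_2 \rightharpoonup \chi(f) \#_\psi b_1)\big) = F\big((\varphi \#_\psi 1)\xi(b \ltimes f)\big) = F(\varphi \#_\psi 1) \cdot (b \ltimes f) = \alpha(\varphi) \cdot (b \ltimes f)$, where the penultimate equality uses exactly the formula $\xi(b \ltimes f) = (b_2 \rightharpoonup \chi(f)) \#_\psi b_1$. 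This is the computational heart of the "well-defined" half.

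Conversely, given $\alpha \in \mathcal{R}(M)$ I would define $\widetilde\alpha(\varphi \#_\psi b) = \alpha(\varphi) \cdot (b \ltimes (\varepsilon \oslash \varepsilon))$ (extended linearly; well-defined since $\{\varphi \#_\psi b\}$ spans ${B^*}^\op \#_\psi B$ as a tensor product) and check $\widetilde\alpha$ is right $B \ltimes \pred{B}$-linear, i.e.\ $\widetilde\alpha(w \cdot \xi(b \ltimes f)) = \widetilde\alpha(w) \cdot (b \ltimes f)$ for all $w = \varphi \#_\psi a$. Using the product formula in ${B^*}^\op \#_\psi B$, $(\varphi \#_\psi a)(b_2 \rightharpoonup \chi(f) \#_\psi b_1) = \big(\varphi *^\op (a_2 \rightharpoonup(b_3 \rightharpoonup \chi(f)))\big) \#_\psi a_1b_1 = \big((a_2b_3 \rightharpoonup \chi(f)) * \varphi\big) \#_\psi a_1b_1$, so $\widetilde\alpha$ of this is $\alpha\big((a_2b_3 \rightharpoonup \chi(f)) * \varphi\big) \cdot (a_1b_1 \ltimes (\varepsilon \oslash \varepsilon))$. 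Now I apply the defining relation of $\mathcal{R}(M)$ with $b \rightsquigarrow a_2 b_2$ and $f$ fixed, after rewriting $a_2 b_3 \rightharpoonup \chi(f) = (a_2 b_2)_2 \rightharpoonup \chi(f)$ via coassociativity and $\rightharpoonup$ being an action of $B$ through $\Delta$; the relation then converts $\alpha((a_2b_2)_2 \rightharpoonup \chi(f)) * \varphi) \cdot (a_2b_2)_1 \ldots$ into $\alpha(\varphi) \cdot (a_2b_2 \ltimes f)$ after accounting for the $B$-action factor, and one finishes with the semidirect-product multiplication $(a_1 b_1 \ltimes (\varepsilon \oslash \varepsilon))(\cdot) = $ (appropriate reassembly) to arrive at $\alpha(\varphi)\cdot((a \ltimes (\varepsilon\oslash\varepsilon))(b \ltimes f)) = \widetilde\alpha(\varphi \#_\psi a) \cdot (b \ltimes f)$. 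Finally, the two maps are mutually inverse: $F \mapsto F(-\#_\psi 1) \mapsto \big[\varphi \#_\psi b \mapsto F(\varphi \#_\psi 1)\cdot(b \ltimes (\varepsilon\oslash\varepsilon))\big] = \big[\varphi\#_\psi b \mapsto F((\varphi\#_\psi 1)(\varepsilon \#_\psi b))\big] = F$, and $\alpha \mapsto \widetilde\alpha \mapsto \widetilde\alpha(-\#_\psi 1) = \alpha$ directly.

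\textbf{Main obstacle.} The genuinely fiddly part is bookkeeping the three interlocking multiplications — the opposite convolution $*^\op$ inside ${B^*}^\op$, the twist $\psi(x \otimes \varphi) = (x_2 \rightharpoonup \varphi) \otimes x_1$ defining $\#_\psi$, and the semidirect product $\ltimes$ — and, crucially, invoking \eqref{eq:alphactions} ($b_1 \rightharpoonup \chi(\alpha \triangleleft b_2^\cop) = \varepsilon(b)\chi(\alpha)$) at the right moment so that the "extra" grouplike legs collapse; the $\mathcal{R}(M)$-defining identity is precisely engineered so that $\xi(b \ltimes f)$ can be slotted in. Once the correspondence $w = (\varphi\#_\psi 1)(\varepsilon \#_\psi b)$ and $\xi(b \ltimes f) = (b_2 \rightharpoonup \chi(f))\#_\psi b_1$ are used systematically, every step is forced; no conceptual difficulty remains, only the risk of a sign- or side-convention slip.
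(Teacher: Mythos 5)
Your proposal is correct and follows essentially the same route as the paper: both directions hinge on the decomposition $\varphi \#_\psi b = (\varphi \#_\psi 1)(\varepsilon \#_\psi b)$, the formula $\xi(b \ltimes f) = (b_2 \rightharpoonup \chi(f)) \#_\psi b_1$, and the observation that $(a_2b_2 \rightharpoonup \chi(f)) \otimes a_1b_1 = \chi(f)_\psi \otimes (ab)^\psi$ so that the defining relation of $\mathcal{R}(M)$ applies to the product $ab$; your inverse $\widetilde\alpha(\varphi \#_\psi b) = \alpha(\varphi)\cdot(b \ltimes (\varepsilon \oslash \varepsilon))$ is exactly the paper's $F_\alpha$. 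The only cosmetic slips are the stray leg $b_3$ (only two legs of $b$ occur) and the phrasing ``substitute $a_2b_2$ for $b$'' where you mean ``substitute the product $ab$ for $b$''; also \eqref{eq:alphactions} is needed to prove $\xi$ is an algebra map, not in this lemma itself.
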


 \begin{proof}
    Notice that for every $F \in \mathfrak{M}_{B \ltimes \pred{B}}\left({B^*}^\op \#_\psi B, M\right)$ we have
    \[F(\varphi \#_\psi b) = F((\varphi \#_\psi 1)(\varepsilon \#_\psi b)) = F((\varphi \#_\psi 1)\xi(b \ltimes \varepsilon \oslash \varepsilon)) = F(\varphi \#_\psi 1)\left(b \ltimes (\varepsilon \oslash \varepsilon)\right).\]
    Thus, we have an injection $\mathfrak{M}_{B \ltimes \pred{B}}\left({B^*}^\op \#_\psi B, M\right)\to \mathfrak{M}_{ \pred{B}}\left(B^*, M\right),\,F\mapsto F(- \#_\psi 1)$. In one direction, if $F \in \mathfrak{M}_{B \ltimes \pred{B}}\left({B^*}^\op \#_\psi B, M\right)$, then
    \begin{align*}
        F(\chi(f)_\psi * \varphi \#_\psi 1)\cdot b^\psi & = F(\varphi * ^\op \chi(f)_\psi \#_\psi 1)\cdot \left(b^\psi \ltimes (\varepsilon \oslash \varepsilon) \right) \\
        & = F\left((\varphi * ^\op \chi(f)_\psi \#_\psi 1)\xi \left(b^\psi \ltimes (\varepsilon \oslash \varepsilon) \right)\right) \\
        & = F\left((\varphi * ^\op \chi(f)_\psi \#_\psi 1) \left(\varepsilon \#_\psi b^\psi \right)\right) \\
        & = F\left(\varphi * ^\op \chi(f)_\psi \#_\psi b^\psi \right)  = F\left(\varphi * ^\op \left(b_2 \rightharpoonup \chi(f)\right) \#_\psi b_1 \right) \\
        & = F\left((\varphi \#_\psi 1)\xi(b\ltimes f)\right) = F\left(\varphi \#_\psi 1\right) \cdot (b\ltimes f)
    \end{align*}
    for all $b \in B$, $f \in \pred{B}$, $\varphi \in B^*$, so that the injection $\mathfrak{M}_{B \ltimes \pred{B}}\left({B^*}^\op \#_\psi B, M\right)\to \mathfrak{M}_{ \pred{B}}\left(B^*, M\right)$ induces an injection $\mathfrak{M}_{B \ltimes \pred{B}}\left({B^*}^\op \#_\psi B, M\right)\to \mathcal{R}(M)$. In the opposite direction, if $\alpha \in \mathcal{R}(M)$, then we can define $F_\alpha \colon {B^*}^\op \#_\psi B \to M, \varphi \#_\psi b \mapsto \alpha(\varphi)\cdot b$, which satisfies
    \begin{align*}
        F_\alpha((\varphi \#_\psi b)\xi(a \ltimes f)) & = F_\alpha((\varphi \#_\psi b)((a_2 \rightharpoonup \chi(f)) \#_\psi a_1)) = F_\alpha((\varphi *^\op (b_2 \rightharpoonup (a_2 \rightharpoonup \chi(f)))) \#_\psi b_1a_1) \\
        & = \alpha\left((b_2a_2 \rightharpoonup \chi(f))*\varphi\right)\cdot b_1a_1 = \alpha\left(\chi(f)_\psi*\varphi\right)\cdot (ba)^\psi \stackrel{(*)}{=} \alpha(\varphi) \cdot (ba \ltimes f) \\
        & = \alpha(\varphi) \cdot (b \ltimes (\varepsilon\oslash\varepsilon))(a \ltimes f) = (\alpha(\varphi) \cdot b) \cdot (a \ltimes f) = F_\alpha(\varphi \#_\psi b) \cdot (a \ltimes f)
    \end{align*}
    where in $(*)$ we used the fact that $\alpha \in \mathcal{R}(M)$. Therefore, we have an assignment $\mathcal{R}(M) \to \mathfrak{M}_{B \ltimes \pred{B}}\left({B^*}^\op \#_\psi B, M\right), \alpha \mapsto F_\alpha$, which provides an inverse for the injection above.
 \end{proof}

 \begin{remark}\label{rem:predBlinear}
     For every $\alpha \colon B^* \to M$ linear for which $\alpha(\chi(f)_\psi * \varphi)\cdot b^\psi = \alpha(\varphi)\cdot (b \ltimes f) \text{ for all } f \in \pred{B},\varphi \in B^*, b\in B$, we have
    \[\alpha(\chi(f)*\varphi) = \alpha(\chi(f)_\psi*\varphi)\cdot 1^\psi = f \cdot \alpha(\varphi),\]
    hence $\alpha$ is left $\pred{B}$-linear and so this condition can be omitted from the description of $\mathcal{R}(M)$.
 \end{remark}

\begin{proposition}\label{prop:RratcoB}
For $M$ in $(\mathfrak{M}_{B})_{\pred{B}} \cong \mathfrak{M}_{B \ltimes \pred{B}}$, the map ${\mathcal{R}(M)^\rat}^{\mathrm{co}B} \to M^{\pred{B}},\; \alpha \mapsto \alpha(\varepsilon),$ is a $\Bbbk$-linear isomorphism.
\end{proposition}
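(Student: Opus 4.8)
The plan is to work with the concrete realisation of the right adjoint provided by \cref{thm:Ltildehat}, namely $R(M)=\mathfrak{M}_{B \ltimes \pred{B}}\left({B^*}^\op \#_\psi B, M\right)^{\rat}$, together with the identification $\mathfrak{M}_{B \ltimes \pred{B}}\left({B^*}^\op \#_\psi B, M\right)\cong \mathcal{R}(M)$, $F\mapsto F(-\#_\psi 1)$, of the preceding lemma, so that $R(M)=\mathcal{R}(M)^{\rat}$ where the rational part and the $B$-comodule structure are the ones transported along this isomorphism from the left $B^*$-module structure on $\mathfrak{M}_{B \ltimes \pred{B}}\left({B^*}^\op \#_\psi B, M\right)$ coming from \cref{rem:entwiningmods}. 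First I would spell out this transported structure: since the $B^*$-action is $(\varphi\rightharpoonup F)(v)=F\big((\varphi\#_\psi 1)v\big)$ and $(\varphi\#_\psi 1)(\psi\#_\psi 1)=(\psi*\varphi)\#_\psi 1$ in ${B^*}^\op \#_\psi B$, one gets $(\varphi\rightharpoonup\alpha)(\psi)=\alpha(\psi*\varphi)$ for all $\varphi,\psi\in B^*$ (this also matches \eqref{eq:harpoon1}). This is a short bookkeeping computation.

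Next I would compute the coinvariants. Fixing a basis of $B$ containing $1_B$, the standard argument shows that an element $n$ of a rational left $B^*$-module $N$ lies in $N^{\mathrm{co}B}$ exactly when $\varphi\rightharpoonup n=\varphi(1_B)n$ for every $\varphi\in B^*$. Applied to $N=\mathcal{R}(M)^{\rat}$ with the formula above, $\alpha\in{\mathcal{R}(M)^{\rat}}^{\mathrm{co}B}$ if and only if $\alpha(\psi*\varphi)=\varphi(1_B)\alpha(\psi)$ for all $\psi,\varphi\in B^*$; evaluating at $\psi=\varepsilon$ forces $\alpha(\varphi)=\varphi(1_B)\alpha(\varepsilon)$ for all $\varphi$. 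Conversely, for any $m\in M$ the map $\alpha_m\colon B^*\to M$, $\varphi\mapsto\varphi(1_B)m$, satisfies $\varphi\rightharpoonup\alpha_m=\varphi(1_B)\alpha_m$, hence spans a one–dimensional trivial subcomodule, so it is rational and coinvariant whenever it belongs to $\mathcal{R}(M)$. Therefore $\alpha\mapsto\alpha(\varepsilon)$ is a $\Bbbk$-linear bijection from ${\mathcal{R}(M)^{\rat}}^{\mathrm{co}B}$ onto the subspace $\{m\in M\mid \alpha_m\in\mathcal{R}(M)\}$ of $M$: it is injective because a coinvariant $\alpha$ equals $\alpha_{\alpha(\varepsilon)}$, and surjective because $\alpha_m(\varepsilon)=m$.

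The only step of real substance is identifying $\{m\in M\mid \alpha_m\in\mathcal{R}(M)\}$ with $M^{\pred{B}}$, and this is where I would do the one honest calculation. For $m\in M$ I would unwind the defining condition of $\mathcal{R}(M)$ for $\alpha=\alpha_m$, using that $\psi$ sends $b\otimes\chi(f)$ to $(b_2\rightharpoonup\chi(f))\otimes b_1$, that $\chi(f)(b)=f(b\oslash 1)$, and that under $(\mathfrak{M}_B)_{\pred{B}}\cong\mathfrak{M}_{B\ltimes\pred{B}}$ from \cref{rmk:semi-direct} one has $m\cdot(b\ltimes f)=(m\cdot b)\cdot f$. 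The left-hand side $\alpha_m(\chi(f)_\psi*\varphi)\cdot b^\psi$ then collapses to $\varphi(1_B)\, m\cdot\big(b_1 f(b_2\oslash 1)\big)=\varphi(1_B)\, m\cdot(b\bra f)$ (where $\bra$ is the $\pred{B}$-action on the regular module $B$), while the right-hand side $\alpha_m(\varphi)\cdot(b\ltimes f)$ becomes $\varphi(1_B)\,(m\cdot b)\cdot f$. Since $\varphi(1_B)$ may be nonzero (take $\varphi=\varepsilon$), $\alpha_m\in\mathcal{R}(M)$ is equivalent to $(m\cdot b)\cdot f=m\cdot(b\bra f)$ for all $b\in B$, $f\in\pred{B}$, i.e.\ to $m\in M^{\pred{B}}$. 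Combining the three steps yields that $\alpha\mapsto\alpha(\varepsilon)$ is the asserted $\Bbbk$-linear isomorphism.

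The main obstacle is not conceptual but notational: one must keep the opposite/co-opposite conventions straight, since the entwining $\psi$ is defined on $B\otimes {B^*}^\op$, the smash product is ${B^*}^\op\#_\psi B$, and the $B\ltimes\pred{B}$-module structure is the one from \cref{rmk:semi-direct}. Getting the convolution products on the correct side (so that $\psi*\varphi$, rather than $\varphi*\psi$, appears in the $B^*$-action, and so that the factor $b_1f(b_2\oslash 1)$ comes out correctly) requires care, but once the transported $B^*$-module structure is in hand, both the characterisation of coinvariants and the unwinding of the $\mathcal{R}(M)$-condition are routine.
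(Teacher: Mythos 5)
Your proposal is correct and follows essentially the same route as the paper's own proof: transport the $B^*$-action to get $(\varphi\rightharpoonup\alpha)(\psi)=\alpha(\psi*\varphi)$, deduce from rationality and coinvariance that $\alpha(\varphi)=\varphi(1_B)\alpha(\varepsilon)$, and unwind the defining condition of $\mathcal{R}(M)$ on $\alpha_m$ to see it is equivalent to $m\in M^{\pred{B}}$. The only difference is organisational (you factor the bijection through the set $\{m\mid\alpha_m\in\mathcal{R}(M)\}$ before identifying it with $M^{\pred{B}}$), which does not change the substance of the argument.
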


\begin{proof}
Since the left $B^*$-module structure $\mu$ on $\mathcal{R}(M)$ comes from the fact that it is isomorphic to $\mathfrak{M}_{B \ltimes \pred{B}}\left({B^*}^\op \#_\psi B, M\right)$, which is a right ${B^*}^\op \#_\psi B$-module via $\leftharpoonup$, it is of the form
\[\mu(\varphi \otimes \alpha) = \left(F_\alpha \leftharpoonup (\varphi \#_\psi 1)\right)( - \#_\psi 1) = F_\alpha((\varphi \#_\psi 1)(- \#_\psi 1)) = F_\alpha(-*\varphi \#_\psi 1) = \alpha(-*\varphi),\]
i.e., $\mu(\varphi \otimes \alpha) = \varphi \rightharpoonup \alpha$ for all $\varphi \in B^*$ and $\alpha \in \mathcal{R}(M)$. Therefore, if $\alpha \in {\mathcal{R}(M)^\rat}^{\mathrm{co}B}$ then
\begin{equation}\label{eq:ratcoinv}
\alpha(\varphi) = \alpha(\varepsilon * \varphi) = (\varphi \rightharpoonup \alpha)(\varepsilon) \stackrel{(\alpha \in \mathcal{R}(M)^\rat)}{=} \alpha_0(\varepsilon)\varphi(\alpha_1) \stackrel{(\alpha \in {\mathcal{R}(M)^\rat}^{\mathrm{co}B})}{=} \alpha(\varepsilon)\varphi(1_B)
\end{equation}
for all $\varphi \in B^*$. Therefore, $\alpha$ is uniquely determined by $\alpha(\varepsilon) \in M$, which satisfies
\begin{align*}
(\alpha(\varepsilon) \cdot b) \cdot f & = \alpha(\varepsilon) \cdot (b \ltimes f) \stackrel{(*)}{=} \alpha\left((b_2 \rightharpoonup \chi(f))*\varepsilon\right)\cdot b_1 \stackrel{\eqref{eq:ratcoinv}}{=} \alpha\left(\varepsilon\right)\cdot b_1\left(b_2 \rightharpoonup \chi(f)\right)(1_B) \\
& = \alpha\left(\varepsilon\right)\cdot b_1f(b_2 \oslash 1_B) = \alpha\left(\varepsilon\right)\cdot \left(b \triangleleft f\right)
\end{align*}
where in $(*)$ we used the fact that $\alpha \in \mathcal{R}(M)$.
That is, $\alpha(\varepsilon) \in M^{\pred{B}}$.

Conversely, if $m \in M^{\prescript{\star}{}{B}}$, then $\alpha_m \colon B^* \to M, \varphi \mapsto m\varphi(1_B)$, satisfies for all $f \in \pred{B}$ and $b\in B,$
\begin{align*}
\alpha_m(\chi(f)_\psi * \varphi)\cdot b^\psi & = m\left((b_2 \rightharpoonup \chi(f))* \varphi\right)(1_B)\cdot b_1 = m \varphi(1_B) \cdot b_1f(b_2 \oslash 1_B) = m \cdot (b \triangleleft f) \varphi(1_B) \\
& \stackrel{(\star)}{=} ((m \cdot b) \cdot f) \varphi(1_B) = m\varphi(1_B) \cdot (b \ltimes f) = \alpha_m(\varphi)\cdot (b \ltimes f)
\end{align*}
where in $(\star)$ we used the fact that $m \in M^{\prescript{\star}{}{B}}$. This means that $\alpha_m\in \mathcal{R}(M)$ by definition and in view of \cref{rem:predBlinear}. Moreover
\[(\varphi \rightharpoonup \alpha_m)(\psi) = \alpha_m(\psi * \varphi) = m\psi(1)\varphi(1) = \alpha_m(\psi)\varphi(1)\]
for all $\varphi,\psi \in B^*$.
That is, $\varphi \rightharpoonup \alpha_m = \alpha_m \varphi(1_B)$ for every $\varphi \in B^*$ and hence $\alpha_m \in {\mathcal{R}(M)^\rat}^{\mathrm{co}B}$. It follows that ${\mathcal{R}(M)^\rat}^{\mathrm{co}B} \cong M^{\prescript{\star}{}{B}}$.
\end{proof}


\subsection{A word on Tambara's construction and the Heisenberg double}

Let $B$ be an algebra and $A$ be a left $B$-module algebra. Then $\psi_{B,A}:B\otimes A\to A\otimes B,\,b\otimes a\mapsto b_1\cdot a\otimes b_2$
satisfies the conditions in \eqref{eq:tambara} and we have ${}_{A^\op}\mathfrak{M}_B(\psi_{B,A})\cong \mathfrak{M}_{A \#_{\psi_{B,A}} B}$.
Since $A^\op$ becomes a left $B^\cop$-module algebra, we can also consider $\psi_{B^\op,A^\cop}:B^\cop\otimes A^\op\to A^\op\otimes B^\cop$. This is $\psi$ from \cref{def:psi}
and hence ${}_{A}\mathfrak{M}_{B^\cop}(\psi)\cong \mathfrak{M}_{A^\op \#_{\psi} B^\cop}$.
As a particular case, since we noticed that $B^*$ is a left $B$ module, we can consider $A=B^*$ obtaining $\psi$ of \eqref{def:psi2} and ${}_{B^*}\mathfrak{M}_{B^\cop}(\psi)\cong \mathfrak{M}_{B^{*\op} \#_{\psi} B^\cop}\cong {}
_{(B^{*\op} \#_{\psi} B^\cop)^\op}\mathfrak{M}$.
Note also that, from \cite[Proposition 22]{CMZ},
the algebra $(B^{*\op} \#_{\psi} B^\cop)^\op$ is isomorphic to $B^{\op,\cop} \#_{\tau\psi\tau} B^*$ where $\tau$ is the flip of vector spaces.
Our next  aim is to show that the latter algebra is related with the notion of Heisenberg double. Recall from \cite[\S 6.4]{CMZ} that, given a bialgebra $L$, its Heisenberg double $\mathcal{H}(L)$ is the smash product $L\# L^*$ with multiplication defined by
\[(x \,\#\, \varphi)(y \,\#\, \phi) = {x}_2 \cdot y \,\#\, \varphi * ({x}_1 \rightharpoonup \phi). \]
In particular, for $L=B^{\op,\cop}, x,y \in B$ and $\varphi,\phi \in B^*$ we obtain
\begin{align*}
(x \,\#\, \varphi)(y \,\#\, \phi) &= {x^\cop}_2 \cdot^\op y \,\#\, \varphi *^{\op,\cop} ({x^\cop}_1 \rightharpoonup \phi) = {x_1}^\cop\cdot^\op y \,\#\, \varphi *^{\op,\cop} ({x_2}^\cop \rightharpoonup \phi) \\
&=  y\cdot {x_1}^\cop \,\#\, ({x_2}^{\cop} \rightharpoonup \phi)*\varphi =(y \,\#_{\tau\psi\tau}\, \phi)\cdot (x \,\#_{\tau\psi\tau}\, \varphi) = (x \,\#_{\tau\psi\tau}\, \varphi)\cdot^\op (y \,\#_{\tau\psi\tau}\, \phi)
\end{align*}
so that $B^{*\op} \#_{\psi} B^\cop \cong (B^{\op,\cop} \#_{\tau\psi\tau} B^*)^{\op} = \mathcal{H}(B^{\op,\cop})$ and hence ${}_{B^*}\mathfrak{M}_{B^\cop}(\psi)\cong \mathfrak{M}{}_{\mathcal{H}(B^{\op,\cop})}$.
Note that ${}_{B^*}\mathfrak{M}_{B^\cop}(\psi)$ is just ${}_{B^*}\mathfrak{M}_{B}(\psi)$ as considered in \eqref{eq:leftadj2} as $B^\cop$ and $B$ have the same underlying algebra.

\bibliography{references}
\bibliographystyle{acm}

\end{document}